\newcommand{\Pic}{\rm Pic\,}
\newcommand{\proset}{\,\mathrel{\lower 4pt\hbox{$\scriptscriptstyle/$}
\mkern -14mu\subseteq }\,} 
 \newtheorem{theorem}{Theorem}[section]
  \newtheorem{corollary}[theorem]{Corollary}
 \newtheorem{lemma}[theorem]{Lemma}
 \newtheorem{proposition}[theorem]{Proposition}
\newtheorem{remark}[theorem]{Remark}
 \newtheorem{example}[theorem]{Example}
\numberwithin{equation}{section}
\def\ker{\operatorname{ker}}
\def\coker{\operatorname{coker}}
\def\Br{\operatorname{Br}}
\begin{document}
\date{\today}

\title{Relative Brauer Groups and \'etale cohomology}
 \author{Vivek Sadhu} 
 
 \address{Department of Mathematics, Indian Institute of Science Education and Research, Bhopal, Bhopal Bypass Road, Bhauri, Bhopal-462066, Madhya Pradesh, India}
 \email{ vsadhu@iiserb.ac.in, viveksadhu@gmail.com}
 \keywords{Relative Brauer groups, Subintegral map, Kummer's sequence}
 \thanks{Author was supported by SERB-DST MATRICS grant MTR/2018/000283}

\begin{abstract}
 In this article, we construct a natural group homomorphism 
 $$ \theta:  \Br(f)\to H^{1}_{et}(S, f_{*}\mathcal{O}_{X}^{\times}/\mathcal{O}_{S}^{\times})$$ for a faithful affine map $f: X \to S$ of noetherian schemes. Here $\Br(f)$ denotes the relative Brauer group of $f.$ We also prove $\Br(f)=0$ whenever $f: A \hookrightarrow B$ is a subintegral extension of noetherian $\mathbb{Q}$-algebras.
 Furthermore, we prove a relative version of Kummer's exact sequence. 
 \end{abstract}
\maketitle

\section{Introduction}
For a map $f: X \to S$ of schemes, we define the relative Brauer group $\Br(f)$ as the Grothendieck group of a certain relative category $Az(f^{*}),$ i.e., $\Br(f):= K_{0}(Az(f^{*}))$(see section 2). This $\Br(f)$ fits into a natural exact sequence 

$$\Pic({\it S}) \rightarrow \Pic({\it X}) \rightarrow \Br({\it f}) \rightarrow \Br({\it S}) \rightarrow \Br({\it X}).$$ Classically, the relative Brauer group is just the kernel of the natural map $\Br(S)\stackrel{f^{*}}\to \Br(X)$ and is denoted by $\Br(X/S)$. Note that the relative Brauer group $\Br(f)$ is different from the classical one $\Br(X/S)$. But in the case of field extension, i.e., if $f: L \hookrightarrow K$ is a field extension then $\Br(f)$ is isomorphic to $\Br(K|L).$ The details related to $\Br(K|L)$ can be found in  \cite{FS}, \cite{FS1}, \cite{FS2}. In general, we have a natural map $\Br(f) \to \Br(X/S)$ and, this map is an isomorphism if and only if $\Pic({\it S}) \to \Pic({\it X})$ is surjective. . 

Now assume that $f: X \to S$ is a faithful affine map of schemes, i.e., affine and the structure map $\mathcal{O}_{S}\rightarrow f_{*}\mathcal{O}_{X}$ is injective. We define 
$$ \Br^{'}(f):= H_{et}^{1}(S,   f_{*}\mathcal{O}_{X}^{\times}/\mathcal{O}_{S}^{\times}).$$ If $f: L \hookrightarrow K$ is a finite field extension then $\Br(f)\cong \Br^{'}(f)$ (see Lemma \ref{rel}). But $\Br(f)$ and $\Br^{'}(f)$ are different in general (see Example \ref{not iso in gen}). This is why we prefer to call the latter group as a relative cohomological Brauer group. One of the goals of this article is to relate $\Br(f)$ and $\Br^{'}(f)$. We prove the following (see Theorem \ref{natural map}):

\begin{theorem}\label{main th}
 Let $f: X\to S $ be a faithful affine map of noetherian schemes. Then there is a natural group homomorphism $\theta: \Br(f)\to \Br^{'}(f):= 
 H^{1}_{et}(S, f_{*}\mathcal{O}_{X}^{\times}/ \mathcal{O}_{S}^{\times}).$ \end{theorem}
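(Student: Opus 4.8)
The plan is to realize $\theta$ as an explicit connecting-type homomorphism coming from the short exact sequence of \'etale sheaves on $S$
\[ 1 \to \mathcal{O}_S^{\times} \to f_*\mathcal{O}_X^{\times} \to f_*\mathcal{O}_X^{\times}/\mathcal{O}_S^{\times} \to 1, \]
whose exactness on the left is exactly where faithfulness of $f$ is used: an injection $\mathcal{O}_S \hookrightarrow f_*\mathcal{O}_X$ of sheaves of rings induces an injection on the associated unit sheaves. Write $\mathcal{Q}:=f_*\mathcal{O}_X^{\times}/\mathcal{O}_S^{\times}$, so that $\Br^{'}(f)=H^1_{et}(S,\mathcal{Q})$. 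The guiding idea is that an element of $\Br(f)=K_0(Az(f^*))$ is represented by an Azumaya $\mathcal{O}_S$-algebra $\mathcal{A}$ together with a splitting $\phi\colon f^*\mathcal{A}\xrightarrow{\ \sim\ }\mathcal{E}nd_{\mathcal{O}_X}(\mathcal{P})$ of its pullback to $X$; the Brauer class $[\mathcal{A}]\in H^2_{et}(S,\mathcal{O}_S^{\times})$ dies over $X$, and $\theta$ records the $f_*\mathcal{O}_X^{\times}$-valued trivialization provided by $\phi$, read modulo $\mathcal{O}_S^{\times}$.

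Concretely, first I would choose an \'etale cover $\{U_i\to S\}$ on which $\mathcal{A}$ is split, producing local trivializations whose $PGL$-transition data, lifted to $GL$, yield a \v{C}ech $2$-cocycle $(g_{ijk})$ with values in $\mathcal{O}_S^{\times}$ representing $[\mathcal{A}]$. Next I would use the splitting $\phi$ over $X$: comparing, on each $f^{-1}(U_{ij})$, the splitting module $\mathcal{P}$ with the local splitting modules pulled back from $U_i$ produces a $1$-cochain $(c_{ij})$ with values in $f_*\mathcal{O}_X^{\times}$ satisfying $d(c_{ij})=f^*(g_{ijk})$. Pushing $(c_{ij})$ into the quotient sheaf $\mathcal{Q}$ kills the coboundary, because $g_{ijk}$ comes from $\mathcal{O}_S^{\times}$ and hence maps to $1$ in $\mathcal{Q}$; thus $(\overline{c_{ij}})$ is a $1$-cocycle and I would set $\theta(x):=[(\overline{c_{ij}})]\in H^1_{et}(S,\mathcal{Q})=\Br^{'}(f)$. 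This is precisely the boundary construction that attaches, to a degree-two class trivialized over $X$, a degree-one class of the quotient, so it is canonical once the choices are shown to be irrelevant.

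The hard part will be well-definedness and additivity, and this is where most of the work lies. I would check independence of (i) the \'etale cover and its refinements, (ii) the chosen local trivializations and the $GL$-lifts of the $PGL$-cocycle, and (iii) the splitting module $\mathcal{P}$: replacing $\mathcal{P}$ by $\mathcal{P}\otimes\mathcal{L}$ for a line bundle $\mathcal{L}$ on $X$ alters $(c_{ij})$ by the transition cocycle of $\mathcal{L}$, hence changes $\theta(x)$ exactly by the image of $[\mathcal{L}]\in\Pic(X)$ under $\Pic(X)\to\Br^{'}(f)$ --- which is consistent because the same modification changes the class in $\Br(f)$ by the image of $[\mathcal{L}]$ under $\Pic(X)\to\Br(f)$. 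One then has to descend the assignment from representatives to $K_0(Az(f^*))$, verifying that it respects the defining relations and sends the tensor product of Azumaya algebras (the addition in $\Br(f)$) to the sum of cocycles in $H^1_{et}(S,\mathcal{Q})$; additivity is the standard fact that the $2$-cocycle of a tensor product is the product of the $2$-cocycles, carried through the trivializations. Naturality in $f$ and the conceptual consistency check are then clean: since $\Pic(S)\to\Pic(X)$ is the common map appearing both in the defining sequence of $\Br(f)$ and in the long exact cohomology sequence of the displayed short exact sequence, the construction is compatible with $\Pic(X)\to\Br(f)$ and with the embedding $\Br(S)\hookrightarrow H^2_{et}(S,\mathcal{O}_S^{\times})$, so that $\theta$ assembles into a morphism between these two exact sequences. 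The one input that must be imported is the identification, via the Leray spectral sequence for the affine map $f$, of the relevant $f_*$-cohomology on $S$ with \'etale cohomology on $X$, needed to match the target of the boundary map with $\ker(\Br(S)\to\Br(X))$; this is where affineness of $f$ enters.
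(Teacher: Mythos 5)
Your guiding idea---record the trivialization of the pullback modulo $\mathcal{O}_{S}^{\times}$ and land in $H^{1}_{et}(S, f_{*}\mathcal{O}_{X}^{\times}/\mathcal{O}_{S}^{\times})$---is the same philosophy the paper follows, but the \v{C}ech implementation breaks down at the step where you produce the $1$-cochain $(c_{ij})$ with values in $f_{*}\mathcal{O}_{X}^{\times}$. On $f^{-1}(U_{ij})=X\times_{S}U_{ij}$ you are comparing two splitting modules of the same Azumaya algebra; two such modules differ by tensoring with a uniquely determined line bundle on $X\times_{S}U_{ij}$ (Lemma 4.3 of \cite{VNR}), not by a unit, and a unit-valued $c_{ij}$ exists only when that line bundle is trivial. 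Since $f$ is merely affine, this cannot be arranged by refining the \'etale cover of $S$: line bundles on $X\times_{S}U$ need not die under \'etale localization of $U$ in $S$, and indeed $\Pic({\it X\times_{S}U})$ can be nonzero even for $U$ strictly henselian (Example \ref{not iso in gen}, $\mathbb{C}\subset\mathbb{C}[t^{2},t^{3}]$, where $\Br(f)\cong\Pic({\it B})\neq 0$ while the target vanishes). The same obstruction undoes your consistency check (iii): a line bundle $\mathcal{L}$ on $X$ has no transition cocycle on a cover of the form $\{f^{-1}(U_{i})\}$, so ``$\theta$ changes by the image of $[\mathcal{L}]$'' is not meaningful inside your \v{C}ech complex; and the Leray input you invoke goes the wrong way for affine non-finite $f$ (the edge map $H^{1}_{et}(S,f_{*}\mathcal{O}_{X}^{\times})\to\Pic({\it X})$ is injective but not surjective, since $R^{1}f_{*}$ need not vanish in the \'etale topology for affine morphisms). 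Your argument would be fine for $f$ finite, where $X\times_{S}U$ is semilocal for $U$ strictly local; the theorem, however, is for affine $f$.

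This is precisely the difficulty the paper's proof is built to avoid: instead of unit-valued cochains it attaches to $A=(A_{1},\alpha,A_{2})$ the presheaf $[G_{A}]$ of isomorphism classes of pairs of splitting modules together with a comparison isomorphism, proves that $[G_{A}]$ is a torsor under the relative Picard presheaf $\tilde\Pic^{f}$---whose sections are pairs of line bundles on $U$ with an isomorphism after pullback to $X\times_{S}U$, so the line-bundle ambiguity is absorbed into the coefficients---and only then maps ${\rm Tors}(S,\Pic^{{\it f}})\to\check H_{et}^{1}(S,\Pic^{{\it f}})\to\check H_{et}^{1}(S,\mathcal{P}ic^{f}_{et})\cong H^{1}_{et}(S,f_{*}\mathcal{O}_{X}^{\times}/\mathcal{O}_{S}^{\times})$, where the only vanishing used is that of $\Pic$ on the strictly henselian local rings of $S$, not of $X\times_{S}U$. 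A secondary point: a generator of $\Br(f)$ is a triple $(A_{1},\alpha,A_{2})$ in which $\alpha$ is a similarity class $[(P,u,Q)]$ that forgets the algebra isomorphism $u$ (Remark \ref{equi}); your cocycle is extracted from a chosen splitting isomorphism, so independence of $u$ within its class and of the stabilizing bundles $P,Q$ must also be verified---this bookkeeping is what the auxiliary categories $\tilde Az(f^{*})$ and the group $\tilde\Pic(f)$ in the paper are for.
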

We give an outline of the proof of Theorem \ref{main th}. It is known that there is a natural isomorphism $\check H_{et}^{1}(S, \mathcal{F})\stackrel{\cong}\to  H_{et}^{1}(S, \mathcal{F})$ for every abelian sheaf $\mathcal{F}.$ Here $\check H_{et}^{1}(S, \mathcal{F})$ denotes the first \'etale \v{C}ech cohomology group associated to $\mathcal{F}.$ So we can restrict the target of the map $\theta$ to the first \'etale \v{C}ech cohomology group associated to $ (f_{*}\mathcal{O}_{X}^{\times}/\mathcal{O}_{S}^{\times})_{et}.$  Let  $\mathcal{P}ic^{{\it f}}_{et}$ be the \'etale sheaf associated to the presheaf $\Pic^{{\it f}}$ on $S_{et},$ defined as $\Pic^{{\it f}} ({\it U}) = \Pic({\it f_{U})},$ where ${\it f}_{U}: X\times_S U \to U$ and the group $\Pic({\it f_{U}})$ is generated by pairs $(L_{1}, L_{2})$ of line bundles on $U$ together with an isomorphism $f_{U}^{*}L_{1}\cong f_{U}^{*}L_{2}$ of $\mathcal{O}_{X\times_S U}$-modules with suitable relations (for details, see \cite{SW1}). There is a natural isomorphism $(f_{*}\mathcal{O}_{X}^{\times}/\mathcal{O}_{S}^{\times})_{et}\cong \mathcal{P}ic^{f}_{et}$ (see Lemma \ref{same as etale}). Some well-known  facts pertaining to (pre)-sheaf torsors and the first \'etale \v{C}ech cohomology group allow us to restrict further the target of the map $\theta$ to the set of isomorphism classes of $\Pic^{{\it f}}$-torsors. Therefore, the problem boils down to defining a $\Pic^{{\it f}}$-torsor associated to each element of $\Br(f).$ Given an Azumaya algebra $A$ over a scheme $X,$ one can associate a fibered category $F_{A}$  over $X_{et}$ which in fact defines an element in  $H_{et}^{2}(X, \mathcal{O}_{X}^{\times}).$ More explicitly, $A\mapsto F_{A}$ defines a natural map $\Br(X) \to H_{et}^{2}(X, \mathcal{O}_{X}^{\times})$ (see  \cite{Gir}, \cite{Milne}). We follow a similar approach to prove our desired assertion. Note that the relative $\Br(f)$ is the abelian group generated by isomorphism classes of objects in $Az(f^{*})$ modulo certain relations. An object of $Az(f^{*})$ is a triple $(A_{1}, \alpha, A_{2}),$ where $A_{1}, A_{2}$ are Azumaya algebras over $S$ and $\alpha: f^{*}A_{1}\cong f^{*}A_{2}$ is an isomorphism in a suitable category. So we have the fibered categories $F_{A_{1}}$ and $F_{A_{2}}$ associated to $A_{1}$ and $A_{2}.$ By using the categories $F_{A_{1}}$ and $F_{A_{2}},$ we define a relative category $G_{A}$ over $S_{et}$  associated to $A:= (A_{1}, \alpha, A_{2}).$ Further, using the category $G_{A}$, we construct a $\Pic^{{\it f}}$-torsor associated to $A:= (A_{1}, \alpha, A_{2})$ (see (\ref{first step}) and (\ref{second step})). A significant portion of section 4 is dedicated to the construction of such a presheaf torsor and proving its required properties.

Next, we discuss the relative Brauer group of subintegral extensions. We say that an extension $A\hookrightarrow B$ is {\it subintegral} if $B$ is integral over $A$ and ${\rm{{\rm Spec}}}(B) \to {\rm{Spec}}(A)$ is a bijection inducing isomorphisms on all residue fields. For example, $\mathbb{C}[t^{2}, t^{3}]\hookrightarrow \mathbb{C}[t]$ is a subintegral extension. We show that if $f: A \hookrightarrow B$ is a subintegral extension of noetherian $\mathbb{Q}$-algebras then the induced map $\Br(A) \stackrel{f^{*}}\to \Br(B)$ is an isomorphism and $\Br(f)=0$ (see Theorem \ref{vanishing}).

We further study the Kummer exact sequence in the relative setting.  Write $\mathcal{I}_{et}$ for the \'etale sheaf  $f_{*}\mathcal{O}_{X}^{\times}/\mathcal{O}_{S}^{\times}.$ Let $\mathcal{\mu}_{n}^{f}$ denotes the kernel of $\mathcal{I}_{et}\stackrel{n} \to \mathcal{I}_{et}.$ We prove that if $f: X\to S$ is a faithful finite map of schemes (i.e. finite and the structure map $\mathcal{O}_{S}\rightarrow f_{*}\mathcal{O}_{X}$ is injective) and characteristic of $k(s)$ does not divide $n$ for any $s\in S$ then the sequence  
\begin{align*}
   0 \to \mu_{n}^{f} \to \mathcal{I}_{et}\stackrel{n}\to \mathcal{I}_{et} \to 0 
 \end{align*}
 of \'etale sheaves is exact (see Proposition \ref{Kummer sequence}). As an application, we obtain  $H_{et}^{0}(S, \mu_{n}^{f})\cong {_n\Pic({\it f})}$ and the following short exact sequence (see Theorem \ref{cohomology of roots unity})
 \begin{align*}
    0\to \Pic({\it f}) \otimes \mathbb{Z}/n\mathbb{Z} \to {\it H_{et}^{1}(S, \mu_{n}^{f}) \to  {_n{\it H}_{et}^{1}(S, \mathcal{I}_{et})}} \to 0,
  \end{align*} where $\Pic({\it f})$ is the relative Picard group of $f$ (see \cite{SW1}). We also show that if $f: A\hookrightarrow B$ is a finite subintegral extension of noetherian $\mathbb{Q}$-algebras then $H_{et}^{i}({\rm{Spec}}(A), \mu_{n}^{f})=0$ for $i\geq 0$ (see Theorem \ref{cohomology for subintegral map}).
  
  \medskip
  
  {\bf Acknowledgements:} The author is grateful to Charles Weibel for his helpful comments during the preparation of this article. He would also like to thank the referee for valuable comments and suggestions.
\section{Relative Brauer groups}\label{basic}
In this section, we define the notion of relative Brauer groups. The most of the material in this section was developed in \cite{BA}, \cite{AG}, \cite{ Bass-Tata}, \cite{b},  \cite{VNR} by various authors.

Recall that two Azumaya algebras $A$ and $A^{'}$ over a scheme $X$ are said to be {\bf similar} if there
 exist locally free $\mathcal{O}_{X}$-modules $E$ and $E^{'},$ of finite rank over $\mathcal{O}_{X},$ such that 
 $$A \otimes_{\mathcal{O}_{X}} End(E) \cong A^{'}\otimes_{\mathcal{O}_{X}} End(E^{'}).$$ The set of similar classes of Azumaya algebras on $X$ forms an abelian group under $\otimes_{\mathcal{O}_{X}},$ which is known as the Brauer group $\Br(X)$ of $X.$ If $[A]\in \Br(X)$ then $[A]^{-1}=[A^{op}],$ where $A^{op}$ denotes the opposite algebra of $A.$ The cohomological Brauer group of $X$ is $H_{et}^{2}(X, \mathcal{O}_{X}^{\times})$ and is denoted by $\Br^{'}(X).$ The Picard group of $X$ is denoted by $\Pic({\it X})$. 
 
 Given two Azumaya algebras $A$ and $B$ over $X,$ let 
 $\Delta(A, B)$ (resp. $\tilde \Delta(A, B)$)  be the set consisting of all triples $(P, u, Q)$ with  $P$, $Q$ are (resp. self dual) locally free $\mathcal{O}_{X}$-modules of finite rank and $$u: A\otimes End(P)\to B\otimes End(Q)$$ is an isomorphism of algebras.  We define an equivalence relation $\sim $ on $\Delta(A, B)$ by   $(P, u, Q) \sim (P^{'}, u^{'}, Q^{'})$ if and only if there exist locally free $\mathcal{O}_{X}$-modules $E$ and $E^{'}$ of finite rank over $X$ and $P\otimes E\cong P^{'} \otimes E^{'},$  $Q\otimes E\cong Q^{'} \otimes E^{'}.$ 
 By considering $E$ and $E^{'}$ are to be self dual locally free $\mathcal{O}_{X}$-modules, one can check that $\sim$ is an equivalence relation on $\tilde \Delta(A, B)$ as well. 
 
 \begin{remark}\label{equi}{\rm
  Note that  $u$ and $u^{'}$ do not play any role for the equivalence relation $\sim $ on $\Delta(A, B)$ (resp. $\tilde \Delta(A, B)).$ In fact $(P, u, Q)$ and $(P, v, Q)$ both lie in a same class whenever $u, v$ are possibly distinct $\mathcal{O}_{X}$-algebra isomorphisms between $A\otimes End(P)$ and $B\otimes End(Q).$}
 \end{remark}

  \subsection*{The category $Az(X)$} $Az(X)$ is the category whose objects are the Azumaya algebras over a scheme $X$ and the set of morphisms between two objects $A$ and $B$ is defined by $$Hom_{Az(X)}(A, B):= \Delta(A, B)/\sim.$$
  Let $\phi_{1}=[(P_{1}, u_{1}, Q_{1})]\in Hom_{Az(X)}(A, B),$ $\phi_{2}= [(P_{2}, u_{2}, Q_{2})] \in  Hom_{Az(X)}(B, C).$ Then we define the composition $\phi_{2}\phi_{1}:= [(P_{1}\otimes P_{2}, u, Q_{1}\otimes Q_{2})]$ with $u: A\otimes End(P_{1}\otimes P_{2})\cong A\otimes End(P_{1})\otimes End(P_{2})\stackrel{u_{1}}\cong B\otimes End(Q_{1})\otimes End(P_{2})\stackrel{u_{2}}\cong  C\otimes End(Q_{2})\otimes End(Q_{1})\cong C\otimes End(Q_{1}\otimes Q_{2}).$ One can easily check that this composition is independent of the chosen representatives of $\phi_{1}$ and $\phi_{2}.$ If $\phi= [(P, u, Q)]\in Hom_{Az(X)}(A, B)$ then $\phi$ is an isomorphism with inverse $\phi^{-1}= [(Q, u^{-1}, P)]$ (see Remark 5.7 of \cite{VNR}). The tensor product $\otimes$ on $\mathcal{O}_{X}$-algebras defines a product on $Az(X).$

\subsection*{The category $\Omega Az(X)$} $\Omega Az(X)$ is the category consisting of all couples $(A, \phi)$ with $A \in Az(X)$ and $\phi$ is an automorphism in $Az(X).$ A morphism $(A, \phi) \to (B, \varphi)$ is a morphism $h: A\to B$ in $Az(X)$ such that $\varphi h= h \phi.$ Note that $\Omega Az(X)$ is a category with product and composition in the sense of \cite{Bass-Tata}.  We refer to 5.12 of \cite{VNR} for product and composition rules in $\Omega Az(X).$
 
 We now state a result from \cite{VNR}.
 
 \begin{lemma}\label{known iso}
  Let $X$ be a scheme. Then there are natural isomorphisms of abelian groups
  \begin{enumerate}
   \item $K_{0}(Az(X))\stackrel{\cong}\to \Br(X);$
   \item  $K_{0}(\Omega Az(X)) \stackrel{\cong} \to \Pic({\it X}).$
  \end{enumerate}
  
  \begin{proof}
   We get the assertions by applying Theorem 5.9 and Theorem 5.13 of \cite{VNR} to the category of $\mathcal{O}_{X}$-modules.
  \end{proof} \end{lemma}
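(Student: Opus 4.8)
The plan is to deduce both isomorphisms from the general description of the Grothendieck group of a category with product (and composition) in the sense of Bass, applied to the category of $\mathcal{O}_X$-modules; this is exactly what Theorems 5.9 and 5.13 of \cite{VNR} provide, so the real content is checking that $Az(X)$ and $\Omega Az(X)$ fit that framework and then making the resulting monoids explicit. First I would note that $Az(X)$ is a symmetric monoidal category under $\otimes$ in which, as recalled above, every morphism is an isomorphism; hence its $K_0$ is the group completion of the abelian monoid $\pi_{0}(Az(X))$ of isomorphism classes of objects, with addition induced by $\otimes$. Likewise $\Omega Az(X)$ carries the induced product $(A,\phi)\perp(B,\varphi)=(A\otimes B,\phi\otimes\varphi)$ together with the composition law $(A,\phi)\circ(A,\psi)=(A,\phi\psi)$ on objects with matching underlying algebra, so that $K_0(\Omega Az(X))$ is the abelian group on the isomorphism classes $[(A,\phi)]$ modulo additivity for both $\perp$ and $\circ$.

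For part (1) I would identify $\pi_{0}(Az(X))$ with $\Br(X)$ straight from the definitions: two Azumaya algebras $A$ and $B$ are isomorphic in $Az(X)$ precisely when $\Delta(A,B)\neq\emptyset$, i.e.\ when there are finite-rank locally free $\mathcal{O}_X$-modules $P,Q$ with $A\otimes End(P)\cong B\otimes End(Q)$ as algebras, which is exactly the relation defining similarity. So $\pi_{0}(Az(X))$ is the set of similarity classes, the monoid structure induced by $\otimes$ is Brauer multiplication, and since $\Br(X)$ is already a group ($[A]^{-1}=[A^{op}]$) it equals its own group completion; hence $K_0(Az(X))\cong\Br(X)$, naturally in $X$ since pullback is compatible with all of this data.

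For part (2) the crux is a canonical isomorphism $\mathrm{Aut}_{Az(X)}(A)\cong\Pic(X)$ for every $A$, compatible with $Az(X)$-morphisms and with $\otimes$. By Remark~\ref{equi} an automorphism of $A$ in $Az(X)$ is a pair $(P,Q)$ of finite-rank locally free modules admitting an algebra isomorphism $A\otimes End(P)\cong A\otimes End(Q)$, taken modulo $\sim$; I would define $\Pic(X)\to\mathrm{Aut}_{Az(X)}(A)$ by $L\mapsto[(\mathcal{O}_X,c_L,L)]$, with $c_L$ the evident isomorphism obtained from $End(L)\cong\mathcal{O}_X$, and verify it is a group isomorphism by analysing precisely when two such pairs are $\sim$-equivalent. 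Granting this, $(A,\phi)\mapsto L_\phi$ sends an object of $\Omega Az(X)$ to a class in $\Pic(X)$; using $A\otimes A^{op}\cong End(A)$, $\perp$-additivity, and the fact that $End(A)\cong\mathcal{O}_X$ in $Az(X)$ (since $[End(A)]=0$ in $\Br(X)$), one rewrites $[(A,\phi)]$ in $K_0(\Omega Az(X))$ as $[(\mathcal{O}_X,c_{L_\phi},L_\phi)]$, so $L\mapsto[(\mathcal{O}_X,c_L,L)]$ is a surjection $\Pic(X)\to K_0(\Omega Az(X))$. It is a homomorphism by $\circ$-additivity (composition of these generalized automorphisms corresponds to tensoring line bundles), and injective because $(A,\phi)\mapsto L_\phi$ descends to a two-sided inverse $K_0(\Omega Az(X))\to\Pic(X)$ — one checks it respects both defining relations and is invariant under isomorphism in $\Omega Az(X)$, the latter being automatic since $\Pic(X)$ is abelian. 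Naturality in $X$ is again routine.

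I expect the main obstacle to be the identification $\mathrm{Aut}_{Az(X)}(A)\cong\Pic(X)$ together with all the compatibilities it must satisfy: one has to control the equivalence relation $\sim$, which allows twisting by arbitrary finite-rank locally free sheaves and not merely line bundles, and check that the identification is natural along $Az(X)$-morphisms and multiplicative for $\otimes$. This, together with setting up the $K_0$-formalism for a category with product and composition, is precisely what is packaged in Theorems 5.9 and 5.13 of \cite{VNR}, which is why in the write-up I would simply invoke those results rather than reproduce the argument.
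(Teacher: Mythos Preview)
Your proposal is correct and takes essentially the same approach as the paper: both reduce the statement to Theorems~5.9 and~5.13 of \cite{VNR} applied to the category of $\mathcal{O}_X$-modules. The paper simply cites those results in one line, whereas you unpack what they amount to (identifying $\pi_0(Az(X))$ with similarity classes for (1), and analysing $\mathrm{Aut}_{Az(X)}(A)\cong\Pic(X)$ for (2)); but the substance is the same, and you yourself conclude that in the write-up you would invoke \cite{VNR} rather than reproduce the argument.
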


 For a map $f: X\to S$ of schemes, we have the base change functor $f^{*}:Az(S)\to Az(X)$.
 
\subsection*{The category $Az(f^{*})$} $Az(f^{*})$ is the category consisting of all triples $(A, \alpha, B)$ where $A, B \in Az(S)$ and $\alpha: f^{*}(A)\to f^{*}(B)$ is an isomorphism in $Az(X).$ Here $\alpha= [(P, u, Q)]$ with $P$, $Q$ are locally free $\mathcal{O}_{X}$-modules of finite rank and $u: f^{*}A \otimes End(P)\cong f^{*}B \otimes End(Q)$(i.e. $f^{*}A$ is similar to $f^{*}B$). A morphism $(A, \alpha, B) \to (A^{'}, \alpha^{'}, B^{'})$ is a pair $(u, v),$ where $u: A\to A^{'}$ and $v: B\to B^{'}$ are morphisms in $Az(S)$ such that $\alpha^{'} f^{*}u= f^{*}v~ \alpha$.  We refer to 5.17 of \cite{VNR} for product and composition rules in $Az(f^{*}).$
 
 Following Bass \cite{Bass-Tata}, we define the {\bf relative Brauer group} $\Br(f)$ of $f$ to be the Grothendieck group $K_{0}(Az(f^{*})),$ i.e., the abelian group generated by $[(A_{1}, \alpha, A_{2})],$ where $ (A_{1}, \alpha, A_{2})\in Az(f^{*}),$ and with the following relations:
\begin{enumerate}
 \item $[(A_{1}, \alpha, A_{2})]+ [(A_{1}^{'}, \alpha^{'}, A_{2}^{'})]= [(A_{1}\otimes A_{1}^{'}, \alpha\otimes \alpha^{'}, A_{2}\otimes A_{2}^{'})]; $
 \item $[(A_{1}, \alpha, A_{2})] + [(A_{2}, \beta, A_{3})]= [(A_{1}, \beta\alpha, A_{3})].$
\end{enumerate}

\begin{remark}\label{presentation}\rm{
 By (2), $[(A, 1, A)]=0$ for any $A\in Az(S)$. Then by using (1), we get that every element of $\Br(f)$ has the form $[(A, \alpha, End_{\mathcal{O}_{S}}(B))],$ where $A$ and $B$ are in $Az(S)$. For general details, see Remark 1.3 in Chapter 1 of \cite{Bass-Tata}.}
\end{remark}

By Theorem 5.18 of \cite{VNR} (more precisely, Example 5.19(2) of \cite{VNR}), there is a natural exact sequence of abelian groups for each $f: X\to S$, 
\begin{align}\label{Pic-Br}
 \Pic({\it S}) \stackrel{{\it f}^{*}}\to \Pic({\it X}) \stackrel{ \partial}\to \Br({\it f}) \stackrel{\varrho}\to \Br({\it S}) \stackrel{{\it f}^{*}}\to \Br({\it X}).
\end{align}
Here $\varrho([(A_{1}, \alpha, A_{2})])= A_{1}\otimes A_{2}^{op}$ and the map $\partial$ is defined as follows. By Lemma \ref{known iso}(2), we can identify $\Pic({\it X})$ with $K_{0}(\Omega Az(X)).$ Let $Az^{'}(X)$ denote the full subcategory of $Az(X)$ whose objects are all $f^{*}A,$ $A\in Az(S).$  The functor $f^{*}: Az(S) \to Az(X)$ is cofinal (see Proposition 5.15 of \cite{VNR}). So, $Az^{'}(X)$ is a cofinal subcategory of $Az(X).$ We refer to p. 19 of \cite{Bass-Tata} for general details related to cofinal functors and cofinal subcategories. Now the Theorem 3.1(b) of \cite{Bass-Tata} says that there is an isomorphism $\kappa: K_{0}(\Omega Az^{'}(X))\to K_{0}(\Omega Az(X)).$ We also have a natural homomorphism $\partial_{1}: K_{0}(\Omega Az^{'}(X)) \to K_{0}(Az(f^{*})),$ sending $[(f^{*}A, \alpha)]$ to $[(A, \alpha, A)].$  Hence we define $\partial: K_{0}(\Omega Az(X)) \to K_{0}(Az(f^{*}))$ as $\partial_{1} \kappa^{-1}.$

\begin{remark}\label{Br=Pic}\rm{
 Suppose that $S= \rm{Spec}(\mathbb{Z}),$ i.e., $f: X \to \rm{Spec}(\mathbb{Z}).$ By Proposition 4.2 of \cite{MS}, $\Br(\mathbb{Z})=0.$ Then the sequence (\ref{Pic-Br}) implies that $\Br(f)\cong \Pic({ \it X}).$ }
\end{remark}
Next, we define a slight variant of the group $\Br(f)$ which will be used in the section \ref{main}.
\subsection*{The categories $\tilde Az(X)$ and $\tilde Az(f^{*})$} $\tilde Az(X)$ is the category whose objects are the Azumaya algebras over a scheme $X$ and the set of morphisms between two objects $A$ and $B$ is defined by $$Hom_{Az(X)}(A, B):= \tilde \Delta(A, B)/\sim.$$ $\tilde Az(f^{*})$ is the category consisting of all triples $(A, \alpha, B)$ where $A, B \in \tilde Az(S)$ and $\alpha: f^{*}(A)\to f^{*}(B)$ is an isomorphism in $\tilde Az(X).$ A morphism $(A, \alpha, B) \to (A^{'}, \alpha^{'}, B^{'})$ is a pair $(u, v),$ where $u: A\to A^{'}$ and $v: B\to B^{'}$ are morphisms in $\tilde Az(S)$ such that $\alpha^{'} f^{*}u= f^{*}v~ \alpha$. One can define a product and composition on $\tilde Az(X)$ (resp. $\tilde Az(f^{*})$) in a similar way as it is defined on $ Az(X)$ (resp. $Az(f^{*})$). 

We define $\tilde \Br(f):= K_{0}(\tilde Az(f^{*})).$ Given $A:= (A_{1}, \alpha, A_{2}) \in Az(f^{*})$ with $\alpha = [(P, u, Q)],$ let $\mathbb{E}nd (A):= (End_{\mathcal{O}_{S}}(A_{1}), End(\alpha), End_{\mathcal{O}_{S}}(A_{2})),$ where $$End(\alpha):= [(End_{\mathcal{O}_{X}}(P), End(u), End_{\mathcal{O}_{X}}(Q))]$$ with
\begin{align*}
 End(u):  f^{*}End_{\mathcal{O}_{S}}(A_{1}) \otimes End(End(P))&\cong End_{\mathcal{O}_{X}}(f^{*}A_{1}\otimes End(P))\\ &\cong End_{\mathcal{O}_{X}}(f^{*}A_{2}\otimes End(Q)) ~({\rm using}~ u)\\ &\cong f^{*}End_{\mathcal{O}_{S}}(A_{2}) \otimes End(End(Q)).
\end{align*}
Observe that $(End_{\mathcal{O}_{X}}(P), End(u), End_{\mathcal{O}_{X}}(Q))\in \tilde \Delta(  f^{*}End_{\mathcal{O}_{S}}(A_{1}),  f^{*}End_{\mathcal{O}_{S}}(A_{2})).$ So $\mathbb{E}nd (A) \in \tilde Az(f^{*}).$
We define a map $\mathbb{E}nd:  ob( Az(f^{*})) \to \tilde \Br(f)$ by $A\mapsto [\mathbb{E}nd (A)].$ The following facts are easy to verify:
\begin{enumerate}
 \item If $A:= (A_{1}, \alpha, A_{2})\cong B:= (B_{1}, \beta, B_{2})$ in $Az(f^{*})$ then $[\mathbb{E}nd (A)]\cong [\mathbb{E}nd (B)]$ in $\tilde \Br(f).$
 \item  
 $ [\mathbb{E}nd (A_{1}\otimes A_{1}^{'}, \alpha \otimes \alpha^{'}, A_{2}\otimes A_{2}^{'})]= [\mathbb{E}nd(A_{1}, \alpha, A_{2})]+ [\mathbb{E}nd(A_{1}^{'}, \alpha^{'}, A_{2}^{'})],$ for every pair of objects $(A_{1}, \alpha, A_{2}), (A_{1}^{'}, \alpha^{'}, A_{2}^{'})$ in $Az(f^{*}).$
 \item   $[\mathbb{E}nd(A_{1}, \beta \alpha, A_{3})]= [\mathbb{E}nd(A_{1},  \alpha, A_{2})] + [\mathbb{E}nd(A_{2}, \beta, A_{3})],$ for every pair of objects $(A_{1},  \alpha, A_{2}), (A_{2}, \beta, A_{3})$ in $Az(f^{*}).$
\end{enumerate}
Therefore, the map $\mathbb{E}nd$ induces a natural group homomorphism 
\begin{align}\label{Br to tilde Br}
 \Upsilon: \Br(f) \to \tilde \Br(f)
\end{align}
sending $[(A_{1}, \alpha, A_{2})]$ to $[\mathbb{E}nd((A_{1}, \alpha, A_{2}))].$

\section{Relative Cohomological Brauer Groups}\label{introduce coh br}

 Let $f:L\hookrightarrow K$ be a field extension. Then the relative Brauer group in the classical sense is just the kernel of the natural map $\Br(L)\rightarrow \Br(K)$ and it was denoted by  $\Br(K|L)$(for details, see \cite{FS}, \cite{FS1}, \cite{FS2}). We begin with the following observation.
 
 \begin{lemma}\label{rel}
  If $f: L \hookrightarrow K$ is a finite field extension then 
$H_{et}^{1}({\rm Spec}(L), f_{*}\mathcal{O}^{\times}_K/\mathcal{O}^{\times}_{L})\cong\Br(K|L)$.
 \end{lemma}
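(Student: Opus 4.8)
The plan is to identify $H_{et}^{1}({\rm Spec}(L), f_{*}\mathcal{O}^{\times}_K/\mathcal{O}^{\times}_{L})$ with a Galois cohomology group and then match it with the known presentation of $\Br(K|L)$. Since ${\rm Spec}(L)$ is the spectrum of a field, the small \'etale site is controlled by the absolute Galois group $G_L=\operatorname{Gal}(L_s/L)$; an \'etale sheaf on ${\rm Spec}(L)$ is a discrete $G_L$-module, and \'etale cohomology coincides with continuous Galois cohomology. First I would compute the stalk (equivalently, the $G_L$-module) attached to the sheaf $f_{*}\mathcal{O}^{\times}_K/\mathcal{O}^{\times}_{L}$. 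Because $f$ is finite, $f_*$ commutes with the relevant limits, so the module in question is $(K\otimes_L L_s)^{\times}/L_s^{\times}$, where $K\otimes_L L_s\cong \prod_{\sigma} L_s$ with the product over the $[K:L]_{\rm sep}$-many $L$-embeddings of (the separable part of) $K$ into $L_s$, with $G_L$ permuting the factors. This is an induced/coinduced module.

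The key step is then to apply Shapiro's lemma. If $K/L$ is separable with $E$ a Galois closure and $H=\operatorname{Gal}(E/K)\subseteq G=\operatorname{Gal}(E/L)$, then $(K\otimes_L L_s)^\times \cong \operatorname{Ind}_H^{G_L}(L_s^\times)$ as a $G_L$-module, and modding out by the diagonal copy of $L_s^\times$ exhibits $f_{*}\mathcal{O}^{\times}_K/\mathcal{O}^{\times}_{L}$ as fitting in a short exact sequence of $G_L$-modules
\begin{align*}
 1\to L_s^\times \to \operatorname{Ind}_{H}^{G_L}(L_s^\times)\to f_{*}\mathcal{O}^{\times}_K/\mathcal{O}^{\times}_{L}\to 1.
\end{align*}
Taking the long exact sequence in Galois cohomology and using Hilbert 90 ($H^1(G_L,L_s^\times)=0$) together with Shapiro ($H^i(G_L,\operatorname{Ind}_H^{G_L}L_s^\times)\cong H^i(H,L_s^\times)$, which is $0$ for $i=1$ and $\operatorname{Br}(K)$ for $i=2$) gives an isomorphism
\begin{align*}
 H_{et}^{1}({\rm Spec}(L), f_{*}\mathcal{O}^{\times}_K/\mathcal{O}^{\times}_{L})\cong \ker\bigl(H^2(G_L,L_s^\times)\to H^2(H,L_s^\times)\bigr)=\ker\bigl(\operatorname{Br}(L)\to \operatorname{Br}(K)\bigr)=\Br(K|L),
\end{align*}
where the last map is the restriction map, which agrees with $f^*$. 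I would also need the inseparable case: if $K/L$ is purely inseparable then $K\otimes_L L_s$ is a local ring with residue field $L_s$, the quotient $f_*\mathcal{O}_K^\times/\mathcal{O}_L^\times$ has trivial $H^1$ over a perfect-ish site argument, and correspondingly $\Br(K|L)=\Br(L)$ fails—so more care is needed; likely the cleanest route is to reduce to the separable part, or simply to invoke that \'etale-locally $f$ looks like the separable closure and the inseparable part contributes nothing to $H^1_{et}$ of ${\rm Spec}(L)$ since that site only sees separable extensions.

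The main obstacle I anticipate is bookkeeping around non-Galois and inseparable extensions: making the Shapiro-lemma identification of $f_*\mathcal{O}_K^\times$ as a (co)induced module fully rigorous when $K/L$ is merely finite (not Galois) requires choosing a Galois closure and tracking the $G_L$-action on the set of embeddings, and the purely inseparable case must be handled separately or absorbed. A secondary point to verify carefully is that the connecting/restriction map $\operatorname{Br}(L)\to\operatorname{Br}(K)$ produced by this cohomological machinery is literally the map $f^*$ used to define $\Br(K|L)$ in \cite{FS}, which is standard but should be cited. Alternatively, one could bypass Galois cohomology entirely and argue directly with the exact sequence $\Pic\to\Pic\to\Br(f)\to\Br(S)\to\Br(X)$ from \eqref{Pic-Br} together with the comparison between $\Br(f)$ and $\Br'(f)$ established later in the paper; but for a finite field extension $\Pic({\rm Spec}(L))=\Pic({\rm Spec}(K))=0$, so \eqref{Pic-Br} already shows $\Br(f)\cong\Br(K|L)$, and it would then remain only to show $\Br(f)\cong\Br'(f)$ in this case—which is presumably exactly how the lemma is meant to feed into the rest of the paper.
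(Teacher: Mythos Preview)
Your approach is correct and at its core is the same idea as the paper's: take the long exact sequence associated to the short exact sequence of \'etale sheaves $0\to\mathcal{O}_L^\times\to f_*\mathcal{O}_K^\times\to f_*\mathcal{O}_K^\times/\mathcal{O}_L^\times\to 0$, and use that $H^1$ of $\mathbb{G}_m$ vanishes over a field while $H^2$ gives the Brauer group. The difference is purely in execution. You translate everything into Galois cohomology, identify $f_*\mathcal{O}_K^\times$ explicitly as an induced $G_L$-module, invoke Shapiro's lemma, and then worry separately about the inseparable case. The paper stays in \'etale cohomology and simply uses that for a finite morphism $f$ one has $H^i_{et}(\operatorname{Spec}(L),f_*\mathcal{F})\cong H^i_{et}(\operatorname{Spec}(K),\mathcal{F})$ (since $R^jf_*=0$ for $j>0$); this is the sheaf-theoretic form of Shapiro and holds for \emph{any} finite extension, separable or not. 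So the paper's two-line argument already covers the inseparable case uniformly, and your concerns about bookkeeping non-Galois and purely inseparable extensions evaporate if you do not leave the \'etale setting. Your alternative suggestion at the end---use \eqref{Pic-Br} plus a comparison $\Br(f)\cong\Br'(f)$---would be circular here, since this lemma is precisely what motivates that comparison.
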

 
 \begin{proof}
  For a field $F$, $H^{2}_{et}(F, \mathcal{O}^{\times}_{F})\cong Br(F)$ and $H^{1}_{et}(F, \mathcal{O}^{\times}_{F})\cong \Pic({\it F})=0.$ Then (by using the exact sequence (\ref{Pic-Br}))\begin{align*}
         H_{et}^{1}({\rm Spec}(L), f_{*}\mathcal{O}^{\times}_K/\mathcal{O}^{\times}_{L}) &\cong\ker [H^{2}_{et}(L, \mathcal{O}^{\times}_{L}) \to  H^{2}_{et}(K, \mathcal{O}^{\times}_{K})] \\
         &\cong \ker [\Br(L)\to \Br(K)]\\
         &\cong \Br(K|L) 
         \end{align*}\end{proof}

 Motivated by the above lemma,  we define   $$ \Br^{'}(f):= H_{et}^{1}(S,   f_{*}\mathcal{O}_{X}^{\times}/\mathcal{O}_{S}^{\times})$$ for a faithful affine map $f: X \rightarrow S$ of schemes. We call it {\bf relative cohomological Brauer group} of $f$.
 
 \begin{example}\label{iso in hensel}\rm{
   Suppose that $f: X \to S$ is faithful finite, i.e., finite and the structure map $\mathcal{O}_{S}\rightarrow f_{*}\mathcal{O}_{X}$ is injective and $S={\rm{{\rm Spec}}}(A)$, where $A$ is a hensel local ring. Then $X= {\rm{{\rm Spec}}}(B)$ and $B$ is a finite product of hensel local rings. In this situation, $\Br(S)\cong H_{et}^{2}(S, \mathcal{O}_{S}^{\times})$ and $\Br(X) \hookrightarrow H_{et}^{2}(X, \mathcal{O}_{X}^{\times})$ by Corollary 2.12 of \cite{Milne}. Therefore, by a similar argument as Lemma \ref{rel}, $\Br(f)\cong \Br^{'}(f).$}
 \end{example}
 
 \begin{example}\label{not iso in gen}\rm{
 In general, $\Br(f)$ and $\Br^{'}(f)$ are not isomorphic for any faithful affine $f: X\to S.$ For example, let $S={\rm{{\rm Spec}}}(A)$, where $A$ is a strictly hensel local ring. Then $X= {\rm{{\rm Spec}}}(B),$ and $\Br^{'}(f)=0$ because the higher cohomology vanishes for a strictly henselian ring. We also have  $\Br(S)=0.$ Therefore $\Br(f)\cong \Pic({\it B})$ by (\ref{Pic-Br}).} In particular, one can consider $\mathbb{C} \subset \mathbb{C}[t^{2}, t^{3}].$
 \end{example}
 
 \begin{example}\label{torsion}\rm{
  Suppose that $f: X \to S$ is faithful finite and $S={\rm{{\rm Spec}}}(A)$, where $A$ is a local ring. Then $X= {\rm{{\rm Spec}}}(B)$ and $B$ is a semilocal ring because $B$ is finite over a local ring. By (\ref{Pic-Br}),
  $$ 0\to \Br(f) \to \Br(A) \to \Br(B)$$ is exact and $\Br(f)\hookrightarrow \Br^{'}(f).$  We know that if a scheme $X$ has only finitely many connected components then $\Br(X)$ is torsion (see Proposition 2.7 of \cite{Milne}). Therefore $\Br(f)$ is torsion. By Theorem 1.1 of \cite{Djong}, $\Br(A)\cong H_{et}^{2}(A, \mathcal{O}_{A}^{\times})_{tor}$ and same holds for $B$. In fact, we get $\Br(f)\cong H_{et}^{1}(S, f_{*}\mathcal{O}_{X}^{\times}/\mathcal{O}_{S}^{\times})_{tor}.$}
 \end{example}

 \begin{example}\label{not always torsion}\rm{
  The group $\Br(f)$ is not always torsion. For example, consider $f: X={\rm {\rm Spec}}(\mathbb{Q}[t^2-t, t^3-t^2]) \to S={\rm {\rm Spec}} (\mathbb{Q}).$ Then $\Pic({\it X})\cong \mathbb{Q}^{\times}.$ By (\ref{Pic-Br}), we get that $\Br(f)$ contains the torsion free subgroup $ \mathbb{Q}^{\times}.$} 
 \end{example}

 \section{Main Theorem}\label{main}
 The main goal of this section is to construct a natural group homomorphism $\Br(f) \to H^{1}_{et}(S, f_{*}\mathcal{O}_{X}^{\times}/\mathcal{O}_{S}^{\times}).$  Throughout this section, $f: X \to S$ assumed to be a faithful affine map of schemes and  $f_{U}$ always denotes the map $ X\times_{S} U \to U.$ 
 
 \subsection*{The category $F_{A}$}Given an Azumaya algebra $A$ on $X$ one can associate a category $F_{A}$ over $X_{et}$ as follows (see p. 145 of \cite{Milne}).   For an \'etale map $j: U \to X,$ an object of $F_{A}(U)$ is a pair $(E, \tau),$ where $E$ is a locally free $\mathcal{O}_{U}$-module of finite rank and $\tau$ is an isomorphism $End(E)\cong j^{*}A;$ a morphism $(E, \tau) \to (E^{'}, \tau^{'})$ is an isomorphism $E \to E^{'}$  such that the obvious diagram 
 
   \begin{equation}\label{commutativity}
     \xymatrix{
 End(E^{'}) \ar[rd]^{\tau^{'}} \ar[r] & End(E) \ar[d]^{\tau} \\
 &  j^{*}A}
   \end{equation}

   commutes.

  {\it Notation}: Given a sheaf $\mathcal{F}$ of $\mathcal{O}_{X}$-modules, we write $\mathcal{F}^{\vee}$ for the dual of $\mathcal{F}.$ 
   \subsection*{The category $G_{A}$} Given an object $A:= (A_{1}, \alpha, A_{2})$ in $\tilde Az(f^{*})$ (or in $Az(f^{*})$), we can associate a category $G_{A}$ over $S_{et}$ as follows. For an \'etale map $j: U \to S,$ an object of $G_{A}(U)$ is a triple $((E_{1}, \tau_{1}), \nu, (E_{2}, \tau_{2})),$ where $(E_{i}, \tau_{i}) \in F_{A_{i}}(U)$ for $i=1,2$ and $\nu$ is an isomorphism ${f_{U}^{*}}E_{1} \otimes ({f_{U}^{*}}E_{2})^{\vee}\cong  ({f_{U}^{*}}E_{1})^{\vee} \otimes {f_{U}^{*}}E_{2}$ of $\mathcal{O}_{X\times_{S} U}$-modules.  Here $F_{A_{i}}$ are the categories associated to $A_{i}$.  Similarly, a morphism $((E_{1}, \tau_{1}), \nu, (E_{2}, \tau_{2}) ) \to ((E_{1}^{'}, \tau_{1}^{'}), \nu^{'}, (E_{2}^{'}, \tau_{2}^{'}))$ is a pair of isomorphisms $E_{1} \to E_{1}^{'}$ and $E_{2} \to E_{2}^{'}$ such that the diagram like (\ref{commutativity}) and 
    \begin{equation}\label{comm2}
     \xymatrix{
   {f_{U}^{*}}E_{1} \otimes ({f_{U}^{*}}E_{2})^{\vee} \ar[d]^{\nu} \ar[r] &  {f_{U}^{*}}E_{1}^{'} \otimes({f_{U}^{*}}E_{2}^{'})^{\vee} \ar[d]^{\nu^{'}} \\
  ( {f_{U}^{*}}E_{1})^{\vee} \otimes{f_{U}^{*}}E_{2} \ar[r] &  {(f_{U}^{*}}E_{1}^{'})^ {\vee} \otimes {f_{U}^{*}}E_{2}^{'}}
   \end{equation} commute.
 
 Let $[G_{A}(U)]$ be the set of all isomorphism classes of objects of $G_{A}(U).$ We write $[((E_{1}, \tau_{1}), \nu, (E_{2}, \tau_{2}))]$ for the isomorphism class of $((E_{1}, \tau_{1}), \nu, (E_{2}, \tau_{2})).$ Then 
 the assignment $$U \mapsto [G_{A}(U)]$$ is a presheaf of sets on $S_{et},$ where the restriction maps are given by pullbacks. It is denoted by $[G_{A}].$

\subsection*{Presheaf torsors} Let $\mathcal{C}$ be a site. Let $\mathcal{G}$ be a presheaf of groups on $\mathcal{C}.$ A $\mathcal{G}$-torsor is a presheaf of sets $\mathcal{F}$ on $\mathcal{C}$ equipped with an action $\rho: \mathcal{G} \times \mathcal{F} \to \mathcal{F}$ such that
  \begin{enumerate}
   \item  the action $\rho(U): \mathcal{G}(U) \times \mathcal{F}(U) \to \mathcal{F}(U)$ is simply transitive provided $\mathcal{F}(U)$ is nonempty.
   \item for every $U\in ob(\mathcal{C})$ there exists a covering $\{U_{i} \to U\}_{i\in I}$ of $U$ such that $\mathcal{F}(U_{i})$ is nonempty for all $i$.
   \end{enumerate}

   A morphism of $\mathcal{G}$-torsors $\mathcal{F} \to \mathcal{F}^{'}$ is just a morphism of presheaves of sets compatible with the $\mathcal{G}$-action. A trivial $\mathcal{G}$-torsor is the presheaf $\mathcal{G}$ with obvious left $\mathcal{G}$-action. Note that a morphism between $\mathcal{G}$-torsors is always an isomorphism. Moreover, a $\mathcal{G}$-torsor $\mathcal{F}$ is trivial if and only if $\Gamma(\mathcal{C}, \mathcal{F})\neq \emptyset$ (see chapter 21 of \cite{sp}). 

 \subsection*{Relative Picard groups}The relative $\Pic ({\it f})$ is the abelian group  generated by \newline $[L_1,\alpha,L_2]$, where the $L_i$ are 
line bundles on $S$ and $\alpha:f^*L_1\to f^*L_2$ is an isomorphism.
The relations are:
\begin{enumerate}
\item $[L_1,\alpha,L_2] + [L'_1,\alpha',L'_2] = 
[L_1\otimes L'_1,\alpha\otimes\alpha',L_2\otimes L'_2]$;
\item $[L_1,\alpha,L_2] + [L_2,\beta,L_3] = [L_1,\beta\alpha,L_3]$;
\item $[L_1,\alpha,L_2]=0$ if $\alpha=f^*(\alpha_0)$ 
for some $\alpha_0:L_1\cong L_2$.
\end{enumerate}

This relative Picard group $\Pic ({\it f})$ fits into the following exact sequence 

\begin{align}\label{U-Pic}
 1 \to \mathcal{O}^{\times}(S)\to \mathcal{O}^{\times}(X) \to \Pic({\it f}) \to \Pic({ \it S}) \to \Pic({ \it X}).
\end{align} Some relevant details and basic properties can be found in \cite{SW}, \cite{SW1}.

Next, we define a group which is a slight variant of the group $\Pic ({\it f}).$

\subsection*{The group $\tilde \Pic(f)$} The relative $\tilde \Pic(f)$ is the abelian group generated by $[L_{1}, \alpha, L_{2}],$ where the $L_{i}$ are line bundles on $S$ and $\alpha: f^{*}L_{1}\otimes f^{*}L_{2}^{-1}\to f^{*}L_{1}^{-1}\otimes f^{*}L_{2}$ is an isomorphism. The relations are similar to $\Pic({\it f}).$
 
 There is a natural group homomorphism 
 \begin{align}\label{tor map}
  \Psi: \tilde \Pic(f) \to \Pic({\it f})
 \end{align}
sending $[L_{1}, \alpha, L_{2}]$ to $[L_{1}\otimes L_{2}^{-1}, \alpha, L_{1}^{-1}\otimes L_{2}].$
 
 Let $\Pic^{{\it f}}$ be the \'etale presheaf on $S_{et},$ defined as  $\Pic^{{\it f}}({\it U})=  \Pic({{\it f_{U}}}).$ Similarly, we can define the \'etale presheaf $\tilde \Pic^{f}$ on $S_{et}.$

\begin{lemma}\label{iso lemma}
 Let $V_{1}, V_{2}$ and $V_{3}$ be three locally free $\mathcal{O}_{X}$-modules of finite rank. Suppose that $\iota: V_{1} \otimes V_{3} \to V_{2} \otimes V_{3}$ is an isomorphism as an $\mathcal{O}_{X}$-module. Then there is an $\mathcal{O}_{X}$-module isomorphism $V_{1} \to V_{2}.$
\end{lemma}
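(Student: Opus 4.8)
The plan is to reduce the statement to a purely local, hence ring-theoretic, assertion. Being an isomorphism of $\mathcal{O}_X$-modules is a local property, and both the hypothesis and the conclusion are stable under restriction to an open (even étale) neighbourhood; moreover $V_1 \cong V_2$ can be checked on stalks, or — since the $V_i$ are locally free of finite rank — after passing to an affine open on which all three become free. So I would first localize: pick a point $x \in X$ and a neighbourhood $\operatorname{Spec}(R)$ (with $R$ the local ring, or a suitable affine) over which $V_1 \cong R^{a}$, $V_2 \cong R^{b}$, $V_3 \cong R^{c}$. The isomorphism $\iota$ then becomes an isomorphism $R^{ac} \cong R^{bc}$ of free modules. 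Comparing ranks at the residue field forces $ac = bc$, hence $a = b$ (as $c \geq 1$; if some $V_3$ has rank $0$ on a component the statement there is vacuous or trivial), and therefore $V_1$ and $V_2$ are locally free of the same rank at $x$.

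Knowing that $V_1$ and $V_2$ have the same rank everywhere is not yet enough to conclude they are isomorphic as $\mathcal{O}_X$-modules globally — one must produce an actual isomorphism. Here I would use the determinant: for a locally free module $V$ of rank $r$, set $\det V = \bigwedge^{r} V$, a line bundle. Taking top exterior powers of the isomorphism $\iota$ and using the standard formula $\det(V \otimes W) \cong (\det V)^{\otimes \operatorname{rk} W} \otimes (\det W)^{\otimes \operatorname{rk} V}$, one gets from $V_1 \otimes V_3 \cong V_2 \otimes V_3$ an isomorphism of line bundles
\[
(\det V_1)^{\otimes c} \otimes (\det V_3)^{\otimes a} \;\cong\; (\det V_2)^{\otimes c} \otimes (\det V_3)^{\otimes a},
\]
where $a = \operatorname{rk} V_1 = \operatorname{rk} V_2$ and $c = \operatorname{rk} V_3$, both locally constant. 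Cancelling $(\det V_3)^{\otimes a}$ (line bundles are invertible) yields $(\det V_1)^{\otimes c} \cong (\det V_2)^{\otimes c}$, i.e. $(\det V_1 \otimes (\det V_2)^{-1})^{\otimes c} \cong \mathcal{O}_X$.

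The genuinely delicate point is the passage from $(\det V_1 \otimes (\det V_2)^{-1})^{\otimes c} \cong \mathcal{O}_X$ to $\det V_1 \cong \det V_2$: a nontrivial $c$-torsion element of $\operatorname{Pic}(X)$ is a real obstruction, so I do not expect the naive determinant argument alone to close the proof, and this is the main obstacle. The way around it is to go back to the local isomorphisms and observe that over each $\operatorname{Spec}(R)$ we have $V_1|_R \cong R^a \cong V_2|_R$ \emph{canonically enough}: the point is that the local isomorphism $V_1 \cong V_2$ exists, so $V_1$ and $V_2$ are locally isomorphic vector bundles of rank $a$, but to glue these into a global isomorphism one should instead argue that $\mathcal{H}om_{\mathcal{O}_X}(V_1, V_2)$ is a locally free sheaf which, by the rank computation, is locally free of rank $a^2$ and — crucially — admits a nowhere-vanishing section. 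To produce such a section I would exploit $\iota$ directly: tensoring $\iota \in \operatorname{Isom}(V_1 \otimes V_3, V_2 \otimes V_3)$ with $V_3^{\vee}$ and contracting gives a map $V_1 \otimes V_3 \otimes V_3^{\vee} \to V_2$; composing with the coevaluation $\mathcal{O}_X \to V_3 \otimes V_3^{\vee}$ (which exists as $V_3$ is locally free, giving $V_1 \to V_1 \otimes V_3 \otimes V_3^{\vee}$) produces a canonical morphism $\varphi\colon V_1 \to V_2$, namely $\varphi = (\operatorname{tr} \otimes \operatorname{id}) \circ (\operatorname{id}_{V_1} \otimes (\text{appropriate reassembly of } \iota))$; equivalently $\varphi$ is $\tfrac{1}{c}$ times the ``partial trace'' of $\iota$ along $V_3$ — and here one should be careful, since dividing by $c$ is only legitimate if $c$ is invertible, which it is not in general. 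So I would replace that by taking $\varphi$ to be the partial trace itself (no division), check locally (where $\iota$ is a generic invertible $ac \times bc$ matrix) that the partial trace is generically invertible, and then \emph{perturb}: the locus where $\varphi$ fails to be an isomorphism is a proper closed subset, and since the original hypothesis only asserts existence of \emph{some} $\iota$, I may replace $\iota$ by $\iota$ composed with a suitable automorphism of $V_2 \otimes V_3$ of the form $\operatorname{id}_{V_2} \otimes g$ or by a linear combination, arranging the partial trace to be an isomorphism everywhere. This last perturbation step — showing that the automorphisms of $V_2 \otimes V_3$ act with large enough orbit on $\operatorname{Hom}(V_1 \otimes V_3, V_2 \otimes V_3)$ to move the partial trace off the degeneracy locus at every point simultaneously — is the technical heart of the argument, and I would handle it by a Bertini-type or general-position argument over the (affine, noetherian) base, or more cleanly by the observation that locally the partial-trace map $M_{ac \times bc} \to M_{a \times b}$ is surjective with the preimage of $\mathrm{GL}$ dense, so local sections of $\mathcal{I}som(V_1 \otimes V_3, V_2 \otimes V_3)$ mapping into $\mathcal{I}som(V_1, V_2)$ exist, and these patch because $\mathcal{I}som(V_1, V_2)$ is (once nonempty locally everywhere) a torsor under $\mathcal{A}ut(V_2)$ and the obstruction to a global section lies in a cohomology group that is killed precisely by the existence of $\iota$.
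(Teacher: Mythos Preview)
Your proposal never closes. You correctly see that matching ranks is not enough and that the determinant argument stalls on $c$-torsion in $\operatorname{Pic}(X)$; but from that point on the argument becomes speculative. The partial-trace map you build need not be an isomorphism, the ``divide by $c$'' fix is illegitimate, and the perturbation/Bertini step (``move $\iota$ by automorphisms of $V_2\otimes V_3$ so that the partial trace avoids the degeneracy locus everywhere'') is not something you can carry out on an arbitrary scheme: there is no reason a global section of $\mathcal{A}ut(V_2\otimes V_3)$ with the required behaviour exists. Your last sentence, that the obstruction to gluing ``is killed precisely by the existence of $\iota$'', is an assertion, not an argument.

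The paper does something entirely different and much shorter: it cancels $V_3$ by Morita theory rather than by rank/determinant bookkeeping. For any locally free $\mathcal{F}$ of finite rank one has $\mathcal{F}\otimes_{End(\mathcal{F})}\mathcal{F}^{\vee}\cong\mathcal{O}_X$, so applying $(-)\otimes_{End(V_3)}V_3^{\vee}$ to $\iota$ yields in one line
\[
V_1 \;\cong\; (V_1\otimes V_3)\otimes_{End(V_3)}V_3^{\vee}
\;\xrightarrow{\ \iota\otimes\mathrm{id}\ }\;
(V_2\otimes V_3)\otimes_{End(V_3)}V_3^{\vee}\;\cong\; V_2.
\]
The point you missed is that $V_3$ is not invertible in the $\mathcal{O}_X$-linear world (which is exactly the obstruction you kept running into), but it \emph{is} invertible once you tensor over $End(V_3)$; the paper exploits this directly and never needs to localize, take determinants, or perturb anything.
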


\begin{proof}
 We know that $\mathcal{F}\otimes_{End(\mathcal{F})} \mathcal{F}^{\vee}\cong \mathcal{O}_{X}$ for any locally free $\mathcal{O}_{X}$-module $\mathcal{F}$ of finite rank. Then we get 
 $V_{1} \stackrel{\cong}\to  V_{1}\otimes V_{3}\otimes_{End(V_{3})} V_{3}^{\vee} \stackrel{\iota\otimes id}\to V_{2}\otimes V_{3}\otimes_{End(V_{3})} V_{3}^{\vee}\stackrel{\cong}\to V_{2}.$\end{proof}

\begin{lemma}\label{line element}
Let $A:=(A_{1}, \alpha, A_{2})$ be in $\tilde Az(f^{*})$(or in $ Az(f^{*})$). Let $((E_{1}, \tau_{1}), \nu, (E_{2}, \tau_{2}) ),$ $((E_{1}^{'}, \tau_{1}^{'}), \nu^{'}, (E_{2}^{'}, \tau_{2}^{'}))$ be any two objects  in $G_{A}(U),$ where $U\in ob( S_{et}).$ Then
\begin{enumerate}
 \item we can construct a unique element of $\tilde \Pic^{f}(U).$ Call it $[L_{1}, \beta, L_{2}].$
 \item $((L_{1}\otimes E_{1}, \tau_1 a_1), \beta\otimes \nu, (L_{2}\otimes E_{2}, \tau_2 a_2))$ defines an object in $G_{A}(U),$ where $a_{k}: End(E_{k}\otimes L_{k})\cong End(E_{k})$ for $k=1, 2.$ Moreover, $((L_{1}\otimes E_{1}, \tau_1 a_1), \beta\otimes \nu, (L_{2}\otimes E_{2}, \tau_2 a_2))$ is isomorphic to $((E_{1}^{'}, \tau_{1}^{'}), \nu^{'}, (E_{2}^{'}, \tau_{2}^{'}))$ in $G_{A}(U).$ 
\end{enumerate}
\end{lemma}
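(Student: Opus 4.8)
The plan is to read the two assertions together as the simply-transitive part of an action of $\tilde{\Pic}^{f}$ on the presheaf $[G_{A}]$: statement (1) attaches to an ordered pair of objects of $G_{A}(U)$ their ``difference'' in $\tilde{\Pic}^{f}(U)=\tilde{\Pic}(f_{U})$, and statement (2) says that translating the first object by this difference yields an object isomorphic to the second. The proof is therefore a construction followed by a verification, and I would build the difference class in two steps: first from the $F_{A_{i}}$-legs of the two triples, and then from their gluing data $\nu,\nu'$.

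For the construction, note that since $(E_{1},\tau_{1}),(E_{2},\tau_{2})$ and $(E_{1}',\tau_{1}'),(E_{2}',\tau_{2}')$ both lie in the relevant $F_{A_{i}}(U)$, the isomorphisms $\tau_{i},\tau_{i}'$ make $E_{i}$ and $E_{i}'$ into $j^{*}A_{i}$-modules of the same rank; put $L_{i}:=\mathcal{H}om_{j^{*}A_{i}}(E_{i},E_{i}')$, which is an invertible $\mathcal{O}_{U}$-module, and let $\phi_{i}\colon E_{i}\otimes L_{i}\xrightarrow{\ \sim\ }E_{i}'$ be the evaluation isomorphism. It is $j^{*}A_{i}$-linear, hence the conjugation map $\operatorname{End}(E_{i}')\to\operatorname{End}(E_{i}\otimes L_{i})\xrightarrow{\ a_{i}\ }\operatorname{End}(E_{i})$ followed by $\tau_{i}$ equals $\tau_{i}'$; that is, $\phi_{i}$ is an isomorphism $(E_{i}\otimes L_{i},\tau_{i}a_{i})\xrightarrow{\sim}(E_{i}',\tau_{i}')$ in $F_{A_{i}}(U)$. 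Applying $f_{U}^{*}$ and using $\phi_{1},\phi_{2}$ to replace $f_{U}^{*}E_{i}'$ by $f_{U}^{*}L_{i}\otimes f_{U}^{*}E_{i}$ turns $\nu'$ into an isomorphism
$$\big(f_{U}^{*}L_{1}\otimes f_{U}^{*}L_{2}^{-1}\big)\otimes M\ \xrightarrow{\ \sim\ }\ \big(f_{U}^{*}L_{1}^{-1}\otimes f_{U}^{*}L_{2}\big)\otimes N,$$
where $M:=f_{U}^{*}E_{1}\otimes(f_{U}^{*}E_{2})^{\vee}$ and $N:=(f_{U}^{*}E_{1})^{\vee}\otimes f_{U}^{*}E_{2}$; composing with $\operatorname{id}\otimes\nu^{-1}$ on the $M$--$N$ factor produces an isomorphism $(f_{U}^{*}L_{1}\otimes f_{U}^{*}L_{2}^{-1})\otimes M\xrightarrow{\sim}(f_{U}^{*}L_{1}^{-1}\otimes f_{U}^{*}L_{2})\otimes M$, and Lemma \ref{iso lemma} applied with $V_{1}=f_{U}^{*}L_{1}\otimes f_{U}^{*}L_{2}^{-1}$, $V_{2}=f_{U}^{*}L_{1}^{-1}\otimes f_{U}^{*}L_{2}$, $V_{3}=M$ extracts an isomorphism $\beta\colon f_{U}^{*}L_{1}\otimes f_{U}^{*}L_{2}^{-1}\xrightarrow{\ \sim\ }f_{U}^{*}L_{1}^{-1}\otimes f_{U}^{*}L_{2}$. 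The element of (1) is then $[L_{1},\beta,L_{2}]\in\tilde{\Pic}(f_{U})$.

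For the verification: the ``uniqueness'' in (1) means independence of the choices. Here $L_{i}$ is unique up to a unique isomorphism, and $\phi_{i}$ (equivalently, the representatives chosen for the input objects inside their isomorphism classes) is unique up to a unit of $\mathcal{O}^{\times}(U)$; tracking such a rescaling through the construction multiplies $\beta$ by $f_{U}^{*}(c)$ with $c\in\mathcal{O}^{\times}(U)$, which by the defining relations of $\tilde{\Pic}(f_{U})$ together with the exact sequence of the type (\ref{U-Pic}) leaves $[L_{1},\beta,L_{2}]$ unchanged. For (2): that $(L_{k}\otimes E_{k},\tau_{k}a_{k})\in F_{A_{k}}(U)$ is immediate, and the identifications $f_{U}^{*}(L_{1}\otimes E_{1})\otimes(f_{U}^{*}(L_{2}\otimes E_{2}))^{\vee}\cong(f_{U}^{*}L_{1}\otimes f_{U}^{*}L_{2}^{-1})\otimes M$, together with its analogue for the target, show that $\beta\otimes\nu$ has exactly the source and target demanded of a gluing datum, so the triple is an object of $G_{A}(U)$. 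Finally $(\phi_{1},\phi_{2})$ is a morphism in $G_{A}(U)$ from this triple to $((E_{1}',\tau_{1}'),\nu',(E_{2}',\tau_{2}'))$: the two squares of type (\ref{commutativity}) commute by the construction of the $L_{i}$, and the square (\ref{comm2}) commutes because $\beta$ was extracted precisely so that $\beta\otimes\nu$ is the transport of $\nu'$ across $\phi_{1},\phi_{2}$. Since every morphism of $G_{A}(U)$ is an isomorphism, (2) follows.

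The main obstacle I anticipate is making the commutativity of (\ref{comm2}) airtight, i.e.\ verifying that the $\beta$ produced by Lemma \ref{iso lemma} really does reassemble, after tensoring with $\nu$, to the transported $\nu'$. This requires careful bookkeeping of the canonical ``cancellation'' isomorphisms $\mathcal{F}\otimes_{\operatorname{End}(\mathcal{F})}\mathcal{F}^{\vee}\cong\mathcal{O}$ used inside Lemma \ref{iso lemma}, together with the various compatibility isomorphisms for duals and tensor products; wherever the bookkeeping leaves a residual unit of $\mathcal{O}^{\times}(U)$ one absorbs it by rescaling $\phi_{1},\phi_{2}$, which is legitimate by the independence established for (1). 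A secondary point, not strictly part of this lemma but relevant afterwards, is that the assignments $(E_{i},\tau_{i})\mapsto L_{i}$ and $(\nu,\nu')\mapsto\beta$ must be natural in $U$, so that they later assemble into the presheaf-torsor structure on $[G_{A}]$.
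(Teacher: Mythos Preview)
Your proposal is correct and follows essentially the same route as the paper: you build $L_{i}$ from the pair $(E_{i},\tau_{i}),(E_{i}',\tau_{i}')$ in $F_{A_{i}}(U)$ (the paper writes $L_{k}=E_{k}'\otimes_{End(E_{k})}E_{k}^{\vee}$, which is your $\mathcal{H}om_{j^{*}A_{k}}(E_{k},E_{k}')$), then extract $\beta$ via Lemma~\ref{iso lemma} from the transported $\nu'$ composed with $\nu^{-1}$, and use your $\phi_{i}$ (the paper's $v_{k}$) as the isomorphism in $G_{A}(U)$. The obstacle you flag is exactly where the paper spends its effort: it proves the precise identity $\beta\otimes\nu=\delta$ (with $\delta$ the transport of $\nu'$) by an explicit diagram chase through the cancellation isomorphisms in Lemma~\ref{iso lemma}, so no residual unit actually appears and no rescaling of $\phi_{i}$ is needed.
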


\begin{proof}
For simplicity, we write $E(E_{k})$ instead of $End(E_{k})$ throughout the proof.

 (1)  By definition,  for $k=1, 2$, we have $$ \tau_{k}: E(E_{k})\cong j^{*}A_{k},  \tau_{k}^{'}: E(E_{k}^{'})\cong j^{*}A_{k},$$ where $j$ denotes the \'etale map $U \to S.$ Then $\tau_{k}^{'-1} \tau_{k}: E(E_{k})\cong E(E_{k}^{'}).$ So, we can find unique invertible sheaves  $L_{k}= E_{k}^{'}\otimes_{E(E_{k})} E_{k}^{\vee}$ such that (see Lemma 4.3 of \cite{VNR})
 $$v_{k}: L_{k}\otimes_{\mathcal{O}_{U}} E_{k}=(E_{k}^{'}\otimes_{E(E_{k})} E_{k}^{\vee})\otimes_{\mathcal{O}_{U}} E_{k}\cong  E_{k}^{'}\otimes_{E(E_{k})} (E_{k}^{\vee}\otimes_{\mathcal{O}_{U}} E_{k})\cong E_{k}^{'}.$$ Note that $v_{k}$'s are $( E(E_{k}), \mathcal{O}_{U})$-linear isomorphisms. By using $\nu, \nu^{'}$ and $v_{k},$ we obtain an $\mathcal{O}_{X \times_{S} U}$-module isomorphism 
 
 \begin{align*}
    f_{U}^{*}L_{1}\otimes  f_{U}^{*}L_{2}^{-1}\otimes  f_{U}^{*}E_{1}\otimes ( f_{U}^{*}E_{2})^{\vee}\cong  f_{U}^{*}E_{1}^{'}\otimes ( f_{U}^{*}E_{2}^{'})^{\vee} \cong ( f_{U}^{*}E_{1})^{' \vee}\otimes  f_{U}^{*}E_{2}^{'} \\\cong  f_{U}^{*}L_{1}^{-1}\otimes  f_{U}^{*}L_{2}\otimes ( f_{U}^{*}E_{1})^{\vee} \otimes  f_{U}^{*}E_{2}\cong  f_{U}^{*}L_{1}^{-1}\otimes  f_{U}^{*}L_{2}\otimes  f_{U}^{*}E_{1}\otimes ( f_{U}^{*}E_{2})^{\vee}.
 \end{align*}
 
 Set $$L= f_{U}^{*}L_{1}\otimes  f_{U}^{*}L_{2}^{-1},    L^{\vee}= f_{U}^{*}L_{1}^{-1}\otimes  f_{U}^{*}L_{2},$$
     $$E= f_{U}^{*}E_{1}\otimes ( f_{U}^{*}E_{2})^{\vee},  E^{\vee}= ( f_{U}^{*}E_{1})^{\vee} \otimes  f_{U}^{*}E_{2},$$
     $$E^{'}=  f_{U}^{*}E_{1}^{'}\otimes ( f_{U}^{*}E_{2}^{'})^{\vee}, (E^{'})^{\vee}= ( f_{U}^{*}E_{1}^{'})^{ \vee}\otimes  f_{U}^{*}E_{2}^{'}.$$ Thus, we have 
     \begin{align}\label{comm3}
      L\otimes E \stackrel{\Gamma}\to E^{'} \stackrel{\nu^{'}}\to (E^{'})^{\vee} \stackrel{\Gamma^{\vee}}\to L^{\vee}\otimes  E^{\vee} \stackrel{1_{L^{\vee}}\otimes \nu^{-1}} \to L^{\vee}\otimes  E, 
     \end{align} where $1_{L^{\vee}}$ denotes the identity map on $L^{\vee}.$
 
Now the Lemma \ref{iso lemma} implies that there is an $\mathcal{O}_{X\times_{S} U}$-module isomorphism $\beta: L \cong L^{\vee}.$ More precisely, we get $\beta$ as follows: 
\begin{equation}\label{map}
 L \stackrel{a}\to L\otimes E \otimes_{E(E)} E^{\vee} \stackrel{ \delta\otimes 1_{E^{\vee}}}\longrightarrow L^{\vee}\otimes  E^{\vee}\otimes_{E(E)} E^{\vee} \stackrel{1_{L^{\vee}}\otimes \nu^{-1}\otimes 1_{E^{\vee}}}\longrightarrow L^{\vee}\otimes  E \otimes_{E(E)} E^{\vee} \stackrel{b}\to L^{\vee}
\end{equation}

 Here $\delta= \Gamma^{\vee} \nu^{'} \Gamma$. 
  Therefore, $[L_{1}, \beta, L_{2}] \in \Tilde \Pic^{f}(U).$ This proves (1).

(2) Note that $\beta\otimes \nu$ gives an isomorphism between $L\otimes E$ and $L^{\vee}\otimes E^{\vee}.$  Hence, clearly $((L_{1}\otimes E_{1}, \tau_1 a_1), \beta\otimes \nu, (L_{2}\otimes E_{2}, \tau_2 a_2))$ defines an object in $G_{A}(U),$ where $a_{k}: E(E_{k}\otimes L_{k})\cong E(E_{k})$ for $k=1, 2.$

Next, we shall show that $(v_{1}, v_{2})$ defines an isomorphism between $((L_{1}\otimes E_{1}, \tau_1 a_1), \beta\otimes \nu, (L_{2}\otimes E_{2}, \tau_2 a_2))$ and $((E_{1}^{'}, \tau_{1}^{'}), \nu^{'}, (E_{2}^{'}, \tau_{2}^{'}))$ in $G_{A}(U).$ To prove $(v_{1}, v_{2})$ is an isomorphism in $G_{A}(U)$, we have to check that the diagrams like (\ref{commutativity}) and (\ref{comm2}) commute. To check these commutativity, we may assume that $U$ is affine.

We first check that the following diagram 
    $$ \xymatrix{
 E(E_{1}^{'}) \ar[rd]^{\tau_{1}^{'}} \ar[r]^{E(v_{1})} & E(L_{1}\otimes E_{1}) \ar[d]^{\tau_{1}a_{1}} \\
 &  j^{*}A_{1}} $$ commutes. 
 
 For $s\in E(E_{1}^{'}),$ we have $E(v_{1})(s)= v_{1}^{-1}sv_{1}.$ Note that $E(v_{1})^{-1}= E(v_{1}^{-1}).$ Let $\tau= \tau_{1}^{'-1} \tau_{1}.$ So, it suffices to check that $ E(v_{1}^{-1})a_{1}^{-1}= \tau$, i.e., $ (E(v_{1}^{-1})a_{1}^{-1})(t)= \tau(t)$ for all $t\in E(E_{1}).$ We can write $a_{1}^{-1}(t)= 1_{L_{1}}\otimes t.$ Moreover, $v_{1}$ is $E(E_{1})$-linear map and $E_{1}^{'}$ is $E(E_{1})$-module via $\tau.$ Using these facts, we get $((E(v_{1}^{-1})a_{1}^{-1})(t))(e^{'})=\tau(t)(e^{'})$ for all $e^{'}\in E_{1}^{'}.$ Hence the assertion.
 
 Further, we show that the diagram like (\ref{comm2}) commutes. By (\ref{comm3}), we have the following commutative diagram 
 $$\begin{CD}
   L\otimes E @>{\Gamma}>> E^{'} \\
   @VV\delta V        @VV \nu^{'}V \\
   L^{\vee}\otimes  E^{\vee} @> (\Gamma^{\vee})^{-1}>> (E^{'})^{\vee}.
 \end{CD}$$ We claim that $\beta\otimes \nu= \delta.$ Recall that $\beta= b( 1_{L^{\vee}}\otimes \nu^{-1}\otimes 1_{E^{\vee}})(\delta\otimes 1_{E^{\vee}})a.$ Then $\beta\otimes \nu=  (b\otimes \nu)( 1_{L^{\vee}}\otimes \nu^{-1}\otimes 1_{E^{\vee}} \otimes 1_{E}) (\delta \otimes 1_{E^{\vee}} \otimes 1_{E})( a\otimes 1_{E}).$ We get the following commutative diagram
$$\begin{CD}
   L\otimes E @>{\Gamma_{1}}>> (L\otimes E)\otimes_{E(E)} E(E) @> m >> L\otimes E @>{\Gamma}>> E^{'} \\
   @VV\beta\otimes \nu V     @VV \Gamma_{2} V             @VV\delta V   @VV \nu^{'}V \\
   L^{\vee}\otimes  E^{\vee} @> \Gamma_{3}^{-1} >>  (L^{\vee}\otimes  E^{\vee})\otimes_{E(E)} E(E) @> n >>  L^{\vee}\otimes  E^{\vee} @> (\Gamma^{\vee})^{-1}>> (E^{'})^{\vee}.
  \end{CD}$$
  Here $\Gamma_{1}= a\otimes 1_{E},$ $\Gamma_{2}= \delta \otimes 1_{E^{\vee}} \otimes 1_{E},$  $\Gamma_{3}= (b\otimes \nu).( 1_{L^{\vee}}\otimes \nu^{-1}\otimes 1_{E^{\vee}} \otimes 1_{E})$ and $m, n$ are natural isomorphisms sending $x\otimes \tilde e$ to $x \tilde e$ for all $\tilde e \in E(E).$ One can easily check that $m\Gamma_{1}$ and $n\Gamma_{3}^{-1} $ both are identity isomorphisms. Hence $\beta\otimes \nu= \delta.$ This completes the proof. \end{proof}

Given an \'etale map $U \to S,$  we define a group action $\rho(U): \Tilde \Pic^{f}(U) \times [G_{A}](U) \to [G_{A}](U)$  by 
 $$([L_{1}, \beta, L_{2}], [((E_{1}, \tau_{1}), \nu, (E_{2}, \tau_{2}))])\mapsto [((L_{1}\otimes E_{1}, a_1\tau_1), \beta\otimes \nu, (L_{2}\otimes E_{2}, a_2\tau_2)].$$
Here $L_{1}, L_{2}$ are line bundles on $U$ with an isomorphism  $\beta:  f_{U}^{*}L_{1}\otimes  f_{U}^{*}L_{2}^{-1}\cong  f_{U}^{*}L_{1}^{-1}\otimes  f_{U}^{*}L_{2}$ and $a_{k}: End(E_{k}\otimes L_{k})\cong End(E_{k})$ for $k=1,2.$

\begin{proposition}\label{pic-torsor}
Let $A:=(A_{1}, \alpha, A_{2})$ be in $\tilde Az(f^{*})$(or in $ Az(f^{*})$). Assume that $S$ is connected. Then $[G_{A}]$ is a $\Tilde \Pic^{f}$-torsor.
\end{proposition}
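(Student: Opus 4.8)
The plan is to check the two defining properties of a presheaf torsor for $[G_A]$ equipped with the action $\rho$ introduced just before the statement: first, that for every $U\in S_{et}$ with $[G_A](U)\neq\emptyset$ the map $\rho(U)\colon\tilde\Pic^{f}(U)\times[G_A](U)\to[G_A](U)$ is simply transitive, and second, that $[G_A]$ is locally nonempty, i.e. every $U$ admits an \'etale cover $\{U_i\to U\}$ with $G_A(U_i)\neq\emptyset$. Along the way one must also record that $\rho$ genuinely is an action of the presheaf of groups $\tilde\Pic^{f}$: that the neutral element acts trivially, that $\rho$ is compatible with composition and with restriction maps, and that the displayed formula is independent of the chosen representative of the class in $[G_A](U)$ and descends from generators to the group $\tilde\Pic^{f}(U)$. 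This last point is a manipulation of tensor products and of the canonical isomorphisms $a_k\colon End(E_k\otimes L_k)\cong End(E_k)$, checked against the three relations defining $\tilde\Pic(f_U)$; the uniqueness assertion in Lemma \ref{line element}(1) is exactly what makes $\rho$ well defined.

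For local nonemptiness I would argue as follows. Fix $U\in S_{et}$. Since $A_1$ and $A_2$ are Azumaya algebras on $X$ they are split by an \'etale cover, and refining Zariski-locally one obtains an \'etale cover $\{U_i\to U\}$ over which $f_{U_i}^{*}A_j\cong End(\mathcal{O}_{X\times_S U_i}^{\,n_j})$ for $j=1,2$, where $n_j$ is the degree of $A_j$; connectedness of $S$ enters here, guaranteeing that $n_1,n_2$ are globally constant. Taking $E_j=\mathcal{O}_{U_i}^{\,n_j}$, the $\mathcal{O}_{X\times_S U_i}$-modules $f_{U_i}^{*}E_1\otimes(f_{U_i}^{*}E_2)^{\vee}$ and $(f_{U_i}^{*}E_1)^{\vee}\otimes f_{U_i}^{*}E_2$ are free of the same rank $n_1n_2$, so an isomorphism $\nu$ exists and $G_A(U_i)\neq\emptyset$.

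Transitivity of $\rho(U)$ on $[G_A](U)$ is then immediate from Lemma \ref{line element}: given two objects of $G_A(U)$, part (1) produces an element $[L_1,\beta,L_2]\in\tilde\Pic^{f}(U)$ and part (2) says that acting by it on the first object yields something isomorphic to the second. The remaining point — and the one I expect to be the main obstacle — is \emph{freeness}, that the stabilizer of each isomorphism class is trivial. Suppose $[L_1,\beta,L_2]\in\tilde\Pic^{f}(U)$ fixes $[((E_1,\tau_1),\nu,(E_2,\tau_2))]$, witnessed by a $G_A$-isomorphism $(g_1,g_2)$ from $((L_1\otimes E_1,\tau_1a_1),\beta\otimes\nu,(L_2\otimes E_2,\tau_2a_2))$ onto $((E_1,\tau_1),\nu,(E_2,\tau_2))$. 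Reducing to the affine case as in the proof of Lemma \ref{line element}, commutativity of the $F_{A_k}$-diagram (\ref{commutativity}) forces $g_k\colon L_k\otimes E_k\to E_k$ to induce, via $a_k$, the identity of $End(E_k)$; since $E_k$ is a rank-one module over its own endomorphism algebra (the Morita/trace computation, cf. Lemma 4.3 of \cite{VNR}), this pins $g_k$ down, up to a scalar unit, to a global isomorphism $h_k\colon L_k\xrightarrow{\cong}\mathcal{O}_U$. Feeding $h_1,h_2$ into the compatibility diagram (\ref{comm2}) for $\nu$ then exhibits $\beta$ as $f_U^{*}$ of an isomorphism $L_1\otimes L_2^{-1}\cong L_1^{-1}\otimes L_2$ on $U$, whence $[L_1,\beta,L_2]=0$ in $\tilde\Pic^{f}(U)$ by relation (3). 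The delicate part throughout is carrying out these two diagram chases carefully, tracking duals, pullbacks along $f_U$, and the identifications $a_k$; once simple transitivity and local nonemptiness are in hand, $[G_A]$ is a $\tilde\Pic^{f}$-torsor.
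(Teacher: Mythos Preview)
Your approach is the same as the paper's: verify (i) local nonemptiness via an \'etale splitting of $A_1,A_2$, and (ii) simple transitivity by appealing to Lemma \ref{line element}. Two remarks.

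First, a slip in your local nonemptiness step: $A_1$ and $A_2$ are Azumaya algebras on $S$, not on $X$ (they are the two outer entries of an object of $\tilde{Az}(f^{*})$). Accordingly, the splitting you want is over $U_i$, not over $X\times_S U_i$: one needs $\tau_k\colon End(\mathcal{O}_{U_i}^{\,n_k})\cong (j b_i)^{*}A_k$ as $\mathcal{O}_{U_i}$-algebras, exactly as the paper writes using Proposition IV.2.1(d) of \cite{Milne}. Your choice $E_k=\mathcal{O}_{U_i}^{\,n_k}$ and the obvious $\nu$ then give an object of $G_A(U_i)$, so the conclusion stands; only the location of the splitting needs correcting.

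Second, the paper handles freeness more economically than you do. It reads ``simply transitive'' directly off Lemma \ref{line element}: part (1) asserts that the element $[L_1,\beta,L_2]\in\tilde\Pic^{f}(U)$ is \emph{unique}, and part (2) says that acting by it carries the first class to the second. Thus existence and uniqueness of the acting element are already packaged in the lemma, and no separate stabilizer computation is required. Your Morita argument for freeness is not wrong---it is essentially a re-derivation of the uniqueness of $L_k$ via Lemma 4.3 of \cite{VNR}, which is precisely what underlies the uniqueness claim in Lemma \ref{line element}(1)---but it duplicates work already done. If you want to keep your argument self-contained, you could note that your stabilizer analysis is exactly the special case of Lemma \ref{line element}(1) where the two objects coincide.
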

\begin{proof}
 We need to show that the following are true:
 \begin{enumerate}
 \item the action $\rho(U)$ is simply transitive,
 \item for every $U\in ob(S_{et})$ there exist a \'etale covering $\{U_{i} \to U\}$ such that $[G_{A}](U_{i})\neq \emptyset$ for all $i.$
\end{enumerate}

Let $[(E_{1}, \tau_{1}), \nu, (E_{2}, \tau_{2}) ],$ $[(E_{1}^{'}, \tau_{1}^{'}), \nu^{'}, (E_{2}^{'}, \tau_{2}^{'})]\in [G_{A}](U).$  Then by Lemma \ref{line element}, we can construct a unique $[L_{1}, \beta, L_{2}]\in \Tilde \Pic^{f}(U)$ such that $$\rho(U)([L_{1}, \beta, L_{2}], [(E_{1}, \tau_{1}), \nu, (E_{2}, \tau_{2}) ])= [(E_{1}^{'}, \tau_{1}^{'}), \nu^{'}, (E_{2}^{'}, \tau_{2}^{'})] .$$ This proves (1).

Since $S$ is connected, $A_{1}$ and $A_{2}$ have constant rank say $n^2$ and $k^2.$ Let  $j: U \to S$ be an \'etale map. Then $j^{*}A_{1}$ and $j^{*}A_{2}$ are Azumaya algebras on $U$ of rank $n^2$ and $k^2.$ By Proposition IV.2.1(d) of \cite{Milne}, there exist an \'etale covering $\{b_{i}:U_{i} \to U\}$ such that $$\tau_{1}: M_{n}(\mathcal{O}_{U_{i}})\cong b_{i}^{*}j^{*}A_{1}$$
                      $$\tau_{2}: M_{k}(\mathcal{O}_{U_{i}})\cong b_{i}^{*}j^{*}A_{2}$$ for each $i.$ This implies that $[(\mathcal{O}_{U_{i}}^{\oplus n}, \tau_{1}), \nu, (\mathcal{O}_{U_{i}}^{\oplus k}, \tau_{2})]\in [G_{A}](U_{i})$ for all $i.$ Here $\nu$ is the obvious isomorphism.  Hence  $[G_{A}](U_{i})\neq \emptyset$ for all $i.$ This proves (2). \end{proof}

    \begin{remark}\label{connectedness}{\rm
     Note that the action $\rho(U)$ is simply transitive without the assumption $S$ being connected.}
    \end{remark}

 \begin{lemma}\label{welldefineness}
      Let $\psi: A:=(A_{1}, \alpha, A_{2}) \to B:= (B_{1}, \beta, B_{2})$ be a morphism in $\Tilde Az(f^{*}).$ Then the induced map $[G_{A}] \to [G_{B}]$ is an isomorphism as $\Tilde \Pic^{f}$-torsors.
    \end{lemma}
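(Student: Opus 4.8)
The plan is to unwind the definition of the induced map and check it respects both the presheaf-of-sets structure and the $\tilde\Pic^f$-action, so that it becomes a morphism of $\tilde\Pic^f$-torsors; once that is done, the last sentence of the "Presheaf torsors" subsection (a morphism of torsors is automatically an isomorphism) closes the argument. First I would describe the map: a morphism $\psi=(u,v):(A_1,\alpha,A_2)\to(B_1,\beta,B_2)$ in $\tilde Az(f^*)$ consists of morphisms $u:A_1\to B_1$, $v:A_2\to B_2$ in $\tilde Az(S)$ with $\beta\, f^*u = f^*v\,\alpha$. Recall $u=[(P_1,w_1,Q_1)]$ with $w_1: A_1\otimes End(P_1)\cong B_1\otimes End(Q_1)$, and similarly for $v$. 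Given an \'etale $j:U\to S$ and an object $((E_1,\tau_1),\nu,(E_2,\tau_2))\in G_A(U)$, I would send it to $((E_1\otimes j^*P_1,\ \tau_1'),\ \nu\otimes(\text{obvious iso}),\ (E_2\otimes j^*Q_2,\ \tau_2'))$, no — more precisely one tensors $E_i$ with the appropriate $j^*P_i$ and $j^*Q_i$ so that the trivialization $\tau_i:End(E_i)\cong j^*A_i$ is converted, via $w_i$, into a trivialization of $j^*B_i$; this is the standard "transport of structure along a Morita equivalence" and is exactly the kind of construction already used in Lemma \ref{line element}(2) and in the composition rule for $Az(X)$. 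I would note (citing Remark \ref{equi}) that the choice of the isomorphisms $w_i$ is irrelevant up to the equivalence relation, so the assignment on isomorphism classes is well defined, and functoriality in $U$ (compatibility with pullback restriction maps) is immediate since everything is built from pullbacks and tensor products.

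Next I would verify $\tilde\Pic^f$-equivariance. Given $[L_1,\gamma,L_2]\in\tilde\Pic^f(U)$, the action replaces $(E_1,E_2)$ by $(L_1\otimes E_1, L_2\otimes E_2)$ and composes $\nu$ with $\gamma$; since the induced map tensors on the $P_i,Q_i$ factors, which commute with tensoring on the $L_i$ factors, the two operations commute, i.e. the induced map intertwines $\rho(U)$ on $[G_A]$ and on $[G_B]$. This is a routine but slightly bookkeeping-heavy check of commuting tensor factors plus the associativity/commutativity constraints of $\otimes$, together with the compatibility $\beta\,f^*u=f^*v\,\alpha$, which is what guarantees the isomorphism $\nu$ transports correctly between the two $\nu$-constraints in $G_B$. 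I would treat this as the technical heart of the proof and present it diagrammatically, reducing (as in Lemma \ref{line element}) to the affine case where one can argue on modules.

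The main obstacle I expect is not any deep idea but the precise matching of the dual-tensor isomorphism $\nu$: in $G_A(U)$ the datum $\nu$ is an isomorphism $f_U^*E_1\otimes(f_U^*E_2)^\vee\cong(f_U^*E_1)^\vee\otimes f_U^*E_2$, and after tensoring $E_1$ with $j^*P_1$ and $E_2$ with $j^*Q_2$ one must produce the corresponding isomorphism for $G_B(U)$ using $f^*u$ and $f^*v$ — here the hypothesis $\beta\,f^*u=f^*v\,\alpha$ is exactly the compatibility needed, but spelling out that the resulting square (the analogue of \eqref{comm2}) commutes requires care with the $(-)^\vee$ operation and with which factor each $End(P_i)$, $End(Q_i)$ lands in. Once equivariance and well-definedness are in hand, the conclusion that $[G_A]\to[G_B]$ is an isomorphism of $\tilde\Pic^f$-torsors is automatic, since any morphism of torsors over a presheaf of groups is an isomorphism.
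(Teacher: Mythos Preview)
Your plan has a genuine gap at the very first step: the ``direct'' map you sketch does not land in $G_B(U)$. If $u=[(P_1,w_1,Q_1)]$ with $w_1:A_1\otimes End(P_1)\cong B_1\otimes End(Q_1)$, then tensoring $E_1$ by $j^*P_1$ yields
\[
End(E_1\otimes j^*P_1)\cong End(E_1)\otimes j^*End(P_1)\stackrel{\tau_1}{\cong} j^*A_1\otimes j^*End(P_1)\stackrel{j^*w_1}{\cong} j^*\bigl(B_1\otimes End(Q_1)\bigr),
\]
which is a trivialization of $j^*(B_1\otimes End(Q_1))$, \emph{not} of $j^*B_1$. There is no ``appropriate combination of $j^*P_1$ and $j^*Q_1$'' you can tensor with to strip off the extra $End(Q_1)$: you would need to divide, and that is not a tensor operation on locally free modules. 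So your object lives in $F_{B_1\otimes End(Q_1)}(U)$, not in $F_{B_1}(U)$, and the map you describe is not a map of presheaves $[G_A]\to[G_B]$. Consequently the final appeal to ``a morphism of torsors is automatically an isomorphism'' has nothing to act on.

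The paper handles exactly this obstruction by \emph{not} attempting a direct map. It introduces intermediate objects
\[
A'=\bigl(A_1\otimes End(P_1),\ \alpha',\ A_2\otimes End(R_1)\bigr),\qquad B'=\bigl(B_1\otimes End(Q_1),\ \beta',\ B_2\otimes End(T_1)\bigr)
\]
together with morphisms $\phi_1:A\to A'$ and $\phi_2:B\to B'$ in $\tilde{Az}(f^*)$ of the special shape $[(P_1,\mathcal O_S)]$, $[(R_1,\mathcal O_S)]$, etc. For these, the tensoring recipe \emph{does} land in the right place (this is where self-duality of $P_1,R_1$ is used to build $\nu'$), giving $\tilde\Pic^f$-equivariant maps $[G_A]\to[G_{A'}]$ and $[G_B]\to[G_{B'}]$ that are bijections by simple transitivity (Remark~\ref{connectedness}). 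The compatibility $\beta\,f^*u=f^*v\,\alpha$ is then used, not to transport $\nu$ directly, but to show $\alpha'\sim\beta'$, so that $A'\to B'$ is the trivial morphism $([(\mathcal O_S,\mathcal O_S)],[(\mathcal O_S,\mathcal O_S)])$ and $[G_{A'}]\cong[G_{B'}]$ is immediate. The isomorphism $[G_A]\cong[G_B]$ is the composite. Your outline is missing this factorization idea, and without it the construction does not go through.
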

    
    \begin{proof}
    Since $\alpha$ and $\beta$ are morphisms in $\tilde Az(X),$  $\alpha = [(P, u, Q)]$ and $\beta= [(R, v, T)].$ We have $u: f^{*}A_{1} \otimes End(P)\cong f^{*}A_{2} \otimes End(Q)$ and $v: f^{*}B_{1} \otimes End(R)\cong f^{*}B_{2} \otimes End(T).$  Write $\psi= (\psi_{1}, \psi_{2}),$ where $\psi_{1}:= [(P_{1}, u_{1}, Q_{1})]$ and $ \psi_{2}:= [(R_{1}, v_{1}, T_{1})]$ are morphisms in $\tilde Az(S).$ In view of Remark \ref{equi}, we prefer to write $\alpha = [(P,  Q)],$ $\beta= [(R, T)],$   $\psi_{1}= [(P_{1}, Q_{1})]$ and $ \psi_{2}= [(R_{1}, T_{1})].$ Since $\psi$ is a morphism, the following diagram
    $$\begin{CD}
       f^{*}A_{1} @> [(P,~ Q)]>> f^{*}A_{2}\\
       @V [(f^{*}P_{1},~ f^{*}Q_{1})]VV    @V [(f^{*}R_{1},~ f^{*}T_{1})]VV \\
       f^{*}B_{1} @> [(R,~ T)]>> f^{*}B_{2}
      \end{CD}$$ commutes, i.e., 

     $$[(f^{*}R_{1}\otimes P,  f^{*}T_{1}\otimes Q)]= [(R\otimes f^{*}P_{1},  T\otimes f^{*}Q_{1})].$$ This means that there exist self dual locally free $\mathcal{O}_{X}$-modules $H_{1}$ and $H_{2}$ of finite rank over $X$ such that 
     
     \begin{align}\label{equi2}
      f^{*}R_{1}\otimes P\otimes H_{1}\cong R\otimes f^{*}P_{1}\otimes H_{2},~  f^{*}T_{1}\otimes Q\otimes H_{1}\cong T\otimes f^{*}Q_{1}\otimes H_{2}. 
     \end{align}

     Let $\phi_{1}= ([(P_{1}, \mathcal{O}_{S})], [(R_{1}, \mathcal{O}_{S})])$ and $\phi_{2}= ([(Q_{1}, \mathcal{O}_{S})], [(T_{1}, \mathcal{O}_{S})]).$ 
     
     We claim that
     $$\phi_{1}: A:=(A_{1}, \alpha, A_{2}) \to A^{'}:=(A_{1}\otimes End(P_{1}), \alpha^{'}, A_{2}\otimes End(R_{1})),$$  $$\phi_{2}:B:=(B_{1}, \beta, B_{2}) \to B^{'}:=(B_{1}\otimes End(Q_{1}), \beta^{'}, B_{2}\otimes End(T_{1}))$$ both are morphisms in $\tilde Az(f^{*}),$ where 
     $ \alpha^{'}:=[(P\otimes f^{*}R_{1},  Q \otimes f^{*}P_{1})]~  {\rm and}~ \beta^{'}:= [(R\otimes f^{*}T_{1},  T\otimes f^{*}Q_{1})].$ Clearly, $[(P_{1}, \mathcal{O}_{S})]$ and $ [(R_{1}, \mathcal{O}_{S})])$ both are morphisms in $\tilde Az(S).$  Note that $(P\otimes f^{*}R_{1}\otimes f^{*}P_{1}, Q\otimes  f^{*}P_{1})\sim (P\otimes f^{*}R_{1}, Q)$ in $\tilde \Delta(f^{*}A_{1}, f^{*}A_{2} \otimes f^{*}End(R_{1}) ).$ Thus the following diagram $$\begin{CD}
       f^{*}A_{1} @> [(P,~ Q)]>> f^{*}A_{2}\\
       @V [(f^{*}P_{1},~ \mathcal{O}_{X})]VV    @V [(f^{*}R_{1},~ \mathcal{O}_{X})]VV \\
       f^{*}A_{1}\otimes f^{*}End(P_{1}) @> [(P\otimes f^{*}R_{1},  ~Q\otimes f^{*}P_{1})]>> f^{*}A_{2} \otimes f^{*}End(R_{1})
      \end{CD}$$ commutes. Therefore, $\phi_{1}$ is a morphism in $\tilde Az(f^{*}).$ Similarly for $\phi_{2}.$ Hence the cliam.

      Moreover $\alpha^{'} \sim \beta^{'},$ because there exist self dual $\mathcal{O}_{X}$-modules  $ f^{*}T_{1}\otimes H_{1}, f^{*}P_{1}\otimes H_{2}$ such that (by using (\ref{equi2}))
      $$ P\otimes f^{*}R_{1}\otimes f^{*}T_{1}\otimes H_{1}\cong R \otimes f^{*}T_{1}\otimes f^{*}P_{1}\otimes H_{2},~~ Q\otimes f^{*}P_{1}\otimes f^{*}P_{1}\otimes H_{2}\cong T\otimes f^{*}Q_{1}\otimes f^{*}P_{1}\otimes H_{2}.$$ This implies that $([(\mathcal{O}_{S}, \mathcal{O}_{S})], [(\mathcal{O}_{S}, \mathcal{O}_{S})])$ defines a morphism $A^{'} \to B^{'}$ in $\tilde Az(f^{*}).$
      
      Now for  an \'etale map $j:U \to S,$ $\phi_{1}$ induces a $\Tilde \Pic^{f}(U)$-linear map $[G_{A}](U) \to [G_{A^{'}}](U)$ by sending  
     $$[(E_{1}, \tau_{1}), \nu, (E_{2}, \tau_{2})]  ~ {\rm{to}}~ [(E_{1}\otimes j^{*}P_{1}, \tau_{1}^{'}), \nu^{'}, (E_{2}\otimes j^{*}R_{1}, \tau_{2}^{'})],$$  where $\tau_{1}^{'}: End(E_{1}\otimes j^{*}P_{1})\cong j^{*}(A_{1}\otimes  End(P_{1})),$ $ \tau_{2}^{'}: End(E_{2}\otimes j^{*}R_{1})\cong j^{*}(A_{2}\otimes  End(R_{1}))$ and $\nu^{'}: f_{U}^{*}(E_{1}\otimes j^{*}P_{1})\otimes (f_{U}^{*}(E_{2}\otimes j^{*}R_{1}))^{\vee} \cong (f_{U}^{*}(E_{1}\otimes j^{*}P_{1}))^{\vee}\otimes f_{U}^{*}(E_{2}\otimes j^{*}R_{1})$ (for $\nu^{'},$ we use $\nu$ and the fact that  $P_1^{\vee}\cong P_1$ and $R_{1}^{\vee}\cong R_1$). Note that $\Tilde \Pic^{f}(U)$ acts on both $[G_{A}](U),$ $[G_{A^{'}}](U)$ simply transitively (see Remark \ref{connectedness}). Thus, we get $[G_{A}](U)\cong[G_{A^{'}}](U)$ as sets. Therefore $[G_{A}]\cong [G_{A^{'}}]$ as $\Tilde \Pic^{f}$-torsors.  Similarly for $\phi_{2},$  i.e., $[G_{B}]\cong [G_{B^{'}}].$ Clearly, $[G_{A^{'}}]\cong [G_{B^{'}}]$ because  $([(\mathcal{O}_{S}, \mathcal{O}_{S})], [(\mathcal{O}_{S}, \mathcal{O}_{S})])$ defines a morphism $A^{'} \to B^{'}$ in $\tilde Az(f^{*}).$  Hence the result. \end{proof}
     
     \begin{remark}\label{dual imp}{\rm
      We do not know whether the Lemma \ref{welldefineness} holds or not whenever $\psi$ is a morphism in $Az(f^{*}).$}
     \end{remark}

\subsection*{Presheaf contracted products}
  Let $\mathcal{G}$ be a presheaf of groups. Let $\mathcal{F}_{1}$ and $\mathcal{F}_{2}$ be two presheaves of sets on a site $\mathcal{C}$. Suppose that $\mathcal{F}_{1}$(resp. $\mathcal{F}_{2}$) has a right (resp. left) $\mathcal{G}$ action. We can consider the left action of $\mathcal{G}$ on the product given for every object $X$ by:
  $$(\mathcal{G} \times \mathcal{F}_{1} \times \mathcal{F}_{2})(X) \to (\mathcal{F}_{1} \times \mathcal{F}_{2})(X), (g, x, y)\mapsto (xg^{-1}, gy).$$
  
  Then the presheaf contracted product of $\mathcal{F}_{1}$ and $\mathcal{F}_{2}$ to be the presheaf defined as the quotient by the action of $\mathcal{G}$: $$X \mapsto \mathcal{F}_{1}(X) \times \mathcal{F}_{2}(X)/\sim,$$ i.e., the quotient by the equivalence relation define by $(xg, y)\sim (x, gy)$ for all $g\in \mathcal{G}(X).$  It is denoted by $\mathcal{F}_{1}\Pi^{\mathcal{G}} \mathcal{F}_{2}.$ We write $[x, y]$ for the element of  $\mathcal{F}_{1}\Pi^{\mathcal{G}} \mathcal{F}_{2}(X),$ where $X\in ob(\mathcal{C}),$ $x\in \mathcal{F}_{1}(X)$ and $y\in \mathcal{F}_{2}(X).$ For every $X\in ob(\mathcal{C}),$ the left action of $\mathcal{G}$ on the contracted product is given by
  $$(\mathcal{G} \times \mathcal{F}_{1} \Pi^{\mathcal{G}} \mathcal{F}_{2})(X) \to (\mathcal{F}_{1} \Pi^{\mathcal{G}} \mathcal{F}_{2})(X), (g, [x, y])\mapsto [g.x, y].$$

  Assume that $\mathcal{G}$ is a presheaf of abelian groups. Then there is no distinction between left and right $\mathcal{G}$-torsors. Let ${\rm {\rm Tors}}(S, \mathcal{G})$ denotes the set of isomorphism classes of $\mathcal{G}$-torsors. It is well known that the set ${\rm Tors}(S, \mathcal{G})$ has an abelian group structure under the operation presheaf contracted product with identity $\mathcal{G}$ and the inverse of $\mathcal{F}$ is  $\mathcal{F}$ itself with the $\mathcal{G}$ action $(g, x)\mapsto g^{-1}.x.$

   \begin{lemma}\label{respect tensor and composition}
   \begin{enumerate}
    \item Let $A:= (A_{1}, \alpha, A_{2})$,  $A^{'}:= (A_{1}^{'}, \alpha^{'}, A_{2}^{'})$ be in $\tilde Az(f^{*}).$ Write $A\otimes A^{'}:= (A_{1}\otimes A_{1}^{'}, \alpha\otimes \alpha^{'}, A_{2}\otimes A_{2}^{'}).$ Then $[G_{A\otimes A^{'}}]\cong [G_{A}]\Pi^{\Tilde Pic^{f}} [G_{A^{'}}]$ as $\Tilde \Pic^{f}$-torsors.
    \item Let $A:=(A_{1}, \alpha, A_{2})$, $B:= (A_{2}, \beta, A_{3})$ be in $\tilde Az(f^{*}).$ Write $B\circ A:= (A_{1}, \beta\alpha, A_{3}).$ Then $[G_{B\circ A} ]\cong [G_{A}]\Pi^{\Tilde Pic^{f}} [G_{B}]$ as $\Tilde \Pic^{f}$-torsors.
   \end{enumerate}
\end{lemma}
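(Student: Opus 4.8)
The plan is to build explicit morphisms of presheaves of sets in both directions and check they are mutually inverse and $\tilde\Pic^{f}$-equivariant, working section by section over $S_{et}$. Both sides are $\tilde\Pic^{f}$-torsors: the left-hand sides $[G_{A\otimes A'}]$ and $[G_{B\circ A}]$ by Proposition \ref{pic-torsor} (applied on connected components, since $\tilde\Pic^{f}$-torsor structure is local and the simple transitivity of $\rho(U)$ holds without connectedness by Remark \ref{connectedness}), and the contracted products by the general formalism recalled above. Since any morphism of $\tilde\Pic^{f}$-torsors is automatically an isomorphism, it suffices to exhibit a single equivariant morphism in each case; I will produce it by a natural tensor-product construction on objects.

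For part (1), given an \'etale $j\colon U\to S$, I send a pair of objects $((E_{1},\tau_{1}),\nu,(E_{2},\tau_{2}))\in G_{A}(U)$ and $((E_{1}',\tau_{1}'),\nu',(E_{2}',\tau_{2}'))\in G_{A'}(U)$ to the triple $((E_{1}\otimes E_{1}',\tau_{1}\otimes\tau_{1}'),\nu\otimes\nu',(E_{2}\otimes E_{2}',\tau_{2}\otimes\tau_{2}'))$, where $\tau_{i}\otimes\tau_{i}'\colon End(E_{i}\otimes E_{i}')\cong End(E_{i})\otimes End(E_{i}')\cong j^{*}A_{i}\otimes j^{*}A_{i}'\cong j^{*}(A_{i}\otimes A_{i}')$, and $\nu\otimes\nu'$ is the evident isomorphism obtained after reshuffling the tensor factors of $f_{U}^{*}(E_{1}\otimes E_{1}')\otimes(f_{U}^{*}(E_{2}\otimes E_{2}'))^{\vee}$. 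This descends to isomorphism classes and, by the definition of the action $\rho(U)$ and of the presheaf contracted product, factors through the contracted product: replacing $((E_{1}',\tau_{1}'),\nu',(E_{2}',\tau_{2}'))$ by $\rho(U)([L_{1},\gamma,L_{2}],-)$ produces the same class as applying $\rho(U)([L_{1},\gamma,L_{2}],-)$ to the tensored triple, because $(L_{i}\otimes E_{i})\otimes E_{i}'\cong L_{i}\otimes(E_{i}\otimes E_{i}')$ canonically and the isomorphisms $a_{i}$ match up. Hence we get a well-defined morphism of presheaves of sets $[G_{A}]\Pi^{\tilde\Pic^{f}}[G_{A'}]\to[G_{A\otimes A'}]$, and one checks it is $\tilde\Pic^{f}$-equivariant by inspecting the action on the first factor; being a torsor morphism, it is an isomorphism.

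For part (2), given $((E_{1},\tau_{1}),\nu,(E_{2},\tau_{2}))\in G_{A}(U)$ (recording data over $A_{1},A_{2}$) and $((E_{2}'',\tau_{2}''),\nu'',(E_{3},\tau_{3}))\in G_{B}(U)$ (recording data over $A_{2},A_{3}$), I use Lemma \ref{line element}(1) applied to the two objects $(E_{2},\tau_{2})$ and $(E_{2}'',\tau_{2}'')$ of $F_{A_{2}}(U)$ to produce a canonical line bundle $M$ with $M\otimes E_{2}\cong E_{2}''$, and then send the pair to $((M\otimes E_{1},\tau_{1}\circ a_{1}),\ (\text{induced }\nu),\ (E_{3},\tau_{3}))\in G_{B\circ A}(U)$, where the middle isomorphism is obtained by composing $\nu$ (twisted by $M$) with $\nu''$ after identifying $f_{U}^{*}E_{2}''$ with $f_{U}^{*}M\otimes f_{U}^{*}E_{2}$, exactly as in the ``gluing'' isomorphism $\beta\alpha$ of $B\circ A$. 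As in part (1) one verifies this is independent of representatives, descends to isomorphism classes, and is compatible with the $\tilde\Pic^{f}$-action placed on the $A$-factor; the $\tilde\Pic^{f}$-balancing is precisely the statement that twisting $(E_{2},\tau_{2})$ by a line bundle (on the $A$-side) is absorbed into the choice of $M$ (on the $B$-side), which is the content of the uniqueness clause of Lemma \ref{line element}(1). This yields a $\tilde\Pic^{f}$-equivariant morphism $[G_{A}]\Pi^{\tilde\Pic^{f}}[G_{B}]\to[G_{B\circ A}]$, hence an isomorphism of torsors.

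The main obstacle I anticipate is purely bookkeeping rather than conceptual: verifying that the middle isomorphisms $\nu\otimes\nu'$ in (1) and the spliced $\nu$ in (2) satisfy the compatibility diagram \eqref{comm2} under morphisms, and that all the canonical reshufflings of tensor factors and duals are coherent with the identifications $End(E\otimes E')\cong End(E)\otimes End(E')$, $(E\otimes E')^{\vee}\cong E^{\vee}\otimes E'^{\vee}$, and the $End$-balanced tensor identities from Lemma \ref{iso lemma}. Since every map in sight is a morphism of torsors over $\tilde\Pic^{f}$, it is automatically an isomorphism, so one never has to exhibit an explicit inverse; this substantially lightens the verification. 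I would reduce to the affine case throughout to check commutativity of diagrams, exactly as in the proof of Lemma \ref{line element}.
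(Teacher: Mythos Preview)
Your approach is essentially the same as the paper's: define an explicit $\tilde\Pic^{f}(U)$-linear map on sections (tensor product for (1), line-bundle splice for (2)) and conclude by simple transitivity that it is a bijection, hence an isomorphism of torsors. Two small points where your write-up is looser than the paper's argument:

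\begin{itemize}
\item In part (2), Lemma \ref{line element}(1) as stated takes two objects of $G_{A}(U)$, not of $F_{A_{2}}(U)$; the fact you actually need (a unique line bundle $L$ with $E_{2}''\cong E_{2}\otimes L$) is Lemma~4.3 of \cite{VNR}, invoked inside the proof of Lemma \ref{line element}.
\item Your phrase ``composing $\nu$ (twisted by $M$) with $\nu''$'' is not literally a composition: the target of $\nu$ and the source of $\nu''$ do not match. The paper instead tensors $\nu\otimes\nu''$, which produces an isomorphism with an extra factor of $End(f_{U}^{*}E_{2})$ on each side, and then invokes Lemma \ref{iso lemma} to cancel that factor and obtain the required $\chi$. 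You mention Lemma \ref{iso lemma} in your closing paragraph, so you have the right tool in hand; just be explicit that this is where it enters.
\end{itemize}
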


\begin{proof}
 (1)  For an \'etale map $U \to S,$ we have a $\tilde \Pic^{f}(U)$-linear map $$[G_{A}](U) \times [G_{A^{'}}](U)/\sim ~ \to [G_{A\otimes A^{'}}](U), $$ sending $$[[(E_{1}, \tau_{1}), \nu, (E_{2}, \tau_{2})], [(E_{1}^{'}, \tau_{1}^{'}), \nu^{'}, (E_{2}^{'}, \tau_{2}^{'})]]~ {\rm {to}}~ [[(E_{1}\otimes E_{1}^{'}, \tau_{1}\otimes \tau_{1}^{'}),  \nu \otimes \nu^{'}, (E_{2}\otimes E_{2}^{'},\tau_{2}\otimes \tau_{2}^{'})]].$$ Since $\tilde \Pic^{f}(U)$ acts simply transitively on $[G_{A}](U) \times [G_{A^{'}}](U)/\sim$ and  $[G_{A\otimes A^{'}}](U),$ we get $[G_{A}](U) \times [G_{A^{'}}](U)/\sim ~ \cong [G_{A\otimes A^{'}}](U)$ as sets. Therefore $[G_{A\otimes A^{'}}]\cong [G_{A}]\Pi^{\tilde Pic^{f}} [G_{A^{'}}]$ as $\Tilde \Pic^{f}$-torsors.
 
 (2) Let $[(E_{1}, \tau_{1}), \nu, (E_{2}, \tau_{2})]\in [G_{A}](U)$ and  $[(E_{2}^{'}, \tau_{2}^{'}), \nu^{'}, (E_{3}, \tau_{3})] \in [G_{B}](U).$ Since $(E_{2}, \tau_{2})$ and $(E_{2}^{'}, \tau_{2}^{'}) \in F_{A_{2}}(U),$ there exists a unique line bundle $L$ such that $E_{2}^{'}\cong E_{2}\otimes L$ (see Lemma 4.3 of \cite{VNR}). The map $\nu\otimes \nu^{'}$ gives an  isomorphism
 $$ f_{U}^{*}(E_{1}\otimes L) \otimes ( f_{U}^{*}E_{3})^{\vee} \otimes End( f_{U}^{*}E_{2})\cong ( f_{U}^{*}(E_{1}\otimes L))^{\vee} \otimes  f_{U}^{*}E_{3} \otimes End( f_{U}^{*}E_{2})$$ of $\mathcal{O}_{X\times_{S} U}$-modules. Then by applying Lemma \ref{iso lemma}, we get an  $\mathcal{O}_{X\times_{S} U}$-module isomorphism $\chi: f_{U}^{*}(E_{1}\otimes L) \otimes ( f_{U}^{*}E_{3})^{\vee}\cong ( f_{U}^{*}(E_{1}\otimes L))^{\vee} \otimes  f_{U}^{*}E_{3}.$ This shows that $[(E_{1}\otimes L, \tau_{1}a_{1}), \chi, (E_{3}, \tau_{3})] \in [G_{B\circ A} ](U),$ where $a_{1}: End(E_{1}\otimes L)\cong End(E_{1}).$ Now, we have a $\tilde \Pic^{f}(U)$-linear map $$[G_{A}](U) \times [G_{B}](U)/\sim ~ \to [G_{B\circ A}](U), $$ sending $[[(E_{1}, \tau_{1}), \nu, (E_{2}, \tau_{2})], [(E_{1}^{'}, \tau_{1}^{'}), \nu^{'}, (E_{3}, \tau_{3})]]~ {\rm{to}}~ [[(E_{1}\otimes L, \tau_{1}a_{1}), \chi, (E_{3},\tau_{3})]].$ The rest of the arguments similar to (1). Hence the result. \end{proof}

By Lemma \ref{welldefineness}, we obtain a well-define map 
\begin{equation}\label{first step}
 \{\rm{The~ set~ of~ isomorphism~ classes~ of~ objects~ in~ \tilde { \it Az( f^{*}})}\} \to {\it  {\rm Tors}(S,\Tilde \Pic^{{\it f}})}, {\it [A] \mapsto [G_{A}]}.
\end{equation}

In fact, this induces a well-define group homomorphism 

$$\omega: 
\tilde \Br(f) \to {\rm Tors}(S, \Tilde \Pic^{f})$$ by Lemma \ref{respect tensor and composition}. Moreover, $\omega$ is a natural map (see below).
 
\begin{lemma}\label{naturality}
 The map $\omega$ is natural.
\end{lemma}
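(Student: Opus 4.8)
The plan is first to make precise what naturality means here. A morphism from a faithful affine map $f':X'\to S'$ to $f:X\to S$ is a commutative square
$$\begin{CD} X' @>h>> X\\ @Vf'VV @VVfV\\ S' @>g>> S \end{CD}$$
Base change of Azumaya algebras along $g$, together with the canonical identification $f'^{*}g^{*}\cong h^{*}f^{*}$, carries a triple $(A_{1},\alpha,A_{2})\in\tilde Az(f^{*})$ to a triple in $\tilde Az(f'^{*})$ and respects $\otimes$ and composition; hence it induces a group homomorphism $g^{\#}\colon\tilde\Br(f)\to\tilde\Br(f')$. On the other side, all of the constructions $F_{A}$, $G_{A}$, $[G_{A}]$ and $\tilde\Pic^{f}$ make sense for an \emph{arbitrary} $S$-scheme (the \'etale hypothesis only serves to organize them into presheaves on $S_{et}$ and to prove the torsor property), and for an \'etale $j'\colon U'\to S'$ the canonical morphism $X'\times_{S'}U'\to X\times_{S}U'$ over $U'$ induces a base-change map $\tilde\Pic(X\times_{S}U'\to U')\to\tilde\Pic(f'_{U'})$, compatibly with restriction. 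Combining pullback of $\tilde\Pic^{f}$-torsors along $g$ with this morphism of presheaves (via a contracted product) yields $g^{*}\colon{\rm Tors}(S,\tilde\Pic^{f})\to{\rm Tors}(S',\tilde\Pic^{f'})$. Naturality of $\omega$ is the assertion that $\omega'\circ g^{\#}=g^{*}\circ\omega$.

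Since every element of $\tilde\Br(f)$ is a $\Zb$-combination of classes $[A]$ with $A=(A_{1},\alpha,A_{2})\in\tilde Az(f^{*})$, and all four maps in the square are group homomorphisms (here using that base change commutes with $\otimes$ and with composition of morphisms, so that it is compatible with the presheaf contracted products of Lemma~\ref{respect tensor and composition} on both sides), it suffices to prove $g^{*}[G_{A}]\cong[G_{g^{\#}A}]$ as $\tilde\Pic^{f'}$-torsors for each such $A$. I would construct the comparison morphism directly: for an \'etale $U'\to S'$ and an object $((E_{1},\tau_{1}),\nu,(E_{2},\tau_{2}))$ of $G_{A}(U')$ (formed over $S$ via the composite $U'\to S'\to S$), apply the relevant pullback to the locally free sheaves $E_{i}$ and to the isomorphisms $\tau_{i},\nu$. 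Because pullback commutes up to canonical isomorphism with $\mathrm{End}(-)$, $(-)^{\vee}$ and $\otimes$ and carries isomorphisms to isomorphisms, the result is an object of $G_{g^{\#}A}(U')$; the assignment descends to isomorphism classes and is compatible with restriction, giving a morphism of presheaves $g^{*}[G_{A}]\to[G_{g^{\#}A}]$.

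What remains — and this is the only point requiring care — is to check that this morphism intertwines the pulled-back action of $\tilde\Pic^{f}$ with the action $\rho'$ of $\tilde\Pic^{f'}$ on $[G_{g^{\#}A}]$. This is a bookkeeping exercise with the construction in Lemma~\ref{line element}: one must verify that the line bundle $L_{k}=E_{k}'\otimes_{\mathrm{End}(E_{k})}E_{k}^{\vee}$, the canonical trivializations $v_{k}$, the maps $a_{k}\colon\mathrm{End}(E_{k}\otimes L_{k})\cong\mathrm{End}(E_{k})$, and the isomorphism $\beta$ of~(\ref{map}) are all compatible with pullback, which they are because each is built from $\otimes$, $(-)^{\vee}$, $\mathrm{End}(-)$ and the contraction $\mathcal{F}\otimes_{\mathrm{End}(\mathcal{F})}\mathcal{F}^{\vee}\cong\mathcal{O}$, each of which pulls back canonically. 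Once equivariance is established, the comparison is automatically an isomorphism of torsors, since any morphism of torsors over an abelian presheaf of groups is an isomorphism; hence $g^{*}[G_{A}]\cong[G_{g^{\#}A}]$ and naturality follows. I expect the main obstacle to be purely notational: tracking the several canonical isomorphisms and the base-change comparison of the sites $S'_{et}\to S_{et}$. There is no conceptual difficulty, since $F_{A}$, $G_{A}$ and $\tilde\Pic^{f}$ are manifestly functorial in the base.
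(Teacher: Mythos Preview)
Your proposal is correct and follows essentially the same route as the paper: set up the base-change square, define the induced maps on $\tilde\Br$ and on torsors (the latter via inverse image followed by contracted product with $\tilde\Pic^{f'}$), build an explicit equivariant comparison map $g^{*}[G_{A}]\to[G_{g^{\#}A}]$ by pulling back the data $(E_{i},\tau_{i},\nu)$, and conclude it is an isomorphism because the action is simply transitive. The only cosmetic difference is that the paper phrases the pullback of torsors via the colimit definition of the inverse-image presheaf $h^{-1}\mathcal{G}$, whereas you observe that $G_{A}$ and $\tilde\Pic^{f}$ extend to arbitrary $S$-schemes and work directly at $U'\to S'\to S$; these are equivalent formulations. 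One small remark: the equivariance check you describe is lighter than you suggest --- since the action $\rho$ is simply tensoring with line bundles, compatibility with pullback is immediate from the fact that pullback commutes with $\otimes$, and you do not actually need to trace through the construction of $\beta$ in Lemma~\ref{line element}.
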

\begin{proof}
Given a commutative diagram 

$$\begin{CD}
     X^{'} @> f^{'}>> S^{'}\\
    @V g VV  @V h VV \\                 
    X @> f >> S
  \end{CD},$$ we want to show that the following diagram 
  $$\begin{CD}
    \tilde \Br(f) @>>> {\rm Tors}(S, \Tilde \Pic^{f})\\
    @V h^{*} VV  @V {\rm Tors}(h) VV \\                 
   \tilde \Br(f^{'}) @>>> {\rm Tors}(S^{'}, \Tilde \Pic^{f^{'}})
  \end{CD}$$ is commutative. Here $h^{*}([(A_{1}, \alpha, A_{2})])=[ (h^{*}A_{1}, h^{*}\alpha, h^{*}A_{2})]$ and ${\rm Tors}(h)$ is defined as follows. For an \'etale map $V \stackrel{i}\to S^{'}$, $(h^{-1}\mathcal{G})(V):= \varinjlim  \mathcal{G}(U),$ where $\mathcal{G}\in {\rm Tors}(S, \Tilde \Pic^{f})$ and the direct limit is over the commutative diagrams 
  $$\begin{CD}
     V @> k >> U\\
    @V i VV  @V j VV \\                 
    S^{'} @> h >> S
  \end{CD}$$ with $j$ \'etale.  Now, for such a diagram,  we obtain a natural group homomorphism $$  \Tilde \Pic^{f}(U) \to \Tilde \Pic^{f^{'}}(V),  [L_1, a, L_2] \mapsto  [k^{*}L_1, \varepsilon^{*}a, k^{*}L_2].$$ Here $\varepsilon$ is the unique map $X^{'} \times_{S^{'}} V \to X\times_S U$ compatible with the maps $g$, $h$ and $k.$ By the universal property of direct limits, there is a unique homomorphism $h^{-1}\Tilde \Pic^{f} \to \Tilde \Pic^{f^{'}}.$ Then ${\rm Tors}(h)(\mathcal{G})= h^{-1}\mathcal{G} ~\Pi^{h^{-1}\Tilde \Pic^{f}} ~\Tilde \Pic^{{\it f^{'}}}.$ 
  
  More explicitly, we have to show that there is an isomorphism $$h^{-1}[G_{(A_1,~ \alpha,~ A_2)}]~{ \Pi^{h^{-1}\Tilde \Pic^{f}} ~ \Tilde \Pic^{{\it f^{'}}}} \cong [G_{(h^{*}A_1,~ h^{*}\alpha,~ h^{*}A_2)}]$$ of $\Tilde \Pic^{f^{'}}$-torsors. Note that we have a $ \Tilde \Pic^{f}(U)$-linear map 
  $$[G_{(A_1,~ \alpha,~ A_2)}](U) \to [G_{(h^{*}A_1,~ h^{*}\alpha,~ h^{*}A_2)}](V),$$ sending  $[(E_{1}, \tau_{1}), \nu, (E_{2}, \tau_{2})]$~ {\rm to}~ $[(k^{*}E_{1}, k^{*}\tau_{1}), \varepsilon^{*}\nu, (k^{*}E_{2}, k^{*}\tau_{2})].$ Therefore, we deduce a $\Tilde \Pic^{f^{'}}(V)$-linear map 
  $${h^{-1}[G_{(A_1,~ \alpha,~ A_2)}](V) \times \Tilde \Pic^{f^{'}}(V)/\sim ~} \to [G_{(h^{*}A_1,~ h^{*}\alpha,~ h^{*}A_2)}](V)$$ by passing to the limit over such all such diagrams. Since $ \Tilde \Pic^{f^{'}}(V)$ acts simply transitively on both sides, we get the result. \end{proof}

  Now, by using the maps $\Upsilon$ (see (\ref{Br to tilde Br})) and $\Psi$ (see (\ref{tor map})), we define a natural group homomorphism $\Br(f) \to {\rm Tors}(S, \Pic^{{\it f}})$ as follows
  \begin{equation}\label{second step}
   \Br(f) \stackrel{\Upsilon}\to \tilde \Br(f) \stackrel{\omega}\to {\rm Tors}(S, \tilde \Pic^{f}) \stackrel{{\rm Tors}(S, \Psi)}\longrightarrow {\rm Tors}(S, \Pic^{{\it f}}),
  \end{equation}

where ${\rm Tors}(S, \Psi)$ is the natural induced map sending any $\tilde \Pic^{f}$-torsor $\mathcal{G}$ to $\mathcal{G}~\Pi^{\tilde \Pic^{f}} \Pic^{{\it f}}.$
  
   \begin{lemma}\label{torsor and h1}
   There is a natural group homomorphism $\zeta: {\rm Tors}(S, \mathcal{G}) \to \check H_{et}^{1}(S, \mathcal{G}),$ where $\mathcal{G}$ is a presheaf of abelian groups and $\check H_{et}^{1}(S, \mathcal{G})$ denotes the first \'etale \v{C}ech cohomology group associated to $\mathcal{G}.$
  \end{lemma}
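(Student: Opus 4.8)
The plan is to use the classical dictionary between torsors and first Čech cohomology: a $\mathcal{G}$-torsor is trivialized on some covering, and comparing the trivializations on overlaps produces a $1$-cocycle, whose class is well defined after passing to the colimit over coverings.

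First I would fix a $\mathcal{G}$-torsor $\mathcal{F}$ on $S_{et}$. By axiom (2) in the definition of a presheaf torsor there is an étale covering $\mathcal{U}=\{U_i\to S\}_{i\in I}$ with $\mathcal{F}(U_i)\neq\emptyset$ for every $i$; choose $s_i\in\mathcal{F}(U_i)$. For each pair $(i,j)$ set $U_{ij}=U_i\times_S U_j$ and note $\mathcal{F}(U_{ij})\neq\emptyset$ since it contains the restriction of $s_i$, so by axiom (1) the action $\rho(U_{ij})$ is simply transitive; hence there is a unique $g_{ij}\in\mathcal{G}(U_{ij})$ with $g_{ij}\cdot(s_j|_{U_{ij}})=s_i|_{U_{ij}}$. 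On a triple overlap $U_{ijk}$ the identities $g_{ij}\cdot s_j=s_i$, $g_{jk}\cdot s_k=s_j$, $g_{ik}\cdot s_k=s_i$ together with simple transitivity of $\rho(U_{ijk})$ force $g_{ij}g_{jk}=g_{ik}$, i.e.\ $(g_{ij})$ is a Čech $1$-cocycle for $\mathcal{U}$ valued in $\mathcal{G}$. Its class lies in $\check H^1(\mathcal{U},\mathcal{G})$, and composing with the canonical map to $\check H^1_{et}(S,\mathcal{G})=\varinjlim_{\mathcal{U}}\check H^1(\mathcal{U},\mathcal{G})$ yields the element I define to be $\zeta(\mathcal{F})$.

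Next I would verify the well-definedness and functoriality statements. Replacing each $s_i$ by $s_i'=h_i\cdot s_i$ with $h_i\in\mathcal{G}(U_i)$ replaces $g_{ij}$ by $h_i g_{ij} h_j^{-1}$, a coboundary, so the class in $\check H^1(\mathcal{U},\mathcal{G})$ is unchanged; passing to a refinement of $\mathcal{U}$ does not change the image in the colimit, so $\zeta(\mathcal{F})$ depends only on $\mathcal{F}$. If $\varphi:\mathcal{F}\to\mathcal{F}'$ is a morphism of $\mathcal{G}$-torsors, then the sections $\varphi(s_i)$ trivialize $\mathcal{F}'$ over the same covering and, since $\varphi$ is $\mathcal{G}$-equivariant, give the same cocycle $(g_{ij})$; hence $\zeta$ descends to isomorphism classes. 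Finally, the trivial torsor $\mathcal{G}$ has the global section $1\in\mathcal{G}(S)$, so one may use the covering $\{S\to S\}$ and the cocycle is trivial, whence $\zeta(\mathcal{G})=0$.

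For the group-homomorphism property I would evaluate $\zeta$ on a presheaf contracted product. Given torsors $\mathcal{F}_1,\mathcal{F}_2$ trivialized on a common covering $\mathcal{U}$ by $s_i\in\mathcal{F}_1(U_i)$ and $t_i\in\mathcal{F}_2(U_i)$, with cocycles $(g_{ij})$ and $(g_{ij}')$, the elements $[s_i,t_i]\in(\mathcal{F}_1\Pi^{\mathcal{G}}\mathcal{F}_2)(U_i)$ trivialize the contracted product, and from $[s_i,t_i]=[g_{ij}s_j,\,g_{ij}'t_j]=(g_{ij}g_{ij}')\cdot[s_j,t_j]$ (using that $\mathcal{G}$ is abelian, so left and right actions agree and moving $g_{ij}'$ across the relation $(xg,y)\sim(x,gy)$ is harmless) its associated cocycle is $(g_{ij}g_{ij}')$; therefore $\zeta(\mathcal{F}_1\Pi^{\mathcal{G}}\mathcal{F}_2)=\zeta(\mathcal{F}_1)+\zeta(\mathcal{F}_2)$. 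The only delicate point in the whole argument is precisely this last bookkeeping with the left/right conventions in the contracted product, so that the cocycle comes out as the \emph{product} of the two cocycles; once conventions are pinned down everything is routine, and all of this is standard (compare chapter 21 of \cite{sp}).
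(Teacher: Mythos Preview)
Your proposal is correct and follows essentially the same approach as the paper. The paper simply refers to Milne for the construction of $\zeta$ (noting that the sheaf hypothesis is not used) and then, like you, verifies the group homomorphism property by computing the cocycle of a contracted product $\mathcal{F}_1\Pi^{\mathcal{G}}\mathcal{F}_2$ from chosen local sections and showing it is the product of the two individual cocycles; you have spelled out in detail the construction and well-definedness steps that the paper leaves to the reference, but the substance is the same.
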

  
  \begin{proof}
   For the map $\zeta,$ we refer to section 11 of \cite{Milne1} or Proposition 4.6 of \cite{Milne}. The only difference is that we are defining the map here for presheaf torsors.  However, we observe that the sheaf properties have not been used in \cite{Milne1} to define such a map.  Hence, one can define such map in a similar way for presheaf torsors as well.
   
   By definition $\check H_{et}^{1}(S, \mathcal{G})= \varinjlim_{\mathcal{U}} \check H_{et}^{1}(\mathcal{U}, \mathcal{G}),$ where the limit is taken over all covering $\mathcal{U}=\{U_{i}\to S\}.$  Write $U_{ij}$ for $U_{i}\times_S U_{j}.$  Let $\mathcal{F}_{1}, \mathcal{F}_{2} \in {\rm Tors}(S, \mathcal{G}).$  Then for some covering $\mathcal{U}=\{U_{i}\to S\},$ $\zeta(\mathcal{F}_{1})= (g_{ij})_{(i, j)\in I \times I}$ and $\zeta(\mathcal{F}_{2})= (h_{ij})_{(i, j)\in I \times I},$ where $g_{ij}, h_{ij} \in \mathcal{G}(U_{ij}).$ Note that $(g_{ij})_{(i, j)\in I \times I}$ and $(h_{ij})_{(i, j)\in I \times I}$ satisfy the following properties: $g_{i, j}. x_{i}|_{U_{ij}}= x_{j}|_{U_{ij}}$ and $h_{i, j}. y_{i}|_{U_{ij}}= y_{j}|_{U_{ij}},$ where  $x_{i}\in \mathcal{F}_{1}(U_{i}),$  $y_{i}\in \mathcal{F}_{2}(U_{i}).$ 
   Recall that $(\mathcal{F}_{1} \Pi^{\mathcal{G}} \mathcal{F}_{2})(U_{i})$ consists of elements of the form $[x_{i}, y_{i}]$ with $x_{i}\in \mathcal{F}_{1}(U_{i}),$  $y_{i}\in \mathcal{F}_{2}(U_{i}).$ We have 
   $$ g_{ij}h_{ij}.[x_{i}{|_{U_{ij}}}, y_{i}{|_{U_{ij}}} ]= [g_{ij}h_{ij}.x_{i}{|_{U_{ij}}}, y_{i}{|_{U_{ij}}}]= [g_{ij}.x_{i}{|_{U_{ij}}}, h_{ij}. y_{i}{|_{U_{ij}}}]= [x_{j}{|_{U_{ij}}}, y_{j}{|_{U_{ij}}}].$$ This implies that $(g_{ij}h_{ij})_{(i, j)\in I \times I}$ defines an element of $\check H_{et}^{1}(\mathcal{U}, \mathcal{G}).$ Therefore $\zeta(\mathcal{F}_{1} \Pi^{\mathcal{G}} \mathcal{F}_{2})= (g_{ij}h_{ij})_{(i, j)\in I \times I}= \zeta(\mathcal{F}_{1}) \zeta(\mathcal{F}_{2})$ and  hence $\zeta$ is a group homomorphism. \end{proof}
   
  \begin{remark}\rm{
    If $\mathcal{G}$ is a sheaf of abelian groups then the map $\zeta$ is an isomorphism (see Proposition 11.1 of \cite{Milne1}).}\end{remark}
  
\begin{lemma}\label{same as etale}
 Let $\mathcal{P}ic^{f}_{et}$ be the \'etale sheafification of the presheaf $\Pic^{{\it f}}$ on $S_{et}.$ Then $$(f_{*}\mathcal{O}_{X}^{\times}/\mathcal{O}_{S}^{\times})_{et}\cong \mathcal{P}ic^{f}_{et}.$$
\end{lemma}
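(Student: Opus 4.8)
The plan is to compare the two \'etale sheaves through the relative Picard presheaf $\Pic^{f}$, and to reduce the statement to the standard fact that the presheaf $U\mapsto\Pic(U)$ has zero \'etale sheafification.

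First I would note that for every \'etale $j\colon U\to S$, applying the exact sequence (\ref{U-Pic}) to the base-changed map $f_{U}\colon X\times_{S}U\to U$ gives an exact sequence
\[
 1 \to \mathcal{O}^{\times}(U) \to \mathcal{O}^{\times}(X\times_{S}U) \to \Pic(f_{U}) \to \Pic(U) \to \Pic(X\times_{S}U),
\]
and that these sequences are natural in $U$, i.e.\ compatible with pullback along \'etale morphisms $U'\to U$. Since $f$ is affine, $\mathcal{O}^{\times}(X\times_{S}U)=(f_{*}\mathcal{O}_{X}^{\times})(U)$; moreover $\mathcal{O}^{\times}(U)=\mathcal{O}_{S}^{\times}(U)$ and $\Pic(f_{U})=\Pic^{f}(U)$ by definition. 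Hence on $S_{et}$ I obtain an exact sequence of presheaves of abelian groups
\[
 1 \to \mathcal{O}_{S}^{\times} \to f_{*}\mathcal{O}_{X}^{\times} \to \Pic^{f} \xrightarrow{\,r\,} \underline{\mathrm{Pic}}_{S},
\]
where $\underline{\mathrm{Pic}}_{S}$ denotes the presheaf $U\mapsto\Pic(U)$. Letting $\mathcal{K}$ be the image presheaf of $r$, which by exactness is the subpresheaf $U\mapsto\ker\!\bigl(\Pic(U)\to\Pic(X\times_{S}U)\bigr)$ of $\underline{\mathrm{Pic}}_{S}$, and splitting the sequence, I get a short exact sequence of presheaves on $S_{et}$
\[
 0 \to f_{*}\mathcal{O}_{X}^{\times}/\mathcal{O}_{S}^{\times} \to \Pic^{f} \to \mathcal{K} \to 0,
\]
the first term being the presheaf quotient.

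Next I would sheafify. Since \'etale sheafification is an exact functor on presheaves of abelian groups, this becomes a short exact sequence of \'etale sheaves
\[
 0 \to (f_{*}\mathcal{O}_{X}^{\times}/\mathcal{O}_{S}^{\times})_{et} \to \mathcal{P}ic^{f}_{et} \to \mathcal{K}_{et} \to 0,
\]
so the lemma follows once I show $\mathcal{K}_{et}=0$. As $\mathcal{K}$ is a subpresheaf of $\underline{\mathrm{Pic}}_{S}$ and sheafification is exact, $\mathcal{K}_{et}$ injects into $(\underline{\mathrm{Pic}}_{S})_{et}$, so it suffices to check $(\underline{\mathrm{Pic}}_{S})_{et}=0$. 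This is the one conceptual point: for any $U\in ob(S_{et})$ and any line bundle $L$ on $U$ there is a Zariski --- hence \'etale --- covering $\{V_{i}\to U\}$ with $L|_{V_{i}}\cong\mathcal{O}_{V_{i}}$, so the section $L$ of $\underline{\mathrm{Pic}}_{S}$ over $U$ becomes zero on an \'etale covering and therefore maps to $0$ in the sheafification; since this holds for every $U$, $(\underline{\mathrm{Pic}}_{S})_{et}=0$. Hence $\mathcal{K}_{et}=0$, and the displayed short exact sequence of sheaves yields the isomorphism $(f_{*}\mathcal{O}_{X}^{\times}/\mathcal{O}_{S}^{\times})_{et}\cong\mathcal{P}ic^{f}_{et}$.

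I do not anticipate a serious obstacle. The point demanding the most care is the naturality in $U$ of the five-term sequence (\ref{U-Pic}) --- that is, that the constructions above genuinely assemble into a sequence of presheaves on $S_{et}$ --- but this is routine given the functoriality of the relative Picard group established in \cite{SW1}; everything else reduces to exactness of \'etale sheafification and the Zariski-local triviality of line bundles.
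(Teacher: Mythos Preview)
Your argument is correct and rests on the same two ingredients as the paper's proof: the five-term sequence (\ref{U-Pic}) and the local triviality of line bundles. The packaging differs slightly. The paper first produces a single morphism of sheaves $(f_{*}\mathcal{O}_{X}^{\times}/\mathcal{O}_{S}^{\times})_{et}\to\mathcal{P}ic^{f}_{et}$ via the universal property of sheafification, and then checks it is an isomorphism stalkwise at each geometric point $\bar{s}$, using that $\Pic$ of a strictly hensel local ring vanishes so that (\ref{U-Pic}) collapses on stalks. You instead organise (\ref{U-Pic}) as a short exact sequence of presheaves, sheafify, and kill the cokernel by observing that the Picard presheaf has zero \'etale sheafification. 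Your route avoids any explicit mention of strictly hensel local rings (Zariski-local triviality suffices), while the paper's stalk check is a one-line appeal to a standard fact; both are equally valid and of the same length.
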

\begin{proof}
 For each \'etale  $U \to S,$ there is a natural map (using (\ref{U-Pic}))$$\Gamma(\tilde X, \mathcal{O}_{\tilde X})^{\times}/\Gamma(U, \mathcal{O}_{U})^{\times} \to \Pic^{{\it f}}({\it U}) \to \mathcal{P}ic^{{\it f}}({\it U}).$$ Here $\tilde X$ stands for $X\times_S U$. Then by the universal property of sheafification, we get a unique map $(f_{*}\mathcal{O}_{X}^{\times}/\mathcal{O}_{S}^{\times})_{et} \to \mathcal{P}ic^{f}_{et}$ of sheaves. Since $\Pic$ vanishes for a strictly hensel local ring, $ ((f_{*}\mathcal{O}_{X}^{\times}/\mathcal{O}_{S}^{\times})_{et})_{\bar{s}}\cong (\mathcal{P}ic^{f}_{et})_{\bar{s}}$ for all geometric point $\bar{s}$ of $S$ by the sequence (\ref{U-Pic}). Therefore, $ (f_{*}\mathcal{O}_{X}^{\times}/\mathcal{O}_{S}^{\times})_{et}\cong \mathcal{P}ic^{f}_{et}$. \end{proof}

\begin{theorem}\label{natural map}
  Let $f: X\to S $ be a faithful affine map of noetherian schemes. 
Then there is a natural group homomorphism $\theta: \Br(f)\to \Br^{'}(f):= H^{1}_{et}(S, f_{*}\mathcal{O}_{X}^{\times}/\mathcal{O}_{S}^{\times}).$
 \end{theorem}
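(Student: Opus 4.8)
The plan is to obtain $\theta$ as the concatenation of the maps already constructed in this section with the standard comparison between presheaf torsors, \'etale \v{C}ech cohomology, and \'etale cohomology.

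First I would recall from (\ref{second step}) the natural group homomorphism
\[
\Br(f)\xrightarrow{\Upsilon}\tilde \Br(f)\xrightarrow{\omega}{\rm Tors}(S,\tilde \Pic^{f})\xrightarrow{{\rm Tors}(S,\Psi)}{\rm Tors}(S,\Pic^{{\it f}}).
\]
When $S$ is connected this is legitimate by Proposition \ref{pic-torsor}, Lemma \ref{welldefineness}, and Lemma \ref{respect tensor and composition}. For a general noetherian $S$ I would note that $S$ has only finitely many connected components, on each of which $A_{1}$ and $A_{2}$ have constant rank; hence, after refining any given \'etale cover to one over which these ranks are constant, the argument of Proposition \ref{pic-torsor} still produces an \'etale cover trivialising $[G_{A}]$, which together with Remark \ref{connectedness} shows that $[G_{A}]$ is a $\tilde \Pic^{f}$-torsor, so that the displayed composite is defined for every noetherian $S$.

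Next I would apply Lemma \ref{torsor and h1} with $\mathcal{G}=\Pic^{{\it f}}$ to obtain the natural homomorphism $\zeta\colon{\rm Tors}(S,\Pic^{{\it f}})\to\check H_{et}^{1}(S,\Pic^{{\it f}})$. The canonical morphism of presheaves $\Pic^{{\it f}}\to\mathcal{P}ic^{f}_{et}$ induces $\check H_{et}^{1}(S,\Pic^{{\it f}})\to\check H_{et}^{1}(S,\mathcal{P}ic^{f}_{et})$; since $\mathcal{P}ic^{f}_{et}$ is an abelian sheaf, the natural isomorphism $\check H_{et}^{1}(S,\mathcal{F})\cong H_{et}^{1}(S,\mathcal{F})$ valid for any abelian sheaf $\mathcal{F}$ (see the remark following Lemma \ref{torsor and h1}, and Proposition 11.1 of \cite{Milne1}) identifies the target with $H_{et}^{1}(S,\mathcal{P}ic^{f}_{et})$; and Lemma \ref{same as etale} gives $\mathcal{P}ic^{f}_{et}\cong(f_{*}\mathcal{O}_{X}^{\times}/\mathcal{O}_{S}^{\times})_{et}$, so $H_{et}^{1}(S,\mathcal{P}ic^{f}_{et})\cong H_{et}^{1}(S,f_{*}\mathcal{O}_{X}^{\times}/\mathcal{O}_{S}^{\times})=\Br^{'}(f)$. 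Concatenating all of these arrows defines $\theta\colon\Br(f)\to\Br^{'}(f)$, which is a homomorphism of abelian groups since each of its factors is.

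Finally I would check naturality. Given a commutative square of schemes as in the proof of Lemma \ref{naturality}, the square from $\Br(f)$ to ${\rm Tors}(S,\Pic^{{\it f}})$ commutes by Lemma \ref{naturality} together with the functoriality of ${\rm Tors}(S,\Psi)$, and the remaining squares commute because $\zeta$, the sheafification-induced map, and the comparison $\check H_{et}^{1}\cong H_{et}^{1}$ are each functorial in the coefficient (pre)sheaf and in the base scheme. I expect the only point requiring genuine care to be exactly this last one, namely confirming that $\zeta$ and the sheafification-induced map are compatible with the base-change operation $h^{-1}(-)\,\Pi^{h^{-1}\tilde \Pic^{f}}\,\tilde \Pic^{f^{'}}$ appearing in Lemma \ref{naturality}; the rest is a formal concatenation of maps already established.
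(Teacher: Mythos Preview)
Your proposal is correct and follows the same route as the paper, defining $\theta$ as the composite of (\ref{second step}) with $\zeta$, the passage to the sheafification on \v{C}ech $H^{1}$, the isomorphism $\check H^{1}_{et}\cong H^{1}_{et}$ for abelian sheaves, and Lemma \ref{same as etale}. The only cosmetic difference is in handling disconnected $S$: the paper reduces to the connected case by decomposing $S$ into its finitely many components and using that cohomology commutes with finite disjoint unions, whereas you extend Proposition \ref{pic-torsor} directly by observing that the ranks of $A_{1},A_{2}$ are locally constant on a noetherian base.
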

 
 \begin{proof}
 Since $S$ is noetherian, we can write $S= \sqcup_{i=1}^{n} S_{i},$ where $S_{i}$'s are connected components of $S.$ Then $X= \sqcup_{i=1}^{n} X\times_{S} S_{i}$ and $f= \sqcup_{i=1}^{n} f_{i},$ where $f_{i}:  X\times_{S} S_{i} \to S_{i}.$ For each $\eta_{i}: f_{i} \to f,$ we get an induced map $\gamma_{i}:\Br(f) \to \Br(f_{i})$.  Therefore, $\gamma = (\gamma_{i})$ defines a map  $\Br(f) \to \Br(f_{1}) \times \dots \times \Br(f_{n}).$ Moreover, the cohomology $H^{i}$ commute with the finite disjoint union. So, we may assume that $S$ is connected. Now, we note that the following two well-known facts. 
 \begin{enumerate}
  \item Given a presheaf $\mathcal{F}$ of abelian groups on $S_{et}$  and the sheafification $\mathcal{F}^{+},$ there is always a natural map  $$\check H_{et}^{i}(S, \mathcal{F})\to \check H_{et}^{i}(S, \mathcal{F}^{+}).$$ 
  \item For all abelian sheaf $\mathcal{F},$ the natural map $$\check H_{et}^{1}(S, \mathcal{F})\to  H_{et}^{1}(S, \mathcal{F})$$ is an isomorphism. 
 \end{enumerate}

 The above two facts together with Lemmas \ref{torsor and h1} and \ref{same as etale} allow us to define the desired map $\theta$ as the composition of the following maps (see also (\ref{second step}))
 \begin{equation*}
 \begin{split}
  \Br(f) \stackrel{\Upsilon}\to \tilde \Br(f) \stackrel{\omega}\to {\rm Tors}(S, \tilde \Pic^{f}) \stackrel{{\rm Tors}(S, \Psi)}\longrightarrow {\rm Tors}(S, \Pic^{{\it f}})\stackrel{\zeta}\rightarrow \check {\it H}_{et}^{1}({\it S}, \Pic^{{\it f}})\to \check { \it H}_{et}^{1}({\it S}, \mathcal{P}ic^{{\it f}})\\  \stackrel{\cong}\to \check H_{et}^{1}(S, f_{*}\mathcal{O}_{X}^{\times}/\mathcal{O}_{S}^{\times}) \stackrel{\cong}\to H^{1}_{et}(S, f_{*}\mathcal{O}_{X}^{\times}/\mathcal{O}_{S}^{\times}).
 \end{split}
\end{equation*}\end{proof}

\begin{remark}\rm{
 The map $\theta$ is not injective in general (see Example \ref{not iso in gen}).}\end{remark}

 \section{The Relative Brauer group of subintegral extensions}\label{subintegral case}
In this section, we study the relative Brauer group $\Br(f)$ in the case when $f$ is a subintegral extension. Some details pertaining to subintegral extensions can be found in \cite{swan}.

\begin{theorem}\label{vanishing}
Let $f:A\hookrightarrow B$ be a subintegral extension of noetherian $\mathbb{Q}$-algebras. Then the following are true:
\begin{enumerate}
 \item  the natural map $f^{*}: \Br(A) \to \Br(B)$ is an isomorphism. 
 
 \item  $\Br(f)=0$.
\end{enumerate}
 
\end{theorem}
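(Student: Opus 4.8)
The plan is to reduce everything to two facts: (i) for a subintegral extension $f: A \hookrightarrow B$ of noetherian $\mathbb{Q}$-algebras the induced map $\Br(A) \to \Br(B)$ is an isomorphism, and (ii) the map $\Pic(A) \to \Pic(B)$ is surjective. Once we have both, part (1) is (i), and for part (2) we feed these into the exact sequence (\ref{Pic-Br}),
\begin{align*}
 \Pic(A) \stackrel{f^{*}}\to \Pic(B) \stackrel{\partial}\to \Br(f) \stackrel{\varrho}\to \Br(A) \stackrel{f^{*}}\to \Br(B):
\end{align*}
surjectivity of $f^{*}:\Pic(A)\to\Pic(B)$ forces $\partial = 0$, while injectivity of $f^{*}:\Br(A)\to\Br(B)$ forces $\varrho$ to be the zero map, hence $\Br(f) = \operatorname{im}(\partial) = 0$. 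So the whole content is proving (i) and (ii).

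For (ii), surjectivity of $\Pic(A)\to\Pic(B)$ for a subintegral extension is a standard consequence of the theory of seminormalization/subintegral extensions: the relative unit group and $\operatorname{Pic}$ of a subintegral extension are controlled by the conductor square, and in the noetherian $\mathbb{Q}$-algebra setting one knows $\operatorname{Pic}(A)\to\operatorname{Pic}(B)$ is onto (this is in the literature on subintegral extensions, e.g. \cite{swan}; it can also be extracted from the exact sequence for a conductor square together with the fact that $\operatorname{Spec}(B)\to\operatorname{Spec}(A)$ is a bijection with trivial residue extensions, which makes the relevant $K_1$/unit-group contributions die). I would cite this rather than reprove it.

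For (i), the substantive point, I would argue that $f^{*}:\Br(A)\to\Br(B)$ is both injective and surjective. Injectivity: an Azumaya $A$-algebra $\mathcal{A}$ with $\mathcal{A}\otimes_A B$ split, i.e. $\cong \operatorname{End}_B(P)$ for some f.g. projective $B$-module $P$; one wants to descend $P$ to an $A$-module (or at least descend the splitting). Surjectivity: every Azumaya $B$-algebra is, up to Brauer equivalence, extended from $A$. Both should follow from a Mayer–Vietoris / excision argument along the conductor square $\{A, B, A/\mathfrak{c}, B/\mathfrak{c}\}$ (with $\mathfrak{c}$ the conductor), using that $\operatorname{Spec}(B/\mathfrak{c})\to\operatorname{Spec}(A/\mathfrak{c})$ is again subintegral of smaller dimension, plus a noetherian induction on $\dim A$; the base of the induction is the case where $\operatorname{Spec}(A)$ is a point, handled by the fact that over a field (in characteristic $0$) a subintegral — hence trivial — extension changes nothing. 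The characteristic-$0$ hypothesis enters exactly here and in the Pic statement: in characteristic $p$ subintegral extensions like $k[t^p]\hookrightarrow k[t]$ are nontrivial for $\operatorname{Br}$, so one genuinely needs $\mathbb{Q}$-algebras.

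The main obstacle I anticipate is (i), specifically proving $f^{*}:\Br(A)\to\Br(B)$ is an isomorphism rather than just injective. Injectivity of $\Br$ under a faithfully flat (or even just conductor-square) extension is comparatively soft; surjectivity requires genuinely descending an Azumaya $B$-algebra, and I expect the cleanest route is to pass to the étale cohomological side — $\operatorname{Br}'(-)=H^2_{et}(-,\mathbb{G}_m)$ — and show that $\operatorname{Spec}(B)\to\operatorname{Spec}(A)$, being subintegral, induces an isomorphism on the small étale sites (a universal homeomorphism inducing isomorphisms on residue fields is, after the $\mathbb{Q}$-coefficients are in play, an isomorphism of étale topoi by topological invariance of the étale site), hence isomorphisms on all $H^i_{et}(-,\mathbb{G}_m)$, and then compare $\operatorname{Br}$ with $\operatorname{Br}'$. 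Controlling the gap between $\operatorname{Br}$ and $\operatorname{Br}'$ (the Gabber/de Jong results, as already invoked in Example \ref{torsion}) for these not-necessarily-regular rings is the delicate part, and is where I would expect to spend most of the effort.
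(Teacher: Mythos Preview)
Your overall architecture is the paper's: deduce (2) from (1) together with surjectivity of $\Pic(A)\to\Pic(B)$ via the sequence (\ref{Pic-Br}), cite the literature for the $\Pic$ statement (the paper uses Ischebeck \cite{is}), and attack (1) by passing to $\Br'=H^2_{et}(-,\mathbb{G}_m)$ and then invoking Gabber/de~Jong \cite{Djong} to return to $\Br$.

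The gap is in your route to $\Br'(A)\cong\Br'(B)$. Topological invariance of the \'etale site does give an equivalence of \'etale topoi for the universal homeomorphism $\operatorname{Spec}(B)\to\operatorname{Spec}(A)$, and this holds in \emph{any} characteristic, so it is not where the $\mathbb{Q}$-hypothesis enters. The problem is that $\mathbb{G}_m$ is not a topological sheaf: under the topos equivalence one obtains $H^i_{et}(B,\mathcal{O}_B^\times)\cong H^i_{et}(A,f_*\mathcal{O}_B^\times)$, not $H^i_{et}(A,\mathcal{O}_A^\times)$. Your claimed isomorphism in fact fails already for $i=1$ in characteristic~$0$: for $\mathbb{C}[t^2,t^3]\hookrightarrow\mathbb{C}[t]$ one has $\Pic(\mathbb{C}[t^2,t^3])\neq 0=\Pic(\mathbb{C}[t])$, so ``hence isomorphisms on all $H^i_{et}(-,\mathbb{G}_m)$'' is simply false. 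What must actually be shown is that the quotient sheaf $f_*\mathcal{O}_B^\times/\mathcal{O}_A^\times$ has vanishing $H^i_{et}$ on $\operatorname{Spec}(A)$ for $i\geq 1$; \emph{this} is where the $\mathbb{Q}$-hypothesis enters, via an exponential identifying that quotient with the coherent sheaf $f_*\mathcal{O}_B/\mathcal{O}_A$, which has no higher cohomology on an affine.

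The paper does exactly this, but with one additional reduction you omit: it first writes $B=\bigcup_\lambda B_\lambda$ as a filtered union of \emph{finite} subintegral extensions $A\hookrightarrow B_\lambda$ (via Swan \cite{swan}) and uses that $\Br$ commutes with filtered colimits, so it suffices to treat each $f_\lambda$. For finite subintegral $f_\lambda$ over $\mathbb{Q}$ it then cites \cite{VS1}, Proposition~5.4(3), which gives $H^i_{et}(\operatorname{Spec}(A),\mathcal{O}_A^\times)\cong H^i_{et}(\operatorname{Spec}(B_\lambda),\mathcal{O}_{B_\lambda}^\times)$ for $i>1$; taking $i=2$ and torsion, Gabber/de~Jong finishes (1). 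Your conductor-square/noetherian-induction sketch might be developable into an alternative proof of this cohomological step, but as written it does not replace it.
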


\begin{proof}
 (1) Since $f$ is subintegral, $B= \cup_{\lambda} B_{\lambda}$ where each $B_{\lambda}$ can be obtained by finite succession of elementary subintegral extension (i.e. $A\subset A[b]$ such that $b^2, b^3 \in A$) (see \cite{swan}). In other words, $f= \cup_{\lambda} f_{\lambda}$ where $f_{\lambda}: A\hookrightarrow B_{\lambda}$ is a finite map. For each $\lambda,$ we have the following two exact sequences
 $$ 0 \to \ker (f_{\lambda}^{*}) \to \Br(A) \stackrel{f_{\lambda}^{*}}\to \Br(B_{\lambda}), ~~   \Br(A) \stackrel{f_{\lambda}^{*}}\to \Br(B_{\lambda})\to \coker (f_{\lambda}^{*}) \to 0.$$
 
 As $\Br$ commutes with filtered limit of rings, $\Br(B)\cong \varinjlim_{\lambda} \Br(B_{\lambda}).$ Thus we get $\ker (f^{*})\cong \varinjlim_{\lambda} \ker (f_{\lambda}^{*})$ and  $\coker (f^{*})\cong \varinjlim_{\lambda} \coker (f_{\lambda}^{*}).$ So it is enough to show that $ \ker (f_{\lambda}^{*})=0= \coker (f_{\lambda}^{*})$ for each $\lambda.$

 By Proposition 5.4(3) of \cite{VS1}, $H_{et}^{i}({\rm Spec}(A), \mathcal{O}_{A}^{\times})\cong  H_{et}^{i}({\rm Spec}(B_{\lambda}), \mathcal{O}_{B_{\lambda}}^{\times})$ for all $i>1$ because $f_{\lambda}$ is a finite subintegral extension of $\mathbb{Q}$-algebras. In particular, we get $\Br^{'}(A)\cong \Br^{'}(B_{\lambda})$. Then the torsion subgroups $\Br^{'}(A)_{tor}$ and $\Br^{'}(B_{\lambda})_{tor}$ are also isomorphic. Therefore, the assertion follows from the following commutative diagram
 $$\begin{CD}
  \Br(A)@> f_{\lambda}^{*}>> \Br(B_{\lambda}) \\
  @VVV   @VVV\\
  \Br^{'}(A)_{tor} @>\cong>> \Br^{'}(B_{\lambda})_{tor},
 \end{CD}$$
 where the vertical maps are isomorphisms by a theorem of Gabber (see \cite{Djong}).
 
 (2) Since $f$ is subintegral, the map $\Pic({\it A})\rightarrow \Pic({\it B})$ is surjective by Proposition 7 of \cite{is}. Hence the result by (1) and the sequence (\ref{Pic-Br}). 
 \end{proof}
 
 Given an extension $f: A\hookrightarrow B,$ write $^{^+}\!\!\!f$ for the induced map $ ^{^+}\!\!\!A \hookrightarrow B$ where  $^{^+}\!\!\!A$ is the subintegral closure of $A$ in $B.$ 
 
 \begin{corollary}\label{rel Br same upto seminor}
  Let $f:A\hookrightarrow B$ be an extension of noetherian $\mathbb{Q}$-algebras. Then $\Br(f)\cong \Br(^{^+}\!\!\!f).$
 \end{corollary}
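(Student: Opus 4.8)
The plan is to factor the extension $f: A\hookrightarrow B$ through its subintegral closure $^{^+}\!\!\!A$ and compare the relative Brauer groups using the exact sequence (\ref{Pic-Br}) together with Theorem \ref{vanishing}. Write $g: A\hookrightarrow\, ^{^+}\!\!\!A$ for the inclusion of the subintegral closure, so that $f$ is the composite $A\hookrightarrow\, ^{^+}\!\!\!A \xrightarrow{^{^+}\!\!\!f} B$. By definition $g$ is a subintegral extension of noetherian $\mathbb{Q}$-algebras, so Theorem \ref{vanishing} applies to $g$: the map $g^{*}:\Br(A)\to\Br(^{^+}\!\!\!A)$ is an isomorphism and $\Br(g)=0$. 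The idea is that "subintegral part contributes nothing to the relative Brauer group," so passing from $f$ to $^{^+}\!\!\!f$ should not change $\Br$.

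First I would set up the five-term exact sequence (\ref{Pic-Br}) for each of the three maps $f$, $g$, and $^{^+}\!\!\!f$, and relate them via the functoriality of the sequence in the scheme maps. Concretely, the composite $\mathrm{Spec}(B)\to\mathrm{Spec}(^{^+}\!\!\!A)\to\mathrm{Spec}(A)$ induces compatible maps of the five-term sequences, giving a commutative ladder whose rows are (\ref{Pic-Br}) for $g$ and for $f$ (with a map $\Br(g)\to\Br(f)$ induced by $\eta: g\to f$ in the relevant comma-category sense, exactly as the maps $\gamma_i$ were produced in the proof of Theorem \ref{natural map}). Then I would use Theorem \ref{vanishing}(1) to identify $\Br(A)\xrightarrow{\cong}\Br(^{^+}\!\!\!A)$, which kills the difference between the $\Br(S)$-terms for $f$ and for $^{^+}\!\!\!f$; and I would use the surjectivity of $\Pic(A)\to\Pic(^{^+}\!\!\!A)$ (Proposition 7 of \cite{is}, as used in the proof of Theorem \ref{vanishing}(2), since $g$ is subintegral) to control the Picard terms. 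A short diagram chase on the ladder
\begin{align*}
\Pic(A)\to\Pic(^{^+}\!\!\!A)\to\Br(g)\to\Br(A)\to\Br(^{^+}\!\!\!A)\\
\Pic(A)\to\Pic(B)\to\Br(f)\to\Br(A)\to\Br(B)
\end{align*}
together with the analogous sequence for $^{^+}\!\!\!f$ should then yield $\Br(f)\cong\Br(^{^+}\!\!\!f)$; the cleanest route is probably to observe that $\Br(f)$ fits in the exact sequence $\Pic(A)\to\Pic(B)\to\Br(f)\to\ker(\Br(A)\to\Br(B))\to 0$, and likewise $\Br(^{^+}\!\!\!f)$ fits in $\Pic(^{^+}\!\!\!A)\to\Pic(B)\to\Br(^{^+}\!\!\!f)\to\ker(\Br(^{^+}\!\!\!A)\to\Br(B))\to 0$, then use $\Pic(A)\twoheadrightarrow\Pic(^{^+}\!\!\!A)$ (so the image of $\Pic(A)$ in $\Pic(B)$ equals the image of $\Pic(^{^+}\!\!\!A)$) and $\Br(A)\xrightarrow{\cong}\Br(^{^+}\!\!\!A)$ (so the two kernels match under a compatible identification) to conclude the cokernels-cum-extensions agree.

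The main obstacle I anticipate is making the comparison map $\Br(^{^+}\!\!\!f)\to\Br(f)$ (or its inverse) genuinely compatible with both the $\Br$ and the $\Pic$ ends of the sequence simultaneously — i.e. checking that the ladder really commutes, not just up to the isomorphisms we want. This is a functoriality bookkeeping issue: one must verify that the isomorphism $g^{*}:\Br(A)\xrightarrow{\cong}\Br(^{^+}\!\!\!A)$ from Theorem \ref{vanishing} is the same map that appears in the five-term sequence, and that the induced map on relative Brauer groups $K_0(Az(f^{*}))$-style is well behaved with respect to composing the two extensions. Once the ladder is in place, the five-lemma (or a direct chase) finishes it; I do not expect any further subtlety, since everything reduces to the two inputs already established for subintegral maps: $\Pic$ surjects and $\Br$ is an isomorphism.
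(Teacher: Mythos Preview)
Your proposal is correct and is essentially the paper's own argument: the paper's proof is the single sentence ``By comparing $\Br$--$\Pic$ sequences (see (\ref{Pic-Br})) for $f$ and $^{^+}\!\!\!f$, we get the result,'' which is precisely your ``cleanest route'' --- stack the five-term sequences for $f$ and $^{^+}\!\!\!f$, use $\Pic(A)\twoheadrightarrow\Pic({}^{^+}\!\!\!A)$ and $\Br(A)\xrightarrow{\cong}\Br({}^{^+}\!\!\!A)$ from Theorem~\ref{vanishing} (and its proof), and apply the five lemma. Your preliminary detour through the sequence for $g$ and the ladder for $g$ and $f$ is unnecessary; the direct comparison of $f$ with ${}^{^+}\!\!\!f$ already suffices, and the functoriality worries you flag are routine once you note the comparison map is induced by the morphism $(\mathrm{id}_B, g):f\to{}^{^+}\!\!\!f$ of arrows.
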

 \begin{proof}
  By comparing $\Br$-$\Pic$ sequences (see (\ref{Pic-Br})) for $f$ and $^{^+}\!\!\!f,$ we get the result. \end{proof}

We say that a faithful affine map $f: X \to S$ is {\it subintegral} if $\mathcal{O}_{S}(U)\to f_{*}\mathcal{O}_{X}(U)$ is subintegral for all affine open subsets $U$ of $S$ (see Definition 5.1 of \cite{VS1}).
 
 \begin{remark}\label{not true for nonaffine}(Cf. Example 6.6 of \cite{SW1}) \rm{
  Proposition \ref{vanishing}(2) is not true for non-affine schemes. For example, consider $S= \mathbb{P}_{\mathbb{Q}}^{1}$ and $X= \rm{{\rm Spec}} (\mathcal{O}_{B})$ where $\mathcal{O}_{B}= \mathcal{O}_{S}\oplus \mathcal{O}_{S}(-2)$ with $\mathcal{O}_{S}(-2)$ being a square zero ideal. Then $f: X \to S$ is a subintegral map and  $H=H^1(\mathbb{P}^1,\mathcal{O}_{S}(-2))$ is nonzero.
We also have $\Pic({\it X})=\Pic({\it S)\oplus H}.$ Therefore, $\Pic({\it S}) \to \Pic({\it X})$ is not surjective. Hence $\Br(f) \neq 0$ by the sequence (\ref{Pic-Br}). }\end{remark}

\section{Relative Kummer's sequence}\label{kummer}

For a commutative ring $A,$ let $\mu_{n}(A):= \ker [A^{\times} \stackrel{n} \to A^{\times}]$ and for a ring extension $f: A \hookrightarrow B,$ let $\mu_{n}(f):= \ker [\Pic({\it f}) \stackrel{{\it n}} \to \Pic({\it f})].$ We can identify $\Pic({\it f})$ with $I(f),$ the multiplicative group of invertible $A$-submodules of $B$ by Lemma 1.2 of \cite{SW1}. Some details related to $I(f)$ can be found in section 2 of \cite{rs}.
 
 \begin{lemma}\label{Kummer sequence locally}
  Let $f: A \hookrightarrow B$ be a finite ring extension. Assume that $A$ is a strictly hensel local ring with residue field $k$ and characteristic of $k$ does not divide $n.$ Then the following sequences 
  $$  0 \to \mu_{n}(A) \to \mu_{n}(B) \to \mu_{n}(f)\to 0,$$  $$  0 \to \mu_{n}(f) \to \Pic({\it f}) \stackrel{{\it n}} \to \Pic({\it f}) \to 0$$ are exact. 
   \end{lemma}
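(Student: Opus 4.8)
The plan is to prove both exact sequences simultaneously by exploiting the fact that, for $A$ a strictly henselian local ring, $\Pic({\it A}) = 0$ and hence the fundamental sequence (\ref{U-Pic}) for $f$ degenerates: it reads
\begin{align*}
 1 \to A^{\times} \to B^{\times} \to \Pic({\it f}) \to 0,
\end{align*}
since $\Pic({\it A}) \to \Pic({\it B})$ has trivial source (indeed $B$ is a finite product of strictly henselian local rings by Example \ref{iso in hensel}, so $\Pic({\it B}) = 0$ too). Thus $\Pic({\it f}) \cong B^{\times}/A^{\times}$ as abelian groups. The first sequence is then nothing but the snake-lemma sequence obtained by multiplying this short exact sequence by $n$; the key point needed is that multiplication by $n$ is \emph{surjective} on $A^{\times}$ and on $B^{\times}$.

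First I would establish surjectivity of $A^{\times} \stackrel{n}\to A^{\times}$. Since $A$ is strictly henselian local with residue field $k$ of characteristic not dividing $n$, the polynomial $T^{n} - u$ for a unit $u$ has derivative $nT^{n-1}$, which is a unit at any root in $k^{\times} = \bar k^{\times}$ (as $k$ is separably closed, such a root exists); Hensel's lemma lifts it to a root in $A^{\times}$. So $A^{\times}$ is $n$-divisible, hence $\mu_{n}(A) = A^{\times}[n]$ fits in $0 \to \mu_{n}(A) \to A^{\times} \stackrel{n}\to A^{\times} \to 0$. The same argument applies componentwise to $B$, which is a finite product of strictly henselian local rings (each with residue field of characteristic prime to $n$, since residue characteristics of $B$ are residue characteristics of $A$), giving $0 \to \mu_{n}(B) \to B^{\times} \stackrel{n}\to B^{\times} \to 0$. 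Now apply the snake lemma to the ladder
\begin{align*}
 \begin{CD}
  1 @>>> A^{\times} @>>> B^{\times} @>>> \Pic({\it f}) @>>> 0 \\
  @. @VV n V @VV n V @VV n V @. \\
  1 @>>> A^{\times} @>>> B^{\times} @>>> \Pic({\it f}) @>>> 0.
 \end{CD}
\end{align*}
The cokernels of the first two vertical maps vanish; the snake sequence therefore reads $0 \to \mu_{n}(A) \to \mu_{n}(B) \to \mu_{n}(f) \to \coker(A^{\times}\stackrel n\to A^{\times}) = 0$, which is precisely the first asserted sequence, and simultaneously forces $\mu_{n}(f) = \Pic({\it f})[n]$ together with $\Pic({\it f}) \stackrel{n}\to \Pic({\it f})$ being surjective (the latter because $B^{\times} \stackrel n\to B^{\times}$ is surjective and surjects onto $\Pic({\it f})$). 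That surjectivity plus the definition $\mu_{n}(f) = \ker[\Pic({\it f})\stackrel n\to \Pic({\it f})]$ is exactly the second asserted sequence.

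The main obstacle is the divisibility input: one must be careful that strict henselianity (not just henselianity) is what guarantees $n$-divisibility of the unit group, and that this property is inherited by each local factor of $B$. Once $n$-divisibility of $A^{\times}$ and $B^{\times}$ is in hand, both sequences are formal consequences of the degenerate form of (\ref{U-Pic}) via the snake lemma, so no further calculation is needed. I would also remark that the identification $\Pic({\it f}) \cong I(f)$ from Lemma 1.2 of \cite{SW1} mentioned just before the statement is not needed for the proof but makes the statement concrete.
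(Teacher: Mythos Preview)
Your argument is correct and matches the paper's proof essentially line for line: the paper also observes that $B$ is a finite product of strictly henselian local rings, invokes the exactness of $0 \to \mu_{n}(A) \to A^{\times} \stackrel{n}\to A^{\times}\to 0$ for such rings, and then reads off both sequences from the same snake-lemma diagram built on the degenerate form of (\ref{U-Pic}). Your version is simply more explicit about why $A^{\times}$ is $n$-divisible and about the snake-lemma bookkeeping.
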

   
   \begin{proof}
    Since $B$ is finite over $A,$ $B$ is finite product of strictly hensel local rings. We know that for a strictly hensel local ring $A,$ the sequence 
    $$ 0 \to \mu_{n}(A) \to A^{\times} \stackrel{n}\to A^{\times}\to 0$$ is exact.
    Now the result follows from the following commutative diagram
    $$ \begin{CD}
        @.  0 @. 0 \\
           @. @VVV  @VVV   \\
        @.  \mu_{n}(A) @>>> \mu_{n}(B) @>>> \mu_{n}(f) \\
            @.  @VVV    @VVV   @VVV \\
        0 @>>> A^{\times} @>>> B^{\times} @>>> \Pic({\it f}) @>>> 0 \\
             @. @V n VV   @V n VV    @V n VV  \\
        0 @>>> A^{\times} @>>>B^{\times} @>>> \Pic({\it f}) @>>> 0 \\
         @.  @VVV   @VVV    \\
        @. 0 @. 0,
       \end{CD}$$ where the left two columns and  the bottom two rows are short exact sequences (see (\ref{U-Pic})). \end{proof}
       
      Let $f: X \to S$ be a faithful affine map of schemes. Recall that  $\mathcal{P}ic^{f}_{et}$ is the \'etale sheafification of the presheaf  $U\mapsto \Pic({\it f_{U}})$ on $S_{et},$ where $ f_{U}: X\times_S U \to U.$ For notational convenience, we prefer to write $\mathcal{I}_{et}$ instead of $\mathcal{P}ic^{f}_{et}\cong (f_{*}\mathcal{O}_{X}^{\times}/\mathcal{O}_{S}^{\times})_{et}$ (see Lemma \ref{same as etale}). 
      
      We write $\mathcal{\mu}_{n, X}$ (or simply $\mathcal{\mu}_{n}$) for the kernel of $\mathcal{O}_{X, et}^{\times} \stackrel{n}\to \mathcal{O}_{X, et}^{\times}.$ Similarly, $\mathcal{\mu}_{n}^{f}$ denotes the kernel of $\mathcal{I}_{et} \stackrel{n}\to \mathcal{I}_{et}.$

\begin{proposition}\label{Kummer sequence}
 Let $f: X \to S$ be a faithful finite map of schemes. Assume that the characteristic of $k(s)$ does not divide $n$ for any $s\in S.$ Then the following sequences
 \begin{align*}
  0 \to  \mu_{n} \to f_{*}\mu_{n} \to \mu_{n}^{f} \to 0,
 \end{align*}
 
 \begin{align*}
   0 \to \mu_{n}^{f} \to \mathcal{I}_{et}\stackrel{n}\to \mathcal{I}_{et} \to 0 
 \end{align*}

 of \'etale sheaves on $S_{et}$ are exact.
\end{proposition}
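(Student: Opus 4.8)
\emph{Here is the plan.} Exactness of a complex of \'etale sheaves on $S_{et}$ can be tested on stalks at geometric points, so I would fix a geometric point $\bar s$ of $S$ lying over a point $s\in S$, set $A=\mathcal{O}_{S,\bar s}^{\mathrm{sh}}$ (a strictly hensel local ring, whose residue field has the same characteristic as $k(s)$, hence prime to $n$), and reduce both statements to Lemma \ref{Kummer sequence locally}. Since $f$ is finite, hence affine, the base change $X\times_S\operatorname{Spec}(A)$ is $\operatorname{Spec}(B)$ with $B$ finite over $A$, so $B$ is a finite product of strictly hensel local rings (one for each geometric point of $X$ over $\bar s$). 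Moreover $A\hookrightarrow B$ is still injective: $A$ is flat over the coordinate ring of an affine open neighbourhood of $s$, and injectivity of the structure map survives flat base change. Thus the pair $(A,B)$ satisfies the hypotheses of Lemma \ref{Kummer sequence locally}.

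\emph{Next, identify the relevant stalks.} Because $f$ is finite, the stalk at $\bar s$ of a pushforward is the product over the geometric points of the fibre, so $(f_*\mathcal{O}_{X,et}^{\times})_{\bar s}=B^{\times}$ and $(f_*\mu_{n,X})_{\bar s}=\mu_n(B)$; and trivially $(\mathcal{O}_{S,et}^{\times})_{\bar s}=A^{\times}$ and $(\mu_{n,S})_{\bar s}=\mu_n(A)$. For $\mathcal{I}_{et}\cong\mathcal{P}ic^{f}_{et}$ (Lemma \ref{same as etale}), the sheafification of $U\mapsto\Pic(f_U)$, the stalk is $\varinjlim_U\Pic(f_U)$ over \'etale neighbourhoods of $\bar s$; since the terms $\mathcal{O}^{\times}$ and $\Pic$ appearing in the sequence (\ref{U-Pic}) all commute with filtered colimits of rings, so does $\Pic(f_{-})$, whence $(\mathcal{I}_{et})_{\bar s}=\Pic(A\hookrightarrow B)$, the relative Picard group of the base-changed extension. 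As taking stalks is exact, $(\mu_n^{f})_{\bar s}=\ker[\Pic(A\hookrightarrow B)\stackrel{n}{\to}\Pic(A\hookrightarrow B)]=\mu_n(A\hookrightarrow B)$.

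\emph{Then conclude.} With these identifications, the stalk at $\bar s$ of the first sequence is $0\to\mu_n(A)\to\mu_n(B)\to\mu_n(A\hookrightarrow B)\to0$ and that of the second is $0\to\mu_n(A\hookrightarrow B)\to\Pic(A\hookrightarrow B)\stackrel{n}{\to}\Pic(A\hookrightarrow B)\to0$; both are exact by Lemma \ref{Kummer sequence locally}. Since $\bar s$ was arbitrary, the two sequences of \'etale sheaves on $S_{et}$ are exact.

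\emph{The delicate step} is the identification $(\mathcal{I}_{et})_{\bar s}\cong\Pic(A\hookrightarrow B)$, i.e.\ checking that $\Pic(f_{-})$ genuinely commutes with the filtered colimit computing the strict henselization and that faithfulness is not lost in the base change. A variant bypassing Lemma \ref{Kummer sequence locally} altogether: the Kummer sequence $1\to\mu_{n,X}\to\mathcal{O}_{X,et}^{\times}\stackrel{n}{\to}\mathcal{O}_{X,et}^{\times}\to1$ is exact because residue fields of $X$ are finite over those of $S$, hence of characteristic prime to $n$; applying $f_*$, which is exact since $f$ is finite, and comparing the resulting sequence, via multiplication by $n$, with the Kummer sequence on $S$ through the short exact sequence $1\to\mathcal{O}_{S,et}^{\times}\to f_*\mathcal{O}_{X,et}^{\times}\to\mathcal{I}_{et}\to1$ (exact because $\mathcal{I}_{et}$ is the sheafification of the presheaf cokernel and sheafification is exact), the snake lemma produces both desired short exact sequences at once: the cokernels of $n$ on $\mathcal{O}_{S,et}^{\times}$ and on $f_*\mathcal{O}_{X,et}^{\times}$ vanish, so the connecting map is zero and $n$ is onto $\mathcal{I}_{et}$.
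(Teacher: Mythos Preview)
Your main argument is correct and follows exactly the paper's approach: the paper's proof is the single line ``Clear from Lemma \ref{Kummer sequence locally},'' and you have spelled out the stalk identifications that make this reduction work. Your alternative via exactness of $f_*$ for finite $f$ and the snake lemma is also correct and is a pleasant bonus, though not the route the paper takes.
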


\begin{proof}
 Clear from Lemma \ref{Kummer sequence locally}.
\end{proof}

From the Proposition \ref{Kummer sequence}, we obtain the following two long exact sequences
 \begin{align}\label{kummer les1}
  \dots \to  H_{et}^{i}(S, \mu_{n})\to  H_{et}^{i}(X, \mu_{n}) \to H_{et}^{i}(S, \mu_{n}^{f})\to  H_{et}^{i+1}(S, \mu_{n}) \to \dots 
 \end{align}
 
 \begin{align}\label{kummer les2}
  \dots \to H_{et}^{i}(S, \mu_{n}^{f}) \to  H_{et}^{i}(S, \mathcal{I}_{et}) \stackrel{n}\to  H_{et}^{i}(S, \mathcal{I}_{et})\to  H_{et}^{i+1}(S, \mu_{n}^{f})\to \dots
 \end{align}
For a group $G,$ we denote the subgroup of elements of order dividing $n$ in $G$ by $_nG.$
 
\begin{theorem}\label{cohomology of roots unity}
 Let $f: X \to S$ be a faithful finite map of schemes. Assume that the characteristic of $k(s)$ does not divide $n$ for any $s\in S.$ 
 Then
 \begin{enumerate}
  \item $H_{et}^{0}(S, \mu_{n}^{f})\cong {_n\Pic({\it f})}$ and the sequence 
  \begin{align}\label{kummer ses}
    0\to \Pic({\it f}) \otimes \mathbb{Z}/{\it n}\mathbb{Z} \to { \it H_{et}^{1}(S, \mu_{n}^{f}) \to  {{\it _n{H}}_{et}^{1}(S, \mathcal{I}_{et})}} \to 0
  \end{align}
  is exact.

  \item if $f: L \hookrightarrow K$ is a finite field extensions and characteristic of $L$ does not divide $n$ then the sequence
  $$ 0 \to K^{\times}/L^{\times}\otimes \mathbb{Z}/n\mathbb{Z} \to H_{et}^{1}({\rm Spec}(L), \mu_{n}^{f}) \to  {_n\Br(K|L)} \to 0$$ is exact.
 \end{enumerate}
\end{theorem}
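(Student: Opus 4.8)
The plan is to derive both parts from the two long exact sequences (\ref{kummer les1}) and (\ref{kummer les2}) together with the identification $H^{0}_{et}(S,\mathcal{I}_{et})\cong \Pic({\it f})$ (which follows from Lemma \ref{same as etale}, since $\mathcal{I}_{et}\cong \mathcal{P}ic^{f}_{et}$ and global sections of the sheafification agree with $\Pic({\it f})$ for $S$ affine, or more generally via the sequence (\ref{U-Pic})). For part (1), I would first take the beginning of the long exact sequence (\ref{kummer les2}) in degree $0$:
\[
0\to H^{0}_{et}(S,\mu_{n}^{f})\to H^{0}_{et}(S,\mathcal{I}_{et})\stackrel{n}\to H^{0}_{et}(S,\mathcal{I}_{et}),
\]
which exhibits $H^{0}_{et}(S,\mu_{n}^{f})$ as the kernel of multiplication by $n$ on $H^{0}_{et}(S,\mathcal{I}_{et})\cong\Pic({\it f})$, i.e.\ as ${_n\Pic({\it f})}$. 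Then I would continue the same sequence one step further:
\[
H^{0}_{et}(S,\mathcal{I}_{et})\stackrel{n}\to H^{0}_{et}(S,\mathcal{I}_{et})\to H^{1}_{et}(S,\mu_{n}^{f})\to H^{1}_{et}(S,\mathcal{I}_{et})\stackrel{n}\to H^{1}_{et}(S,\mathcal{I}_{et}).
\]
The cokernel of the first map is $\Pic({\it f})\otimes\mathbb{Z}/n\mathbb{Z}$, and the kernel of the last map is ${_n H^{1}_{et}(S,\mathcal{I}_{et})}$; extracting the four-term exact piece yields exactly the short exact sequence (\ref{kummer ses}).

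For part (2), I would specialize to $S=\mathrm{Spec}(L)$, $X=\mathrm{Spec}(K)$ with $f:L\hookrightarrow K$ a finite field extension. Here $\Pic({\it f})\cong K^{\times}/L^{\times}$ by the exact sequence (\ref{U-Pic}) (since $\Pic(L)=\Pic(K)=0$ gives $\Pic({\it f})\cong \mathcal{O}^{\times}(X)/\mathcal{O}^{\times}(S)=K^{\times}/L^{\times}$), and $\mathcal{I}_{et}$ is the étale sheaf $f_{*}\mathcal{O}^{\times}_{K}/\mathcal{O}^{\times}_{L}$, so by Lemma \ref{rel} we have $H^{1}_{et}(S,\mathcal{I}_{et})\cong\Br(K|L)$. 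Substituting these identifications into the sequence (\ref{kummer ses}) from part (1) turns it into
\[
0\to K^{\times}/L^{\times}\otimes\mathbb{Z}/n\mathbb{Z}\to H^{1}_{et}(\mathrm{Spec}(L),\mu_{n}^{f})\to {_n\Br(K|L)}\to 0,
\]
which is the claim. One should note the hypothesis that the characteristic of $k(s)$ does not divide $n$ for any $s\in S$ is automatically satisfied in the field case once we assume $\mathrm{char}(L)\nmid n$, since $K/L$ being a finite extension forces $\mathrm{char}(K)=\mathrm{char}(L)$.

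The only genuine content beyond bookkeeping is making sure the ingredients are legitimately available: that Proposition \ref{Kummer sequence} applies (it needs $f$ faithful finite and the characteristic condition, both assumed), that $H^{0}_{et}(S,\mathcal{I}_{et})$ really is $\Pic({\it f})$ and not just its sheafification's sections, and that the maps in the four-term extraction are the natural ones so that the identification of the cokernel with $\Pic({\it f})\otimes\mathbb{Z}/n\mathbb{Z}$ is canonical. I expect the main (though minor) obstacle to be the first of these: one must check that taking global sections of $\mathcal{I}_{et}=(f_{*}\mathcal{O}_{X}^{\times}/\mathcal{O}_{S}^{\times})_{et}$ recovers $\Pic({\it f})$ rather than something larger. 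This follows because the presheaf $\Pic^{f}$ already satisfies the sheaf condition in degree zero for the étale topology on a connected affine $S$ — or, more robustly, one reduces to connected components as in the proof of Theorem \ref{natural map} and uses the left-exactness of (\ref{U-Pic}) together with Lemma \ref{same as etale}. Once that is in hand, both parts are immediate diagram chases through (\ref{kummer les2}).
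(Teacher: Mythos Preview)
Your proposal is correct and follows essentially the same route as the paper: both parts are read off from the long exact sequence (\ref{kummer les2}) once one knows $H^{0}_{et}(S,\mathcal{I}_{et})\cong\Pic({\it f})$, and part (2) is obtained by specializing via (\ref{U-Pic}) and Lemma \ref{rel}. The only minor difference is that the paper disposes of the identification $H^{0}_{et}(S,\mathcal{I}_{et})\cong\Pic({\it f})$ by citing Lemma 5.4 of \cite{SW} directly, rather than arguing it from Lemma \ref{same as etale} and (\ref{U-Pic}) as you suggest; your concern about this step is therefore warranted but already handled in the literature.
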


\begin{proof}

 (1) By Lemma 5.4 of \cite{SW}, $H_{et}^{0}(S, \mathcal{I}_{et})\cong \Pic({\it f}).$ The long exact sequence (\ref{kummer les2}) implies the assertion.

 (2) Note that $\Pic({\it f)\cong K^{\times}/L^{\times}}$ by (\ref{U-Pic}) and   $H_{et}^{1}({\rm Spec}(L), f_{*}\mathcal{O}^{\times}_K/\mathcal{O}^{\times}_{L})\cong\Br(K|L)$ by Lemma  \ref{rel}.  Hence the result by the short exact sequence (\ref{kummer ses}). \end{proof}
 
    Let  $\mathcal{P}ic^{{\it f}}_{zar}$ be the Zariski sheaf associated to the presheaf $\Pic^{{\it f}}$ on $S_{zar},$ defined as $\Pic^{{\it f}} ({\it U}) = \Pic({\it f_{U}}).$ A proof similar to Lemma \ref{same as etale} shows that $\mathcal{P}ic^{f}_{zar}\cong (f_{*}\mathcal{O}_{X}^{\times}/\mathcal{O}_{S}^{\times})_{zar}.$ We write $\mathcal{I}_{zar}$ for the Zariski sheaf $\mathcal{P}ic^{f}_{zar}\cong (f_{*}\mathcal{O}_{X}^{\times}/\mathcal{O}_{S}^{\times})_{zar}.$

\begin{theorem}\label{cohomology for subintegral map}
 Let $f: X \to S$ be a faithful finite map of noetherian schemes over $\mathbb{Q}.$
 \begin{enumerate}
  \item  If $f$ is a subintegral extension of affine schemes then $H_{et}^{i}(S, \mu_{n}^{f})=0$ for $i\geq 0.$ Moreover, $H_{et}^{i}(S, \mu_{n})\cong H_{et}^{i}(X, \mu_{n})$ for $i\geq 0.$
  \item If $f$ is subintegral and $S$ is a projective $\mathbb{Q}$-scheme then $H_{et}^{1}(S, \mu_{n}^{f})\cong  {_nH_{et}^{1}(S, \mathcal{I}_{et}}).$
  
 \end{enumerate}
\end{theorem}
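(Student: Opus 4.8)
The plan is to reduce everything to the two long exact sequences (\ref{kummer les1}) and (\ref{kummer les2}) and feed in the subintegral vanishing results already available. For part (1), since $f$ is a finite subintegral extension of affine $\mathbb{Q}$-schemes, say $S=\mathrm{Spec}(A)$ and $X=\mathrm{Spec}(B)$, the key input is that $f_*\mathcal{O}_X^{\times}/\mathcal{O}_S^{\times}$ has vanishing higher \'etale cohomology. Concretely, Proposition 5.4(3) of \cite{VS1} (already invoked in the proof of Theorem \ref{vanishing}) gives $H^i_{et}(S,\mathcal{O}_S^{\times})\cong H^i_{et}(X,\mathcal{O}_X^{\times})$ for all $i>1$, and for $i=0,1$ the statement $\Pic(f)=0$ together with surjectivity of $\mathcal{O}^{\times}(S)\to\mathcal{O}^{\times}(X)$ (which follows from subintegrality and the sequence (\ref{U-Pic}), cf. the argument in Theorem \ref{vanishing}(2)) pins down the low-degree terms. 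The short exact sequence of \'etale sheaves $1\to\mathcal{O}_{S,et}^{\times}\to f_*\mathcal{O}_{X,et}^{\times}\to\mathcal{I}_{et}\to 1$ then shows, via its long exact cohomology sequence, that $H^i_{et}(S,\mathcal{I}_{et})=0$ for all $i\geq 0$. (One must be slightly careful at $i=0,1$: the vanishing of $\Pic(f)$ handles $H^0$, and then $H^1_{et}(S,\mathcal{O}_S^{\times})\cong H^1_{et}(X,\mathcal{O}_X^{\times})$ being an isomorphism forces $H^1_{et}(S,\mathcal{I}_{et})=0$; for $i\geq 2$ it is immediate from the isomorphisms above.) Once $H^i_{et}(S,\mathcal{I}_{et})=0$ for all $i\geq 0$, the long exact sequence (\ref{kummer les2}) immediately yields $H^i_{et}(S,\mu_n^f)=0$ for all $i\geq 0$. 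Finally, the "moreover" clause $H^i_{et}(S,\mu_n)\cong H^i_{et}(X,\mu_n)$ for $i\geq 0$ follows by feeding this vanishing into the long exact sequence (\ref{kummer les1}), since then $H^i_{et}(S,\mu_n)\to H^i_{et}(X,\mu_n)$ has zero kernel and cokernel at every stage.

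For part (2), $S$ is a projective $\mathbb{Q}$-scheme (not necessarily affine), so the vanishing of all higher cohomology of $\mathcal{I}_{et}$ is no longer available — indeed Remark \ref{not true for nonaffine} shows $\Pic(f)$ and $H^1$ can be nonzero. Instead I would extract from (\ref{kummer les2}) the exact sequence
\begin{align*}
  0 \to \Pic(f)/n\Pic(f) \to H^1_{et}(S,\mu_n^f) \to {}_nH^1_{et}(S,\mathcal{I}_{et}) \to 0,
\end{align*}
exactly as in Theorem \ref{cohomology of roots unity}(1), using $H^0_{et}(S,\mathcal{I}_{et})\cong\Pic(f)$ (Lemma 5.4 of \cite{SW} applies since $\mathcal{I}_{et}$ is the sheafification of $U\mapsto\Pic(f_U)$ and $H^0$ of a sheafification recovers the separated presheaf's global sections here). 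It then suffices to show $\Pic(f)/n\Pic(f)=0$, i.e. that $\Pic(f)$ is $n$-divisible. For $S$ projective over $\mathbb{Q}$ and $f$ subintegral, the relative Picard group $\Pic(f)$ is computed by the exact sequence (\ref{U-Pic}); subintegrality gives surjectivity of $\Pic(S)\to\Pic(X)$ (Proposition 7 of \cite{is}), so $\Pic(f)\cong \mathcal{O}^{\times}(X)/\mathcal{O}^{\times}(S)$, and since $S$ is projective over a field, $\mathcal{O}^{\times}(S)$ and $\mathcal{O}^{\times}(X)$ are finite extensions of $\mathbb{Q}^{\times}$-type groups — more precisely, for $S$ a reduced connected projective $\mathbb{Q}$-scheme $\mathcal{O}^{\times}(S)$ is the unit group of a finite field extension of $\mathbb{Q}$, hence the quotient $\mathcal{O}^{\times}(X)/\mathcal{O}^{\times}(S)$ is a divisible group (a quotient of divisible-by-a-$\mathbb{Q}$-vector-space type data), so $n$-divisible. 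Thus $\Pic(f)\otimes\mathbb{Z}/n\mathbb{Z}=0$ and the displayed sequence collapses to the claimed isomorphism $H^1_{et}(S,\mu_n^f)\cong {}_nH^1_{et}(S,\mathcal{I}_{et})$.

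The main obstacle I anticipate is the $n$-divisibility of $\Pic(f)$ in part (2): one needs to argue carefully that over a projective $\mathbb{Q}$-scheme the relevant unit groups — and hence the relative Picard group after identifying it via (\ref{U-Pic}) and subintegrality — are divisible, which uses both that the base field is $\mathbb{Q}$ (so roots of units exist after adjoining, but we stay over $\mathbb{Q}$ — here the cleaner route is that $\mathcal{O}^{\times}(S)$ sits in an exact sequence $1\to(\text{finite})\to\mathcal{O}^{\times}(S)\to(\text{free abelian})\to 1$ only in bad cases, and for $S$ projective over $\mathbb{Q}$ connected reduced it is literally $K^{\times}$ for a number field $K$, which is \emph{not} $n$-divisible) — so in fact the correct statement must be that it is the \emph{relative} group $\mathcal{O}^{\times}(X)/\mathcal{O}^{\times}(S)$ that is $n$-divisible, which requires that the finite subintegral extension does not enlarge the "arithmetic" part of the units, only the "infinitesimal/nilpotent" part, and that part is a $\mathbb{Q}$-vector space (as in Remark \ref{not true for nonaffine}, where $H=H^1(\mathbb{P}^1,\mathcal{O}(-2))$ is a $\mathbb{Q}$-vector space). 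Making this precise — showing $\Pic(f)$ is a $\mathbb{Q}$-vector space when $f$ is subintegral over $\mathbb{Q}$ — is the technical heart, and I would prove it by reducing to elementary subintegral extensions as in Theorem \ref{vanishing}(1) and computing $\mathcal{O}^{\times}(B)/\mathcal{O}^{\times}(A)$ directly, exploiting that $1+\mathfrak{n}$ for a nilpotent ideal $\mathfrak{n}$ in a $\mathbb{Q}$-algebra is uniquely $n$-divisible via the truncated logarithm/exponential.
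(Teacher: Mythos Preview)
Your proof of part (1) contains a genuine error: you claim $\Pic(f)=0$ for a subintegral extension $f:A\hookrightarrow B$ of $\mathbb{Q}$-algebras, but this is false. Take $A=\mathbb{Q}$ and $B=\mathbb{Q}[\epsilon]/(\epsilon^2)$; then $A^{\times}=\mathbb{Q}^{\times}$ while $B^{\times}\cong\mathbb{Q}^{\times}\times\mathbb{Q}$, so from (\ref{U-Pic}) one gets $\Pic(f)\cong\mathbb{Q}\neq 0$. (Your related claim that $\mathcal{O}^{\times}(S)\to\mathcal{O}^{\times}(X)$ is surjective is likewise false.) Consequently $H^0_{et}(S,\mathcal{I}_{et})\cong\Pic(f)$ is not zero in general, and your argument that all $H^i_{et}(S,\mathcal{I}_{et})$ vanish collapses at $i=0$. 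What the paper does instead is invoke the isomorphism $\Pic(f)\cong B/A$ (additive group) from \cite{rs}, \cite{rrs}, which exhibits $\Pic(f)$ as a $\mathbb{Q}$-vector space; hence multiplication by $n$ on $H^0_{et}(S,\mathcal{I}_{et})$ is \emph{bijective}, not zero. Combined with $H^i_{et}(S,\mathcal{I}_{et})=0$ for $i>0$ (cited directly from Proposition~5.4 of \cite{VS1}, rather than deduced from the $\mathbb{G}_m$ long exact sequence as you attempt), the sequence (\ref{kummer les2}) then gives $H^i_{et}(S,\mu_n^f)=0$.

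For part (2) your overall strategy --- use the short exact sequence (\ref{kummer ses}) and show $\Pic(f)\otimes\mathbb{Z}/n\mathbb{Z}=0$ --- is exactly the paper's, and you correctly identify the crux as showing $\Pic(f)$ is a $\mathbb{Q}$-vector space. But your route to this is flawed: you invoke Proposition~7 of \cite{is} to get surjectivity of $\Pic(S)\to\Pic(X)$, which (a) is a statement about ring extensions and does not apply to projective schemes (indeed Remark~\ref{not true for nonaffine} gives a counterexample), and (b) even if it held, surjectivity at the $\Pic$ end of (\ref{U-Pic}) does not yield $\Pic(f)\cong\mathcal{O}^{\times}(X)/\mathcal{O}^{\times}(S)$ --- you would need injectivity. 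The paper bypasses all of this with a one-line structural argument: Proposition~5.3 of \cite{VS1} gives $\Pic(f)\cong H^0_{zar}(S,f_*\mathcal{O}_X/\mathcal{O}_S)$, and since $f_*\mathcal{O}_X/\mathcal{O}_S$ is a coherent sheaf on a $\mathbb{Q}$-scheme, its global sections form a $\mathbb{Q}$-vector space. Your truncated-exponential intuition at the end is morally the local reason this works, but the coherent-sheaf formulation is what makes it go through globally without any unit-group gymnastics.
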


\begin{proof}
 (1) For a subintegral extension $f: A\hookrightarrow B$ of $\mathbb{Q}$-algebras, we have $\Pic({\it f) \cong B/A}$ by Theorem 5.6 of \cite{rs} and Theorem 2.3 of \cite{rrs}. This implies that $\Pic({\it f})$ is a $\mathbb{Q}$-vector space, hence it is a divisible group. Therefore, the map $\Pic({\it f}) \stackrel{{\it n}} \to \Pic({\it f})$ is bijective. For $i>0,$  $H_{et}^{i}(S, \mathcal{I}_{et})\cong H_{zar}^{i}(S, \mathcal{I}_{zar})=0$ by Proposition 5.4 of \cite{VS1}. Hence, $H_{et}^{i}(S, \mu_{n}^{f})=0$ for $i\geq 0$ by (\ref{kummer les2}).
 
 The other part is clear from (\ref{kummer les1}).
 
 (2) By Proposition 5.3 of \cite{VS1}, $\Pic({\it f})\cong {\it H}_{zar}^{0}({\it S}, \it{f_{*}\mathcal{O}_{X}/\mathcal{O}_{S}})$. Note that $\it{f_{*}\mathcal{O}_{X}/\mathcal{O}_{S}}$ is coherent. So, $\Pic({\it f})$ is a $\mathbb{Q}$-vector space. Hence the result by the short exact sequence (\ref{kummer ses}).  \end{proof}

\end{document}